\title[Analysis of heterogeneous catalysis models with fast sorption and surface chemistry]{Analysis of some heterogeneous catalysis models with fast sorption and fast surface chemistry}
\author{Bj\"orn Augner, Dieter Bothe}
\dedicatory{Dedicated to Matthias Hieber on the occasion of his $60^\text{th}$ birthday}
 \newtheorem{theorem}{Theorem}[section]
 \newtheorem{remark}[theorem]{Remark}
 \newtheorem{proposition}[theorem]{Proposition}
 \newtheorem{corollary}[theorem]{Corollary}
 \newtheorem{lemma}[theorem]{Lemma}
 \newtheorem{assumption}[theorem]{Assumption}
 \renewcommand{\vec}{\bd}
 \DeclareMathOperator{\R}{\mathbb{R}}
 \DeclareMathOperator{\N}{\mathbb{N}}
 \DeclareMathOperator{\C}{\mathbb{C}}
 \DeclareMathOperator{\Z}{\mathbb{Z}}
 \renewcommand{\S}{\mathbb{S}}
 \DeclareMathOperator{\dv}{\operatorname{div}}
 \DeclareMathOperator{\F}{\mathbb{F}}
 \DeclareMathOperator{\B}{\mathcal{B}}
 \DeclareMathOperator{\dom}{\mathrm{D}}
 \DeclareMathOperator{\ran}{\mathrm{R}}
 \renewcommand{\ker}{\mathrm{N}}
 \newcommand{\cip}[2]{\left( #1 \mid #2 \right)}
 \newcommand{\ip}[2]{#1 \cdot #2}
 \newcommand{\bd}[1]{\boldsymbol{#1}}
 \newcommand{\bb}[1]{\boldsymbol{#1}}
 \renewcommand{\Re}{\operatorname{Re}\,}
 \renewcommand{\Im}{\operatorname{Im}\,}
 \newcommand{\dd}{\, \mathrm{d}}
 \newcommand{\norm}[1]{\left\| #1 \right\|} 
 \newcommand{\abs}[1]{\left| #1 \right|}
 \newcommand{\diag}{\operatorname{diag}}
 \newcommand{\ii}{\mathrm{i}}
 \newcommand{\ee}{\mathrm{e}}
 \newcommand{\BB}{\mathrm{B}}
 \newcommand{\LL}{\mathrm{L}}
 \newcommand{\WW}{\mathrm{W}}
 \newcommand{\CC}{\mathrm{C}}
 \newcommand{\DD}{\mathrm{D}}
 \newcommand{\II}{\mathrm{I}}
\begin{document}
 \allowdisplaybreaks[1]

 \begin{abstract}
  We investigate limit models resulting from a dimensional analysis of quite general heterogeneous catalysis models with fast sorption (i.e.\ exchange of mass between the bulk phase and the catalytic surface of a reactor) and fast surface chemistry for a prototypical chemical reactor.
  For the resulting reaction-diffusion systems with linear boundary conditions on the normal mass fluxes, but at the same time nonlinear boundary conditions on the concentrations itself, we provide analytic properties such as local-in time well-posedness, positivity and global existence of strong solutions in the class $\WW^{(1,2)}_p(J \times \Omega; \R^N)$, and of classical H\"older solutions in the class $\CC^{(1+\alpha, 2 + 2\alpha)}(\overline J \times \overline{\Omega})$.
  Exploiting that the model is based on thermodynamic principles, we further show a priori bounds related to mass conservation and the entropy principle.
 \end{abstract}
 
 \keywords{Heterogeneous catalysis, dimension analysis, reaction diffusion systems, surface chemistry, surface diffusion, sorption, quasi-linear PDE, $\LL_p$-maximal regularity, positivity, blow-up.}

 \maketitle
 
 Version of \today.
 
\section{Introduction}
\label{sec:introduction}
In chemical engineering catalytic processes often play an important, if not predominant role: Certain chemical reactions taking place within a chemical reactor are supposed to be accelerated, whereas other, usually undesirable, side reactions should be suppressed.
This aim can be accomplished by adding substances which catalytically act in the fluid mixture (homogeneous catalysis), or e.g.\ by using suitable structures for the reactor surface (active surface) which may then act as a catalytic surface to accelerate the desirable reactions on the surface. In many cases such heterogeneous catalysis mechanism are actually more efficient by several orders of magnitude than homogeneous catalysts, and, moreover, one may often avoid the need for filtration technology to separate the desired product from the catalyst.
Heterogeneous catalysis mechanisms and sorption processes may be modelled starting from a continuum thermodynamic viewpoint by reaction-diffusion systems in the chemical reactor and on the active surface which are coupled via sorption processes, i.e.\ the exchange of mass between the boundary layer of the bulk phase and the active surface, cf.\ \cite{SoOrMaBo19}. In accord with their purpose, catalytic accelerated chemical reactions on the surface are very fast, i.e.\ both the surface chemistry (at least for the desired reactions) as well as sorption processes take place on very small time scales.
Hence, it is natural to consider limit models, for which the surface chemistry and sorption are taken to be infinitely fast, i.e.\ surface chemistry and sorption processes are modelled as if they would attain an equilibrium configuration instantaneously.
Using a dimensionless formulation of such coupled reaction-diffusion-sorption bulk-surface systems, several of such limit models have been proposed in \cite{AugBot19+}, including a general formulation of such a fast sorption and fast surface chemistry model.
The mathematical analysis of such systems there has been accomplished for the case of a three-component system with chemical reactions of type $A_1^\Sigma + A_2^\Sigma \rightleftharpoons A_3^\Sigma$ on the surface, neglecting any bulk chemistry (the latter being no strong obstacle, and for the construction of (uniquely determined) strong solutions not a highly relevant assumption). In the present manuscript, the mathematical analysis is continued for limit systems of the same structure, but for general bulk and surface chemistry.
In particular, the results on local-in-time existence of strong solutions, positivity, first blow-up criteria as well as a-priori estimates for the solutions will be extended to the  generic case.
\newline
The paper is organised as follows: In Section \ref{sec:notation} some basic notation is introduced and some preliminary results are recalled. Thereafter, in Section \ref{sec:model}, the class of heterogeneous catalysis models considered in this manuscript is introduced and the underlying modelling assumptions recalled from the article \cite{AugBot19+}. Section \ref{sec:lit-wellposedness} constitutes the core of this article, and is splitted into subsections on $\LL_p$-maximal regularity of a linearised version of the fast-sorption-fast-surface-chemistry model, on local-in-time existence of strong $\WW_p^{(1,2)}$-solutions, and on an abstract blow-up criterion as well as a-priori bounds, e.g.\ entropy estimates, on the strong solution. The appendix then serves to sketch how to generalise the main results of \cite{DeHiPr03} and \cite{DeHiPr07} to the slightly more general setup considered here, where the system splits into parts on which Dirichlet type conditions, i.e.\ boundary conditions prescribed by differential operators of order zero, are imposed, and parts on which a no-flux boundary condition, i.e.\ given via a differential operator of order one, is used.
There is a vast amount of literature on reaction-diffusion-systems or general parabolic systems in the bulk phase with homogeneous or inhomogeneous, linear or nonlinear boundary conditions, e.g.\ \cite{LaSoUr68}, \cite{Pie10} for a start, and quite recently \emph{thermodynamic principles} have become a resourceful driving force for \emph{entropy methods}, e.g.\ \cite{DesFel06}, \cite{DesFelTan17}, \cite{DruJun19+}.
Astonishingly, however, up to now (at least to our knowledge) combined type boundary conditions, i.e.\ systems where at a fixed boundary point $\vec z \in \partial \Omega$ Dirichlet type boundary conditions are imposed on some of the variables (or, a linear combination thereof), whereas the remaining variables satisfy Neumann type boundary conditions, have not been considered in the literature yet.
Instead, other types of generalisations have been considered so far:
Several authors, e.g.\ \cite{AcqTer89}, \cite{Lu95}, \cite{Ter89}, did indeed consider parabolic systems with nonlinear boundary conditions, but these are always assumed to be of a common order, cf., e.g., the \emph{non-tangentiality condition} in \cite{Lu95}.
In, e.g., \cite{DePrZa08}, \cite{BoKoMaSa17} and \cite{Sn17} general parabolic systems or reaction-diffusion-systems with dynamic boundary conditions have been considered, i.e.\ typically two parabolic systems in the bulk phase and on the surface are coupled.
In \cite{DeKa13}, on the other hand, the authors consider more general structures leading to parabolic systems based on the notion of a \emph{Newton polygon}.

 \section{Notation and preliminaries}
 \label{sec:notation}
 Throughout, all Banach spaces appearing are Banach spaces over $\F$, the field of real numbers $\R$ or complex numbers $\C$, and $\abs{z}$ denotes the modulus of a real or complex number $z$, $\Re z$ its real and $\Im z$ its imaginary part.
 Real or complex vectors (or, vector fields) $\vec v = (v_1, \ldots, v_N)^\mathsf{T} \in \F^N$ will be typically denoted by small, Roman letters in boldface and have Euclidean norm $\abs{\vec v} = \sqrt{\sum_{i=1}^N \abs{v_i}^2}$, whereas matrices $\bb M = [m_{ij}]_{i,j} \in \R^{n \times m}$ (or $\C^{n \times m}$) most of the time are written in Roman capitals and boldface.
 The set of natural or entire numbers $k$ are denoted by $\N = \{1, 2, \ldots, \}$ (or $\N_0 = \{0, 1, 2, \ldots \}$) and $\Z = \{\ldots, -1, 0, 1, \ldots\}$, respectively, and vectors of entire numbers $\vec \alpha = (\alpha_1, \ldots, \alpha_N)^\mathsf{T} \in \Z^N$ by small Greek letters in boldface, and with length $\abs{\vec \alpha} = \sum_{i=1}^N \abs{\alpha_i}$, but $\bb \nu = [\nu_{i,j}]_{i,j} \in \Z^{N \times M}$ may also denote entire numbers-valued matrices.
 \newline
 $\Omega \subseteq \R^n$ typically denotes an open and nonempty subset of $\R^n$, $\overline{\Omega}$ its closure and $\partial \Omega$ its boundary, and $J \subseteq \R$ any interval.
 Function spaces that are frequently used are $\CC(\Omega)$ and $\CC(\overline{\Omega})$ (continuous functions on $\Omega$ and $\overline{\Omega}$, resp.), $\CC^k(\Omega)$ and $\CC^k(\overline{\Omega})$ ($k \in \N_0$ times continuous differentiable functions on $\Omega$ and $\overline{\Omega}$, resp.), $\CC^{k+\gamma}(\overline{\Omega})$ ($k \in \N_0$ times continuously differentiable functions with bounded and $\gamma \in (0,1]$ H\"older continuous derivatives of order $k$), $\LL_p(\Omega)$ (Lebesgue spaces of order $p \in [1, \infty]$, where as usual function classes are identified with its representatives), $\WW_p^k(\Omega)$ (Sobolev spaces of differentiability order $k \in \N_0$ and integrability order $p \in [1, \infty)$), $\WW_p^s(\Omega)$ (Sobolev--Slobodetskii spaces, $s \in \R_+$, $p \in [1, \infty]$), $\BB_{pq}^s$ (Besov spaces for parameters $s \in \R_+$, $p, q \in [1, \infty]$).
 Similarly, one also has their corresponding boundary (for sufficiently regular boundary), Banach space $E$-valued and anisotropic versions, e.g.\ $\LL_p(\Omega; E)$ ($E$-valued Lebesgue spaces), $\LL_p(\partial \Omega)$ (Lebesgue spaces w.r.t.\ surface measure) and
  \begin{align*}
   \CC^{(1,2m) \cdot \alpha}(\overline{J} \times \overline{\Omega})
    &= \CC^\alpha(\overline{J}; \CC^0(\overline{\Omega})) \cap \LL^\infty(\overline{J}; \CC^{2m\alpha}(\overline{\Omega})),
    &&m \in \N, \, \alpha \in \R_+,
    \\
   \WW_p^{(1,2m) \cdot s}(J \times \Omega)
    &= \WW_p^s(J; L_p(\Omega)) \cap \LL_p(J; W_p^{2m\alpha}(\Omega)),
    &&m \in \N, \, s \in \R_+
  \end{align*}
 etc.
 \begin{remark}[Sobolev-Slobodetskii spaces and Besov spaces]
  Recall that for sufficiently regular domains $\Omega \subseteq \R^n$, one has $\BB_{pp}^s(\Omega) = \WW_p^s(\Omega)$ for $s \in \R_+ \setminus \N_0$, but $\BB_p^k(\Omega) \neq \WW_p^k(\Omega)$ for $k \in \N$.
 \end{remark}
 For the definitions, basic properties and more information on these spaces, the interested reader is referred to the vast literature, e.g.\ \cite{LiLo01}, \cite{AdaFou03} and \cite{Ama83}.
 \newline
 Most importantly for our purposes, the following embedding results hold true.
  
  \begin{theorem}[Sobolev--Morrey embedding theorems]
   Let $\Omega \subseteq \R^n$ be a bounded domain, and parameters $s \geq r \in \R_+$, $p, q \in [1, \infty]$, $\alpha \in [0,1)$ and $m \in \N$ be given. Then, for sufficiently regular boundary $\partial \Omega$ of $\Omega$, e.g.\ $\partial \Omega \in \CC^\infty$, the following embeddings are continuous
    \begin{align*}
     \WW_p^s(\Omega)
      &\hookrightarrow \WW_q^r(\Omega),
      &&\text{if}
      &&s - \frac{n}{p} \geq r - \frac{n}{q} \not\in \Z,
      \text{ or } k - \frac{n}{p} > l - \frac{n}{q},
      \\
     \WW_p^s(\Omega)
      &\hookrightarrow \CC^{l+\alpha}(\overline{\Omega}),
      &&\text{if}
      &&s - \frac{n}{p} \geq r + \alpha \not\in \Z,
      \text{ or } s - \frac{n}{p} > r + \alpha,
      \\
     \WW_p^{(1,2m) \cdot s}(J \times \Omega)
      &\hookrightarrow \WW_q^{(1,2m) \cdot r}(J \times \Omega),
      &&\text{if}
      &&2m - \frac{s}{n+2} \geq 2mr - \frac{r}{n+2} \neq \Z,
      \text{ or } 2m - \frac{s}{n+2} > 2mr - \frac{r}{n+2},
      \\
     \WW_p^{(1,2m) \cdot s}(J \times \Omega)
      &\hookrightarrow \CC^{(1,2m) \cdot \frac{l + \alpha}{2m}}(J \times \Omega),
      &&\text{if}
      &&2m - \frac{s}{n+2} \geq l + \alpha \neq \Z,
      \text{ or } 2m - \frac{s}{n+2} > l + \alpha,
      \\
     \WW_p^s(\Sigma)
      &\hookrightarrow \WW_q^r(\Sigma),
      &&\text{if}
      &&s - \frac{n-1}{p} > r - \frac{n-1}{q},
      \\
     \WW_p^s(\Sigma)
      &\hookrightarrow \CC^{l+\alpha}(\Sigma),
      &&\text{if}
      &&s - \frac{n-1}{p} > l + \alpha,
      \\
     \WW_p^{(1,2m) \cdot s}(J \times \Sigma)
      &\hookrightarrow \WW_q^{(1,2m) \cdot r}(J \times \Sigma),
      &&\text{if}
      &&2m - \frac{s}{n+1} > 2mr - \frac{r}{n+1},
      \\
     \WW_p^{(1,2m) \cdot s}(J \times \Sigma)
      &\hookrightarrow \CC^{(1,2m) \cdot \frac{l + \alpha}{2m}}(J \times \Sigma),
      &&\text{if}
      &&2m - \frac{s}{n+1} > l + \alpha.
    \end{align*}
  \end{theorem}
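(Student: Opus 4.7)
The plan is to reduce each of the eight embeddings to the corresponding well-known results on the whole space $\R^n$ (or $\R \times \R^n$ in the anisotropic case). Since $\partial \Omega$ is assumed sufficiently regular, one has at disposal a Stein-type universal extension operator $E \colon \WW_p^s(\Omega) \to \WW_p^s(\R^n)$, bounded simultaneously in all parameters $(s,p)$ of interest, together with its anisotropic space-time analogue $E_{\mathrm{ani}} \colon \WW_p^{(1,2m) \cdot s}(J \times \Omega) \to \WW_p^{(1,2m) \cdot s}(\R \times \R^n)$ respecting the parabolic scaling. After such extension it suffices to prove each embedding on the full space, where translation invariance makes Fourier- and Bessel-potential techniques available.

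First I would treat the purely spatial isotropic cases. For $\WW_p^s(\Omega) \hookrightarrow \WW_q^r(\Omega)$ the full-space version follows from the representation of $\WW_p^s$ via Bessel potentials combined with the Hardy--Littlewood--Sobolev inequality, plus real interpolation between integer exponents to capture fractional $s,r$. For the Hölder target $\WW_p^s(\Omega) \hookrightarrow \CC^{l+\alpha}(\overline{\Omega})$, after differentiating $l$ times the remaining embedding $\WW_p^{s-l}(\R^n) \hookrightarrow \CC^{\alpha}(\R^n)$ is the classical Morrey inequality when $s - l - n/p > \alpha$, while the borderline equality is handled by passing through the finer Besov scale and using $\BB_{p,1}^\sigma(\R^n) \hookrightarrow \BUC(\R^n)$ together with $\WW_p^s = \BB_{p,p}^s$ for $s \notin \N_0$.

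Next I would adapt these arguments to the anisotropic spaces. Using the identity $\WW_p^{(1,2m) \cdot s}(J \times \Omega) = \WW_p^s(J;\LL_p(\Omega)) \cap \LL_p(J;\WW_p^{2ms}(\Omega))$ one reduces to embeddings on $\R^{1+n}$ equipped with the parabolic distance, for which the effective Sobolev dimension is $n + 2m$ rather than $n + 1$ (explaining the appearance of $n+2$ in the second-order statement). The desired inequalities then follow from Triebel's theory of anisotropic Besov and Bessel-potential spaces, or equivalently from the vector-valued Hardy--Littlewood--Sobolev inequality applied to the anisotropic Bessel representation; the Hölder target is obtained in the same way as in the isotropic case via an anisotropic Morrey inequality on $\R \times \R^n$.

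Finally, the embeddings on the surface $\Sigma$ reduce to the $(n-1)$-dimensional versions of the bulk estimates by a standard localisation argument: choose a finite $\CC^\infty$-atlas flattening $\Sigma$ into pieces of $\R^{n-1}$ and a subordinate partition of unity, which renders the norm on $\WW_p^s(\Sigma)$ equivalent to the sum of the norms of the local pullbacks, and likewise on $J \times \Sigma$; then apply the already established full-space results in dimension $n-1$. The main technical obstacle, in my view, is precisely the borderline case where equality is attained in the scaling condition and the image exponent is integer: here one must leave the $\WW_p^s$ scale, argue on $\BB_{p,q}^s$, and keep careful track of the inequality $\BB_{p,p}^k \neq \WW_p^k$ for $k \in \N$ noted in the preceding remark; everything else is routine bookkeeping once the full-space machinery and the extension operators are in place.
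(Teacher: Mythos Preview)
The paper does not supply a proof of this theorem at all: it is stated in Section~\ref{sec:notation} as a preliminary result, with the reader referred to the standard references \cite{LiLo01}, \cite{AdaFou03}, \cite{Ama83} for definitions and properties of the function spaces involved, and the next line simply reads ``These embeddings will prove quite useful later on.'' So there is no in-paper argument to compare against.

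Your sketch is a reasonable outline of how these classical embeddings are established in the literature you would be citing anyway: extension to the full space via a Stein-type operator, Bessel-potential/Hardy--Littlewood--Sobolev arguments for the isotropic Sobolev scale, Morrey's inequality and the Besov refinement $\BB_{p,1}^\sigma \hookrightarrow \BUC$ for the H\"older targets, the anisotropic theory (e.g.\ as in Amann's monograph \cite{Ama83}) for the parabolic spaces with effective dimension $n+2m$, and chart-wise localisation for $\Sigma$. None of this is controversial. The only caveat is that the theorem as printed contains several apparent typographical slips in the scaling conditions (for instance $k - n/p > l - n/q$ in the first line should presumably read $s - n/p > r - n/q$, and the anisotropic conditions with $2m - s/(n+2)$ look garbled relative to the intended parabolic Sobolev index); your proof plan implicitly targets the corrected, standard versions of these inequalities, which is the right thing to do, but you should flag the discrepancies rather than silently repair them.
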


 These embeddings will prove quite useful later on.
 
 \section{The model}
 \label{sec:model}
 In this paper, the following, rather general fast sorption, fast surface chemistry limit model will be considered:
  \begin{equation}
   \begin{cases}
    \partial_t \vec c + \dv \bb J
     = \vec r(\vec c)
     &\text{in } (0, \infty) \times \Omega,
     \\
    \vec e^k \cdot (\bb J \cdot \vec n)
     = 0
     &\text{on } (0, \infty) \times \Sigma, \, k = 1, \ldots, N - m^\Sigma =: n^\Sigma
     \\
    \exp(\vec \nu^{\Sigma,a} \cdot \vec \mu^{\vec \nu^{\Sigma,a}})
     = 1
     &\text{on } (0, \infty) \times \Sigma, \, a = 1, \ldots, m^\Sigma
   \end{cases}
   \tag{GFLM}
   \label{GFLM}
  \end{equation}
 where the appearing variables and coefficients have the following physical interpretation and relations with each other. \newline
 \emph{Thermodynamic and geometric variables and vectors:}
  \begin{itemize}
   \item
    $\vec c = (c_1, \ldots, c_N)^\mathsf{T}: \R \times \overline{\Omega} \rightarrow \R^N$ denotes the vector field of \emph{molar concentrations}, i.e.\ $c_i(t,\vec z) \in \R$ is the molar concentration of the chemical substance $A_i$ at time $t \in \R$ in position $\vec z \in \overline{\Omega}$, for $i = 1, \ldots, N$;
   \item
    $\bb J = [\vec j_1 \cdots \vec j_N]: \R \times \overline{\Omega} \rightarrow \R^{n \times N}$ for $\vec j_i: \R \times \overline{\Omega} \rightarrow \R^n$ the vector field of \emph{individual mass fluxes} of species $A_i$, $i = 1, \ldots, N$;
   \item
    $\vec n: \partial \Omega \rightarrow \R^n$, the \emph{outer normal vector field} to $\Omega$ on $\partial \Omega$;
   \item
    $\vec r(\vec c) = \sum_{a=1}^m R_a(\vec c) \vec \nu^a$, the vector field of \emph{total molar reaction rates} in the bulk phase, modelling chemical reactions given by the formal (reversible) chemical reaction equations
     \[
      \sum_{i=1}^N \alpha_i^a A_i
       \rightleftharpoons \sum_{i=1}^N \beta_i^a A_i,
       \quad a = 1, \ldots, m,
     \]
    where $\vec \alpha^a = (\alpha_1^a, \ldots, \alpha_N^a)^\mathsf{T}$, $\vec \beta^a = (\beta_1^a, \ldots, \beta_N^a)^\mathsf{T} \in \N_0^N$ and the \emph{stoichiometric vector} of the $a$-th reaction is given by $\vec \nu^a = \vec \beta^a - \vec \alpha^a \in \Z^N$.
    Moreover, $R_a(\vec c)$ denotes the effective reaction rate for the $a$-th chemical reaction in the bulk phase;
   \item
    $\vec \nu^{\Sigma,a} = \vec \beta^{\Sigma,a} - \vec \alpha^{\Sigma,a} \in \Z^N$, $a = 1, \ldots, m^\Sigma$, are the stoichiometric vectors of the \emph{surface chemical reactions}
     \[
      \sum_{i=1}^N \alpha_i^{\Sigma,a} A_i^\Sigma
       \leftrightharpoons \sum_{i=1}^N \beta_i^{\Sigma,a} A_i^\Sigma,
       \quad a = 1, \ldots, m^\Sigma,
     \] 
    where $\vec \alpha^{\Sigma,a} = (\alpha_1^{\Sigma,a}, \ldots, \alpha_N^{\Sigma,a})^\mathsf{T}$, $\vec \beta^{\Sigma,a} = (\beta_1^{\Sigma,a}, \ldots, \beta_N^{\Sigma,a})^\mathsf{T} \in \N_0^N$ for the adsorbed versions $A_i^\Sigma$ of species $A_i$.
  \end{itemize}
 \emph{Modelling assumptions:}
  \begin{itemize}
   \item
    The concentrations are assumed to be very small (cp.\ to a reference concentration $c_\mathrm{ref}$ of some solute which is not included in the model, $0 \leq \nicefrac{c_i(t,\vec z)}{c_\mathrm{ref}} \ll 1$, \emph{dilute mixture}), and the fluid in the bulk is at rest, so that Fickian diffusion,
     \[
      \vec j_i
       = - d_i \nabla c_i,
       \quad
       \text{for some \emph{diffusion coefficients} } d_i > 0, 
       \, i = 1, \ldots, N
     \]
    is a reasonable (though, not thermodynamically consistent) model for the diffusive fluxes;
   \item
    the chemical potentials $\mu_i$ in the bulk phase are modelled as an ideal mixture with
     \[
      \mu_i(\vec c, \vartheta)
       = \mu_i^0(\vartheta) + \ln x_i,
       \quad
       i = 1, \ldots, N,
     \]
    where $\mu_i^0(\vartheta)$ corresponds to some temperature-dependent equilibrium configuration and $x_i = \frac{c_i}{c}$ the scalar field of \emph{molar fractions}, where $c = \sum_{i=1}^{N+1} c_i$ is the total concentration density in the bulk phase, including the concentration of some solvent $A_{N+1}$.
    Instead of including the solvent $A_{N+1}$ in the model, we replace $c$ by some constant approximation $c_\mathrm{ref}$ to the actual total concentration $c$, so that we may consider the vector $\vec x = (x_1, \ldots, x_N)^\mathsf{T} = \nicefrac{\vec c}{c_\mathrm{ref}}$ and its dynamics instead of $\vec c$.
    Formally assuming $c_\mathrm{ref} = 1$, we then have $\mu_i(\vec c, \vartheta) = \mu_i^0(\vartheta) + \ln c_i$.
    Additionally, an \emph{isothermal} system is assumed, hence $\mu_i^0(\vartheta) = \mu_i^0 \in \R$ is simply a constant;
   \item
    the \emph{reaction velocity} $R_a(\vec c)$ of the $a$-th reaction is modelled (consistent with the entropy law) as $R_a(\vec c) = k_a^f \vec c^{\vec \alpha^a} - k_a^b \vec c^{\vec \beta^a}$ with $k_a^f, k_a^b > 0$ satisfying the relation
     \[
      \frac{k_a^f}{k_a^b}
       = \exp (\vec \nu^a \cdot \vec \mu^0),
     \]
    for $\vec \mu_0 = (\mu_i^0)_i$.
      \item
    Throughout, we assume that all equilibria of the surface chemistry are \emph{detailed-balanced}, i.e.\
    \[
     \vec \nu^{\Sigma,1}, \ldots, \vec \nu^{\Sigma,m^\Sigma} \quad \text{are linearly independent.}
    \]
   Then
    $\vec e^k \in \R^N$, $k = 1, \ldots, n^\Sigma := N - m^\Sigma$, denotes a set of linearly independent \emph{conserved quantities} under the surface chemistry, spanning the orthogonal complement of the surface chemistry stoichiometric vectors $\{\vec \nu^{\Sigma,a}: a = 1, \ldots, m^\Sigma\}$.
  \end{itemize}
  
 Under these assumptions, and the additional assumption that the sorption processes and surface chemistry take place very fast, i.e.\ on much smaller time scales than the bulk diffusion and the bulk chemistry, it has been demonstrated in \cite{AugBot19+} that \eqref{GFLM} is a reasonable limit model for the limiting case of infinitely fast surface chemistry and sorption processes (actually, independent of whether one of these two fast thermodynamic mechanism is even ultra-fast), and the condensed form of the limit model (including initial data) reads
  \begin{align}
   \partial_t c_i - d_i \Delta c_i
    &= r_i(\vec c),
    &&i =1, \ldots, N, \, \text{in } (0, \infty) \times \Omega,
    \label{eqn:condensed_form_bulk-phase}
    \\
  \vec e^k \cdot \bb D \partial_{\vec n} \vec c
    &= 0,
    &&k = 1, \ldots, n^\Sigma, \, \text{on } (0, \infty) \times \Sigma,
    \label{eqn:condensed_flux_bc}
    \\
   k_a^f \prod_{i=1} c_i^{\alpha_i^{\Sigma,a}}
    &= k_a^b \prod_{i=1}^N c_i^{\beta_i^{\Sigma,a}},
    &&a = 1, \ldots, m^\Sigma, \, \text{on } (0, \infty) \times \Sigma,
    \label{eqn:condensed_form_chemistry_bc}
    \\
   \vec c(0,\cdot)
    &= \vec c^0,
    &&\text{in } (0, \infty) \times \Omega
    \label{eqn:condensed_form_initial_data}
  \end{align}
 where $\Sigma = \partial \Omega$ denotes boundary of $\Omega$, acting as an active surface, $\bb D = \diag (d_1, \ldots, d_N)$ is the diagonal matrix of (Fickian) diffusion coefficients, and some initial data $\vec c^0: \overline{\Omega} \rightarrow \R^N$ are given.
 
 \section{Local-in time well-posedness for arbitrary bulk and surface chemistry}
 \label{sec:lit-wellposedness}
 
 This section is devoted to the local-in time well-posedness analysis for generic fast-sorption--fast-surface-chemistry limit models of the form \eqref{eqn:condensed_form_bulk-phase}--\eqref{eqn:condensed_form_initial_data}.
 
 Since, by the detailed-balance assumption on the surface chemistry, the stoichiometric vectors $\nu^\Sigma_1, \ldots, \vec \nu^\Sigma_{m^\Sigma}$ are linearly independent, this system is equivalent to the system 
  \begin{align*}
   \partial_t \vec c - \bb D \Delta \vec c
    &= \sum_a k_a \left( \exp(\vec \alpha^a \cdot \vec \mu) - \exp(\vec \beta^a \cdot \vec \mu) \right) \vec \nu^a
    &&\text{in } (0,\infty) \times \Omega,
    \\
   \ip{\vec e^k}{\bb D \partial_{\vec n} \vec c}
    &= 0,
    &&k = 1, \ldots, n^\Sigma, \, \text{on } (0,\infty) \times \Sigma,
    \\
   \exp(\vec \alpha^{\Sigma,a} \vec \mu) - \exp(\vec \beta^{\Sigma,a} \cdot \vec \mu)
    &= 0,
    &&a = 1, \ldots, m^\Sigma, \, \text{on } (0,\infty) \times \Sigma.
  \intertext{ Multiplying the latter equation by $\exp(- \vec \alpha^{\Sigma,a} \cdot \vec \mu|_\Sigma)$ gives the formulation}
   \partial_t \vec c - \bb D \Delta \vec c
    &= \sum_a k_a \left( \exp(\vec \alpha^a \cdot \vec \mu) - \exp(\vec \beta^a \cdot \vec \mu) \right) \vec \nu^a
    &&\text{in } (0,\infty) \times \Omega,
    \\
   \ip{\vec e^k}{\bb D \partial_{\vec n} \vec c}
    &= 0,
    &&k = 1, \ldots, n^\Sigma, \, \text{on } (0,\infty) \times \Sigma,
    \\
   1 - \exp(\vec \nu^{\Sigma,a} \cdot \vec \mu)
    &= 0,
    &&a = 1, \ldots, m^\Sigma, \, \text{on } (0,\infty) \times \Sigma.
  \end{align*}
 Here, we insert the special choice $\mu_i = \mu_i^0 + \ln c_i$ for the bulk chemical potentials to obtain the system
  \begin{align}
   \partial_t \vec c - \bb D \Delta \vec c
    &= \sum_a k_a \left( \exp(\vec \alpha^a \cdot \vec \mu) - \exp(\vec \beta^a \cdot \vec \mu) \right) \vec \nu^a
    &&\text{in } (0,\infty) \times \Omega,
    \nonumber \\
   \ip{\vec e^k}{\bb D \partial_{\vec n} \vec c}
    &= 0,
    &&k = 1, \ldots, n^\Sigma, \, \text{on } (0,\infty) \times \Sigma,
    \label{eqn:FSFSC-limit}
    \\
   \vec c^{\vec \nu^{\Sigma,a}}
    &= \kappa_a,
    &&a = 1, \ldots, m^\Sigma, \, \text{on } (0,\infty) \times \Sigma,
    \nonumber
  \end{align}
  for the equilibrium constant $\kappa_a = \exp(- \vec \nu^{\Sigma,a} \cdot \vec \mu^0)$.
 A possible linearised (around some $\vec c^\ast: \overline{\Omega} \rightarrow (0, \infty)$) version of this nonlinear system is obtained by taking the partial derivatives $\frac{\partial}{\partial c_i^\Sigma} \vec c|_\Sigma^{\vec \nu^{\Sigma,a}} = \nu_i^{\Sigma} \frac{1}{c_i|_\Sigma} \vec c|_\Sigma^{\vec \nu^{\Sigma,a}}$ for $c_i|_\Sigma \neq 0$, and reads
  \begin{align*}
   \partial_t \vec v - \bb D \Delta \vec v
    &= \vec f
    &&\text{in } (0,\infty) \times \Omega,
    \\
   \ip{\vec e^k}{\bb D \partial_{\vec n} \vec v}
    &= g_k,
    &&k = 1, \ldots, n^\Sigma, \, \text{on } (0,\infty) \times \Sigma,
    \\
   \sum_{i=1}^N \nu_i^{\Sigma,a} \frac{v_i}{c_i^\ast} (\vec c^\ast)^{\vec \nu^{\Sigma,a}}
    &= h_a,
    &&a = 1, \ldots, m^\Sigma, \, \text{on } (0,\infty) \times \Sigma.
  \end{align*}
 or, for short
  \begin{align*}
   \partial_t \vec v - \bb D \Delta \vec v
    &= \vec f,
    &&\text{in } (0,\infty) \times \Omega,
    \\
   \ip{\vec e^k}{\bb D \partial_{\vec n} \vec v|_\Sigma}
    &= g_k,
    &&k = 1, \ldots, n^\Sigma, \, \text{on } (0,\infty) \times \Sigma,
    \label{eqn:LP}
    \tag{LP} \\
   \bb C_a \vec \nu^{\Sigma,a} \cdot \vec v|_\Sigma
    &= h_a,
    &&a = 1, \ldots, m^\Sigma, \, \text{on } (0,\infty) \times \Sigma.
  \end{align*}
 where $\bb C_a = (\vec c^\ast|_\Sigma)^{\vec \nu^{\Sigma,a}} \diag(c_i^\ast|_\Sigma^{-1})_{i=1}^N: \, \Sigma \rightarrow \R^{N \times N}$.
  \begin{remark}
   Since only concentrations $c_i, c_i^\Sigma \geq 0$ have physical significance, only linearisations around states for which all components are (uniformly) strictly positive, i.e.\ only uniformly strict positive initial data, will be feasible by this approach towards linearisation. Regularisation effects for reaction-diffusion systems (cf.\ the strict parabolic maximum principle), however, suggest that this is no severe restriction as (under slight structural assumptions on the reaction network) for any initial data $c_i^0 \geq 0$, but not identically zero, the solution immediately becomes strictly positive, cf.\ the strict maximum principle for reaction-diffusion equations.
  \end{remark}
 The program of the remainder of this section is the following:
  \begin{itemize}
   \item
    Show $\LL_p$-maximal regularity of the linearised problem, provided sufficient regularity of the reference function $\vec c^\ast$.
    This can be done based on abstract theory in a slightly extended version of the results in \cite{DeHiPr03} and \cite{DeHiPr07}, so that mainly the validity of the Lopatinskii--Shapiro condition and regularity properties have to be checked.
   \item
    Use $\LL_p$-maximal regularity of the linearised problem and the contraction mapping principle to establish local-in-time existence for the fast-sorption--fast-surface-chemistry limit, provided the initial data are regular enough, have uniformly strictly positive entries and satisfy suitable compatibility conditions.
   \item
    Moreover, this procedure will give a "natural" blow-up criterion for global-in time existence, where by "natural" it is meant that this norm appears in the contraction mapping argument for the local-in time existence.
  \end{itemize}
  
 \subsection{$\LL_p$-maximal regularity for the linearised problem}
 \label{subsec:maximal_regularity}
 To show that the linearised problem \eqref{eqn:LP} possesses the property of $\LL_p$-maximal regularity, let us first consider the case of constant coefficients (i.e.\ a constant reference function $\vec c^\ast \in (0, \infty)^N$) and a flat boundary (i.e.\ consider the special case of a boundary $\Sigma = \R^{n-1} \times \{0\}$ for the half-space domain $\Omega = \R^{n-1} \times (0, \infty)$).
 The corresponding linear initial-boundary value problem to be investigated,  on the half-space then takes the general form of a parabolic reaction-diffusion system with boundary conditions of inhomogeneous type.
 For technical reasons (unboundedness of the domain $\Omega$) shifted by a damping constant $\mu \geq 0$, hence, it reads as
  \begin{align}
   \partial_t \vec v - \bb D \Delta \vec v + \mu \vec v
    &= \vec f,
    &&(t, \vec z) \in J \times \Omega = (0, \infty) \times \R^{n-1} \times (0,\infty)
    \nonumber \\
   \ip{\vec e^k}{\bb D \frac{\partial}{\partial z_n} \vec v}
    &= g_k,
    &&k = 1, \ldots, n^\Sigma, \, (t, \vec z') \in J \times \Sigma = (0, \infty) \times \R^{n-1} \times \{0\}
    \label{eqn:half-space_constant_coeff}
    \\
   \bb C \vec \nu^{\Sigma,a} \cdot \vec v
    &= h_a,
    &&a = 1, \ldots, m^\Sigma, \, (t,\vec z') \in J \times \Sigma
    \nonumber \\
   \vec v(0,\cdot)
    &= \vec v^0,
    &&\vec z \in \R^{n-1} \times (0, \infty)
    \nonumber
  \end{align}
 where one writes $\vec z = (\vec z', z_n) \in \R^{n-1} \times \R_+$ for the spatial variables,  and the right hand sides -- as the analysis of the linearised problem will reveal -- have to satisfy the following regularity conditions:
  \begin{enumerate}
   \item
    $\vec f \in \LL_p(J \times \Omega;\R^N)$;
   \item
    $\vec g \in \WW_p^{(1,2) \cdot (\nicefrac{1}{2}-\nicefrac{1}{2p})} (J \times \Sigma; \R^{n^\Sigma})$;
   \item
    $\vec h \in \WW_p^{(1,2) \cdot (1 - \nicefrac{1}{2p})}(J \times \Sigma; \R^{m^\Sigma})$;
   \item
    $\vec v^0 \in \BB_{pp}^{2-2/p}(\Omega; \R^N)$.
  \end{enumerate}
  
  The corresponding maximal regularity result on the half-space reads as follows:
  
  \begin{proposition}[$\LL_p$-maximal regularity on the half-space for constant coefficients]
   Assume that $\vec c^\ast \in (0, \infty)^N$ is a constant and let $p \in (1, \infty)$.
   Then there is $\mu_0 \geq 0$ such that for all $\mu \geq \mu_0$, the half-space problem \eqref{eqn:half-space_constant_coeff} admits a unique solution $\vec v \in \WW_p^{(1,2)}(J \times \Omega; \R^N)$ if and only if $\vec f \in \LL_p(J \times \Omega; \R^N)$, $\vec g \in \WW_p^{(1,2) \cdot (\nicefrac{1}{2}-\nicefrac{1}{2p})}(J \times \Sigma; \R^{n^\Sigma})$, $\vec h \in \WW_p^{(1,2) \times (1 - \nicefrac{1}{2p})}(J \times \Sigma; \R^{m^\Sigma})$ and $\vec v^0 \in \BB_{pp}^{2-2/p}(\Omega; \R^N)$, and, moreover, the following compatibility conditions are satisfied, if the respective time traces exist:
    \begin{align*}
   \ip{\vec e^k}{\bb D \frac{\partial}{\partial_{z_n}} \vec v^0}
    &= g_k|_{t=0},
    &&k = 1, \ldots, n^\Sigma, \, \vec z' \in \R^{n-1}
    &&(\text{if } p > n + 2),
    \nonumber \\
   \bb C_a \vec \nu^{\Sigma,a} \cdot \vec v^0
    &= h_a|_{t=0},
    &&a = 1, \ldots, m^\Sigma, \, \vec z' \in \R^{n-1}
    &&(\text{if } p > \frac{n + 2}{2}).
    \end{align*}
    In this case, there is $C = C(p, \mu) > 0$, independent of the boundary and initial data, such that
     \[
      \norm{\vec v}_{\WW_p^{(1,2)}}
       \leq C(p,\mu) \left( \norm{\vec f}_{\LL_p} + \norm{\vec g}_{\WW_p^{(1,2) \cdot (\nicefrac{1}{2} - \nicefrac{1}{2p})}} + \norm{\vec h}_{\WW_p^{1 - \nicefrac{1}{2p}}} + \norm{\vec v^0}_{\BB_{pp}^{2-2/p}} \right).
     \]
  \end{proposition}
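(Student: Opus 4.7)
The plan is to apply the abstract $\LL_p$-maximal regularity theorem for parabolic systems with mixed-order boundary conditions on the half-space, in the slightly extended form of \cite{DeHiPr03}, \cite{DeHiPr07} sketched in the appendix. Parabolicity of the bulk operator $\partial_t - \bb D \Delta + \mu$ is immediate since $\bb D$ is diagonal with strictly positive entries, so the main work reduces to verifying the Lopatinskii--Shapiro condition at the flat boundary $\Sigma = \R^{n-1} \times \{0\}$. The regularity classes of the data and the compatibility conditions at $t = 0$ then follow from the abstract theorem together with the anisotropic trace theory on $\WW_p^{(1,2)}(J \times \Omega)$; in particular, the thresholds $p > (n+2)/2$ and $p > n+2$ encode precisely when the zeroth- and first-order boundary traces of $\vec v^0 \in \BB_{pp}^{2-2/p}(\Omega; \R^N)$ exist continuously and therefore must match the respective boundary data at time $t=0$.

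To verify the Lopatinskii--Shapiro condition, I would perform a tangential Fourier transform $\vec z' \mapsto \xi' \in \R^{n-1}$ combined with a Laplace transform $t \mapsto \lambda \in \overline{\C_+}$, for fixed $(\lambda, \xi')$ with $\lambda + \mu + \abs{\xi'}^2 \neq 0$. Since the interior operator is diagonal, the homogeneous problem decouples into $N$ scalar ODEs in $z_n > 0$ whose bounded solutions are $\hat v_i(z_n) = A_i \ee^{-\omega_i z_n}$ with $\omega_i = \sqrt{(\lambda + \mu + d_i \abs{\xi'}^2)/d_i}$ and $\Re \omega_i > 0$. Substituting into the boundary conditions reduces the Lopatinskii--Shapiro condition to the invertibility, for arbitrary data $(\hat g, \hat h) \in \C^{n^\Sigma} \times \C^{m^\Sigma}$, of the $N \times N$ linear system
\[
 -\sum_{i=1}^N e_i^k \, d_i \omega_i A_i = \hat g_k, \quad k = 1, \ldots, n^\Sigma,
 \qquad
 \kappa_a \sum_{i=1}^N \frac{\nu_i^{\Sigma,a}}{c_i^\ast} A_i = \hat h_a, \quad a = 1, \ldots, m^\Sigma.
\]

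For injectivity, suppose $(\hat g, \hat h) = 0$. The second block forces $(A_i/c_i^\ast)_i$ to be orthogonal (over $\C$) to every $\vec \nu^{\Sigma,a}$, hence by the detailed-balance assumption $(A_i/c_i^\ast)_i \in \lin \{\vec e^k\}$; the first block forces $(d_i \omega_i A_i)_i \perp \vec e^k$ for every $k$, so $(d_i \omega_i A_i)_i \in \lin \{\vec \nu^{\Sigma,a}\}$. Pairing these two vectors via the standard Hermitian product on $\C^N$ and using that the $\vec e^k$ and $\vec \nu^{\Sigma,a}$ are real and mutually orthogonal, one obtains
\[
 0 = \sum_{i=1}^N \overline{(A_i/c_i^\ast)} \, d_i \omega_i A_i = \sum_{i=1}^N \frac{d_i \omega_i}{c_i^\ast} \abs{A_i}^2;
\]
taking real parts and using $d_i, c_i^\ast > 0$ together with $\Re \omega_i > 0$ yields $A_i = 0$ for all $i$.

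The hard part is precisely this Lopatinskii--Shapiro verification: it rests on combining the algebraic orthogonality structure coming from detailed balance with the positivity of the characteristic roots $\omega_i$ coming from parabolicity, via the Hermitian pairing argument above. Once this is established, the maximal regularity estimate, the identification of the sharp data spaces, and the existence of the damping threshold $\mu_0 \geq 0$ (needed to compensate the unboundedness of $\Omega$ and to keep $\lambda + \mu + d_i\abs{\xi'}^2$ uniformly away from the origin in the closed right half-plane) all follow directly from the abstract theorem in the appendix.
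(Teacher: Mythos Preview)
Your approach is correct and essentially identical to the paper's: both take the partial Laplace--Fourier transform, discard the growing mode of the resulting scalar ODEs, and reduce the Lopatinskii--Shapiro condition to injectivity of the same $N\times N$ linear system, which is then established by the Hermitian pairing $\sum_i \tfrac{d_i\omega_i}{c_i^\ast}\abs{A_i}^2 = 0$ combined with $\Re\omega_i>0$. The only presentational difference is that the paper factors the linear-algebra step into a separate lemma (invertibility of a matrix with rows $(\bb D\vec\nu^{\Sigma,a})^\mathsf{T}$ and $(\vec e^k)^\mathsf{T}$ under the hypothesis $0\notin\operatorname{conv}\{\delta_i\}$), whereas you carry out the same argument inline; your use of $\Re\omega_i>0$ is the special case that actually arises here.
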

  \begin{proof}
   Starting with the system of PDEs \eqref{eqn:half-space_constant_coeff}, taking the partial Laplace-Fourier transform $\mathcal{F}$ for $(\lambda, \vec \xi') \in \overline{\C_0^+} \times \R^{n-1}$, and setting $y = z_n$, leads (formally) to the following parameter-dependent initial value problems
  \begin{align}
   (\lambda + d_i \abs{\vec \xi'}^2) \hat v_i - d_i \frac{\partial^2}{\partial y^2} \hat v_i + \mu \hat v_i(\lambda, \vec \xi', y)
    &= \hat f_i(\lambda, \vec \xi', y),
    &&i = 1, \ldots, N, \, (\lambda, \vec \xi', y) \in \overline{\C_0^+} \times \R^{n-1} \times (0, \infty)
    \nonumber \\
   \ip{\vec e^k}{\bb D \frac{\partial}{\partial_{y}} \hat {\vec v}(\lambda, \vec \xi',0)}
    &= \hat g_k(\lambda, \vec \xi'),
    &&k = 1, \ldots, n^\Sigma, \, (\lambda, \vec \xi') \in \overline{\C_0^+} \times \R^{n-1},
    \label{eqn:half-space_fourier}
    \\
   \bb C_a \vec \nu^{\Sigma,a} \cdot \hat{\vec v}(\lambda, \vec \xi')
    &= \hat h_a(\lambda, \vec \xi'),
    &&a = 1, \ldots, m^\Sigma, \, (\lambda, \vec \xi') \in \overline{\C_0^+} \times \R^{n-1},
    \nonumber \\
   \hat{\vec v}(0,\vec \xi, y)
    &=\hat {\vec v}^0(\vec \xi, y),
    &&(\vec \xi', y) \in \R^{n-1} \times (0, \infty),
    \nonumber
  \end{align}
 where $\hat v_i = \mathcal{F} v_i$, $\hat f_i = \mathcal{F} f_i$ etc.
 For fixed $(\lambda, \vec \xi) \in \overline{\C_0^+} \times \R^{n-1} \setminus \{(0,\vec 0)\}$, the general solution to the ODE in the first line of this system is
   \[
    v_i(\lambda, \vec \xi', y)
     = \exp \left( (- (\tfrac{\lambda + \mu}{d_i} + \abs{\vec \xi'}^2)^{\nicefrac{1}{2}}) y \right) \hat v_{i,+}(\lambda, \vec \xi')
      +  \exp \left( (- (\tfrac{\lambda + \mu}{d_i} + \abs{\vec \xi'}^2)^{\nicefrac{1}{2}} y \right)) \hat v_{i,-}(\lambda, \vec \xi').
   \]
 As we look for a solution in the class $\vec v_i = \mathcal{F}^{-1}(\hat v_i) \in \LL_p((0,T); \LL_p(\R^{n-1} \times (0,\infty))$, one necessarily needs to have $\hat v_{i,+}(\lambda, \vec \xi') = 0$ for a.e.\ $(\lambda, \vec \xi') \in \overline{\C_0^+} \times \R^{n-1}$, and in that case
  \[
   d_i \frac{\partial}{\partial y} \hat v_i(\lambda, \vec \xi',0)
    = - d_i \left( (\tfrac{\lambda + \mu}{d_i} + \abs{\vec \xi'}^2)^{\nicefrac{1}{2}} \right) \hat v_i(\lambda, \vec \xi', 0).
  \]
 To match the solution with the boundary conditions at $\Sigma = \R^{n-1} \times \{0\}$, we thus need to solve the linear systems
  \begin{align*}
   - \ip{\vec e^k}{\bb D \bb R \vec v(\lambda, \vec \xi', 0)}
    &= \hat g_k(\lambda, \vec \xi),
    &&k = 1, \ldots, n^\Sigma, \, (\lambda, \vec \xi') \in \overline{\C_0^+} \times \R^{n-1},
    \\
   \bb C_a \vec \nu^{\Sigma,a} \cdot \vec v(\lambda, \vec \xi',0)
    &= \hat h_a(\lambda, \vec \xi'),
    &&a = 1, \ldots, m^\Sigma, \, (\lambda, \vec \xi') \in \overline{\C_0^+} \times \R^{n-1},
  \end{align*}
 where $\bb R = \diag \left( (\tfrac{\lambda + \mu}{d_i} + \abs{\vec \xi}^2)^{\nicefrac{1}{2}} \right)_{i=1}^N$. Writing $\vec w = \bb D^{-1} \bb R^{-\ast} \vec v$, this system is uniquely solvable for all $(\lambda, \vec \xi') \in \overline{\C_0^+} \times \R^{n-1}$ if and only if the matrix
  \[
   \left[ \begin{array}{c}
    (\vec e^1)^\mathsf{T} \\
    \vdots \\
    (\vec e^{n^\Sigma})^\mathsf{T} \\
    (\bb D^{-1} \bb R^{-\ast} \bb C_1 \vec \nu^{\Sigma,1})^\mathsf{T} \\
    \vdots \\
    (\bb D^{-1} \bb R^{-\ast} \bb C_m \vec \nu^{\Sigma,m^\Sigma})^\mathsf{T}
   \end{array} \right]
   \in \C^{N \times N}
   \quad
   \text{is invertible}.
  \]
 Since all matrices $\bb D$, $\bb R$ and $\bb C_a$ are diagonal and the matrices
  \[
   \bb C_a
    = (\vec c^\ast|_\Sigma)^{\vec \nu^{\Sigma,a}} \tilde {\bb C},
    \quad
    a = 1, \ldots, m
  \]
 coincide with the $N \times N$ diagonal matrix $\tilde C = \diag (c_i^\ast|_\Sigma)_{i=1, \ldots, N}^{-1}$ up to a non-zero factor $(\vec c^\ast|_\Sigma)^{\vec \nu^{\Sigma,a}}$ this matrix is invertible if and only if
  \[
   \left[ \begin{array}{c}
    (\tilde {\bb D} \vec \nu^{\Sigma,1})^\mathsf{T} \\
    \vdots \\
    (\tilde {\bb D} \vec \nu^{\Sigma,m^\Sigma})^\mathsf{T} \\
    (\vec e^1)^\mathsf{T} \\
    \vdots \\
    (\vec e^{n^\Sigma})^\mathsf{T} \\
   \end{array} \right]
   \in \C^{N \times N}
   \quad
   \text{is invertible},
  \]
 where $\tilde {\bb D} = \bb D^{-1} \bb R^{-\ast} \tilde {\bb C}$ is an $N \times N$ diagonal matrix with entries $[\tilde {\bb D}]_{ii} \in \C_0^+$ on the diagonal.
 Due to Lemma \ref{lem:invertibility} below, this is the case.
 Using standard properties of the partial Fourier-Laplace transform and suitable multiplier theorems, it now follows that there is $\mu_0 \geq 0$ such that for all $\mu \geq \mu_0$, the system \eqref{eqn:half-space_constant_coeff} has a unique solution in the class $\vec v \in \WW_p^{(1,2)}(J \times \Omega; \R^N)$ if and only if the initial data are subject to the regularity and compatibility conditions listed in the proposition.
 This solves the problem of $\LL_p$-maximal regularity for the constant coefficient case on the half-space.
\end{proof}

 \begin{lemma}
 \label{lem:invertibility}
  Let $s,m \in \N$ and $N = s + m$.
  Let $\vec v^1, \ldots, \vec v^m \in \R^N$ and $\vec w^1, \ldots, \vec w^s \in \R^N$ be linearly independent, real vectors such that
   \[
    \ip{\vec v^i}{\vec w^j}
     = 0,
     \quad
     i = 1, \ldots, m, \,
     j = 1, \ldots, s.
   \]
  Let $\delta_j \in \C$, $j = 1, \ldots, N$, be such that $0 \not \in \operatorname{conv} \{\delta_j: j = 1, \ldots, N\}$ and the matrix $\bb M \in \C^{N \times N}$ be defined as
   \[
    \bb M
     = \left[ \begin {array}{c}
     (\bb D \vec v^1)^\mathsf{T} \\ \vdots \\ (\bb D \vec v^m)^\mathsf{T} \\ (\vec w^1)^\mathsf{T} \\ \vdots \\ (\vec w^s)^\mathsf{T}
     \end{array} \right]
     \quad
     \text{where}
     \quad
     \bb D = \diag (\delta_1, \ldots, \delta_N) \in \C^{N \times N}.
   \]
  Then $\bb M$ is invertible, i.e.\ $\bb D \vec v^1$, \ldots, $\bb D \vec v^m$, $\vec w^1$, \ldots, $\vec w^s$ form a basis of $\C^N$.
 \end{lemma}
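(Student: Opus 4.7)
The plan is to rephrase invertibility of $\bb M$ as linear independence, in $\C^N$, of the vectors $\bb D \vec v^1, \ldots, \bb D \vec v^m, \vec w^1, \ldots, \vec w^s$ (which are the columns of $\bb M^\mathsf{T}$). Suppose towards a contradiction that there are complex scalars $\alpha_1, \ldots, \alpha_m, \beta_1, \ldots, \beta_s$, not all zero, such that $\sum_i \alpha_i \bb D \vec v^i + \sum_j \beta_j \vec w^j = 0$, and set
\[
 \vec v := \sum_{i=1}^m \alpha_i \vec v^i \in \C^N, \qquad \vec w := \sum_{j=1}^s \beta_j \vec w^j \in \C^N.
\]
The relation then becomes the single identity $\bb D \vec v = - \vec w$ in $\C^N$.

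The key step is to pair this identity with the Hermitian conjugate $\overline{\vec v}$, rather than with $\vec v$ itself. On the one hand, the diagonal structure of $\bb D$ gives
\[
 \overline{\vec v}^\mathsf{T} \bb D \vec v = \sum_{k=1}^N \delta_k \abs{v_k}^2,
\]
a combination of the $\delta_k$ with nonnegative real weights. On the other hand, since the given vectors $\vec v^i$ and $\vec w^j$ are \emph{real} and, by hypothesis, pairwise orthogonal, one has
\[
 \overline{\vec v}^\mathsf{T} \vec w = \sum_{i=1}^m \sum_{j=1}^s \overline{\alpha_i} \beta_j \, (\vec v^i)^\mathsf{T} \vec w^j = 0.
\]
Combining both computations yields $\sum_{k=1}^N \delta_k \abs{v_k}^2 = 0$. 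If $\vec v \neq 0$, dividing by $\sum_k \abs{v_k}^2 > 0$ exhibits the origin as a convex combination of a nonempty subset of $\{\delta_1, \ldots, \delta_N\}$, contradicting the assumption $0 \notin \operatorname{conv}\{\delta_1, \ldots, \delta_N\}$. Hence $\vec v = 0$.

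Linear independence of the $\vec v^i$ then forces $\alpha_1 = \cdots = \alpha_m = 0$; consequently $\vec w = -\bb D \vec v = 0$, and linear independence of the $\vec w^j$ gives $\beta_1 = \cdots = \beta_s = 0$, contradicting the assumption that not all coefficients vanish. The only delicate point is precisely this choice of pairing: the coefficients $\alpha_i, \beta_j$ are genuinely complex while the convex-hull hypothesis on $\{\delta_k\}$ is a real-geometric statement. Pairing $\bb D \vec v = -\vec w$ with the bilinear form $\vec v^\mathsf{T}$ would only produce $\sum_k \delta_k v_k^2$ with complex $v_k^2$, which is useless for invoking the convex-hull assumption; it is the switch to the Hermitian form $\overline{\vec v}^\mathsf{T}$ that both preserves the orthogonality $(\vec v^i)^\mathsf{T} \vec w^j = 0$ and manufactures the nonnegative real weights $\abs{v_k}^2$ needed in the last step.
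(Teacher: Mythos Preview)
Your proof is correct and follows essentially the same approach as the paper's: both arrive at the identity $\sum_k \delta_k \abs{v_k}^2 = 0$ via a Hermitian pairing that exploits the realness of the $\vec v^i$, and then invoke the convex-hull hypothesis. The only difference is cosmetic---the paper shows $\ker \bb M = \{0\}$ (and must first argue that a kernel element lies in $\operatorname{span}\{\vec v^i\}$), whereas you show the rows of $\bb M$ are linearly independent, so your $\vec v$ lies in that span by construction; the core computation is identical.
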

 \begin{proof}
  As $\bb M \in \C^{n \times n}$ is a square matrix, it suffices to demonstrate injectivity of $\bb M$.
  Let $\vec u \in \ker (\bb M)$. Then, in particular,
   \[
    0
     = [\bb M \vec u]_{m+j}
     = \sum_{i=1}^N w_i^j u_i,
     \quad
     \text{i.e.\ } \vec u \bot \vec w^j,
     \quad
     j = 1, \ldots, s.
   \]
  Therefore, there are $\vec \gamma_i \in \C$, $i = 1, \ldots, m$, such that
   \[
    \vec u
     = \sum_{i=1}^m \gamma_i \vec v^i.
   \] 
  Writing $\bb V = \left[ \begin{array}{ccc} \vec v^1 & \cdots & \vec v^m \end{array} \right] \in \bb \R^{N \times m}$, from $\bb M \vec u = \vec 0$ it follows $\bb V^\mathsf{T} \bb D \vec u = \vec 0$, thus
   \[
    \vec 0
     = \sum_{i=1}^m \gamma_i \bb V^\mathsf{T} \bb D \vec v^i
     = \bb V^\mathsf{T} \bb D \sum_{i=1}^m \gamma_i \vec v^i
     = \bb V^\mathsf{T} \bb D \bb V \vec \gamma,
     \quad
     \text{for }
     \vec \gamma = (\gamma_1, \ldots, \gamma_m)^\mathsf{T} \in \C^m.
   \]
  In particular, since $\bb V^\mathsf{T} = \bb V^\ast$ (as $\bb V$ has real entries), for the inner product on $\C^m$ one finds
   \[
    0
     = \cip{\bb V^\mathsf{T} \bb D \bb V \vec \gamma}{\vec \gamma}_{\C^m}
     = \cip{\bb D \bb V \vec \gamma}{\bb V \vec \gamma}_{\C^m}
     = \sum_{i=1}^m \delta_i \abs{(\bb V \vec \gamma)_i}^2.
   \]
  As $0 \not\in \operatorname{conv} \{\delta_i: i = 1, \ldots, m\}$, and $\abs{(\bb V \vec \gamma)_i}^2 \geq 0$, this can only hold true if $\vec u = \bb V \vec \gamma = \vec 0$, and $\bb M$ must be injective.
 \end{proof}
 
 The general $\LL_p$-maximal regularity theorem (for bounded $\CC^2$-domains $\Omega$) can then be derived via the standard technique, i.e.\ first a generalisation to the bend space problem and, thereafter, a localisation procedure. For these techniques to work properly, one needs additional conditions on the (then non-constant) reference function $\vec c^\ast: \overline{\Omega} \rightarrow \R^N$.
 In this particular case, the bulk diffusion operator $- \bb D \Delta$ does not depend on the spatial position $\vec z \in \Omega$. Therefore, there is no need to consider perturbations of it, i.e.\ $\mathcal{A}^{sm} = 0$ in the language of \cite{DeHiPr03}, \cite{DeHiPr07}. Neither do the conserved quantities $\vec e^k$, $k = 1, \ldots, n^\Sigma$, but only the matrix $\bb C(\vec z) = \diag (\vec c^\ast|_\Sigma)^{-1}(\vec z)$ (which is $\bb C_a$ up to a $a$-dependent factor $(\vec c^\ast|_\Sigma)^{\vec \nu^{\Sigma,a}}$) depend on the spatial position $\vec z \in \overline{\Omega}$.
 Using the same strategy as in \cite{DeHiPr03}, one may write
  \[
   \bb C(\vec z) \vec \nu^{\Sigma,a} \cdot \vec v|_\Sigma
    = \bb C(\vec z_0) \vec \nu^{\Sigma,a} \cdot \vec v + (\bb C(\vec z) - \bb C(\vec z_0)) \cdot \vec v
    =: \bb C(\vec z_0) \vec \nu^{\Sigma,a} \cdot \vec v + \bb C^\mathrm{sm}(\vec z) \vec \nu^{\Sigma,a} \cdot \vec v,
  \]
  where $\bb C^\mathrm{sm}(\vec z)$ corresponds to a \emph{small} perturbation of $\bb C(\vec z_0)$. As in \cite{DeHiPr03}, one then considers the problem
   \begin{align*}
    (\lambda - \bb D \Delta) \vec v(\lambda, \vec z)
     &= f(\lambda,\vec z),
     &&\lambda \in \overline{\C_0^+}, \, \vec z \in \R^{n-1} \times (0, \infty)
     \\
    \ip{\vec e^k}{\bb D \frac{\partial}{\partial y} \hat {\vec v}(\lambda, \vec z',0)}
     &= 0,
     &&k = 1, \ldots, n^\Sigma, \, \lambda \in \overline{\C_0^+}, \, \vec z' \in \R^{n-1}
     \\
    \bb C \vec \nu^{\Sigma,a} \cdot \vec v(\lambda, \vec z',0)
     &= - \bb C^\mathrm{sm} \vec \nu^{\Sigma,a} \cdot \vec v(\lambda, \vec z',0),
     && \lambda \in \overline{\C_0^+}, \, \vec z' \in \R^{n-1}.
   \end{align*}
 Letting $A_0 = - \bb D \Delta$ on the domain \[\dom(A_0) = \{ \vec v \in \WW_p^2(\Omega; \R^N): \, \ip{\vec e^k}{\bb D \frac{\partial}{\partial y} \hat {\vec v}|_\Sigma}= 0 \quad (k = 1, \ldots, n^\Sigma), \, \bb C \vec \nu^{\Sigma,a} \cdot \vec v|_\Sigma = 0 \quad (a = 1, \ldots, m^\Sigma) \} \]
 one then needs to derive a fixed point equation of the form
  \[
   \vec v
    = (\lambda + A_0)^{-1} f - \sum_{j=1}^m S_\lambda^j (\bb C^\mathrm{sm} \vec \nu^{\Sigma,a}) \cdot \vec v|_\Sigma.
  \]
 The regularity assumptions in \cite{DeHiPr07} now suggest that one should demand the following regularity of $\bb C$, hence of $\vec c^\ast$:
  \begin{quote}
   There are $s,r \geq p$ with $\tfrac{1}{s} + \tfrac{n-1}{2 r} < 1 - \tfrac{1}{2p}$ such that
    \[
     C
      \in \WW_{s,r}^{(1,2) \cdot (1-\nicefrac{1}{2p})}(J \times \Sigma; \R^{N \times N})
      := \WW_s^{1-\nicefrac{1}{2p}}(J;\LL_r(\Sigma;\R^{N \times N})) \cap \LL_s(J; \WW_r^{2-1/p}(\Sigma; \R^{N \times N})),
    \]
   hence the reference function $\vec c^\ast$ should be \emph{uniformly positive} and
    \[
     \vec c^\ast
      \in \WW_{s,r}^{(1,2) \cdot (1-\nicefrac{1}{2p})}(J \times \Sigma; \R^N)
      := \WW_s^{1-\nicefrac{1}{2p}}(J;\LL_r(\Sigma;\R^N)) \cap \LL_s(J; \WW_r^{2-1/p}(\Sigma; \R^N)).
    \]
  \end{quote}
 \begin{remark}
  Note that for regularity of $\bb C$, one has to consider the regularity of the functions $(c_k^{\ast})^{-1}$, hence of
   \[
    \frac{\partial}{\partial t} \frac{1}{c_k^\ast}
     = - \frac{\partial_t c_k^\ast}{(c_k^{\ast})^2},
     \quad
    \frac{\partial}{\partial z_i} \frac{1}{c_k^\ast}
     = - \frac{\partial_{z_i} c_k^\ast}{(c_k^{\ast})^2},
     \quad
    \frac{\partial^2}{\partial z_i \partial z_j} \frac{1}{c_k^\ast}
     = - \frac{\partial_{z_i} \partial_{z_j} c_k^\ast}{(c_k^{\ast})^2} - 2 \frac{\partial_{z_i} c_k^\ast \partial_{z_j} c_k^\ast}{(c_k^\ast)^3}.
   \]
 \end{remark}
 Since the reference function should generally lie in the function class $\vec c^\ast \in \WW_p^{(1,2) \cdot (1-\nicefrac{1}{2}p)}(J \times \Sigma; \R^N)$, one naturally should take $s = r = p$, and then the condition on the values of $s$ and $r$ reads
  \[
   \frac{n+1}{2p} < 1 - \frac{1}{2p}
    \quad \Leftrightarrow \quad
   p > \frac{n+2}{2}.
  \]
 Note that in this case the embeddings $\WW_p^{(1,2)}(J \times \Omega) \hookrightarrow \CC(\overline{J} \times \overline{\Omega})$ and $\BB_{pp}^{2-2/p}(\Omega) \hookrightarrow \CC(\overline{\Omega})$ are continuous.
 
 \begin{theorem}
  Let $p > \frac{n+2}{2}$ and $\vec c^\ast \in \WW_p^{(1,2)}(J \times \Omega; (0,\infty)^N)$, where $J = (0,T)$ for some $T > 0$ and $\Omega \subseteq \R^n$ is a bounded $\CC^2$-domain.
  Then, the linearised problem
   \begin{align*}
    \partial_t \vec v - \bb D \Delta \vec v
     &= \vec f,
     &&(t, \vec z) \in (0, \infty) \times \Omega,
     \\
    \ip{\vec e^k}{\bb D \frac{\partial}{\partial \vec n} \vec v}
     &= g_k,
     &&k = 1, \ldots, n^\Sigma, \, (t, \vec z) \in (0, \infty) \times \Sigma,
     \\
    \bb C \vec \nu^{\Sigma,a} \cdot \vec v
     &= h_a,
     &&a = 1, \ldots, m^\Sigma, \, (t, \vec z) \in (0, \infty) \times \Sigma
     \\
    \vec v(0,\cdot)
     &= \vec v^0,
     &&\vec z \in \Omega
   \end{align*}
  has a unique solution in the class $\vec v \in \WW_p^{(1,2)}(J \times \Omega; \R^N)$ if and only if the data are subject to the following regularity and compatibility conditions:
  \begin{enumerate}
   \item
    $\vec f \in \LL_p(J \times \Omega;\R^N) = \LL_p(J; \LL_p(\Omega; \R^N))$,
   \item
    $\vec g \in \WW_p^{(1,2) \cdot (\nicefrac{1}{2}-\nicefrac{1}{2p})} (J \times \Sigma; \R^{n^\Sigma}) = \WW_p^{\nicefrac{1}{2} - \nicefrac{1}{2p}}(J;\LL_p(\Sigma; \R^{n^\Sigma})) \cap \LL_p(J; \WW_p^{1-1/p}(\Sigma; \R^{n^\Sigma})$,
   \item
    $\vec h \in \WW_p^{(1,2) \cdot (1 - \nicefrac{1}{2p}}(J \times \Sigma) = \WW_p^{1-\nicefrac{1}{2p}}(J;\LL_p(\Sigma; \R^{m^\Sigma})) \cap \LL_p(J; \WW_p^{2-1/p}(\Sigma;\R^{m^\Sigma}))$,
   \item
    $\vec v^0 \in \BB_{pp}^{2-2/p}(\Omega; \R^N)$ (which is $\WW^{2-2/p}_p(\Omega;\R^N)$, unless $n=1$ and $p = 2$),
   \item
    $\bb C_a^0 \vec \nu^{\Sigma,a} \cdot \vec v^0|_\Sigma = h_a|_{t=0}$,
   \item
    $\ip{\vec e^k}{\bb D \frac{\partial}{\partial \vec n} \vec v^0|_\Sigma} = g_k|_{t=0}$ (if $p > n + 2$).
  \end{enumerate}
  In this case, the solution depends continuously on the data, i.e.\ for some $C = C(n,\Omega) > 0$ independent of the data it holds that
   \[
    \norm{\vec v}_{\WW_p^{(1,2)}}
     \leq C \left( \norm{\vec f}_{\LL_p} + \norm{\vec g}_{\WW_p^{(1,2) \cdot (\nicefrac{1}{2}-\nicefrac{1}{2p})}} + \norm{\vec h}_{\WW_p^{(1,2) \cdot (1-\nicefrac{1}{2p})}} + \norm{\vec c^0}_{\BB_{pp}^{2-2/p}(\Omega)} \right).
   \]
 \end{theorem}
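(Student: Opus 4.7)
The plan is to follow the standard three-step reduction to the half-space constant-coefficient result already proved, namely: pass from half-space constant coefficients to half-space with spatially varying $\bb C$, then to a bent half-space, and finally localise to the bounded $\CC^2$-domain $\Omega$. Sufficiency of the data then follows from adapting the perturbation-localisation machinery of \cite{DeHiPr03, DeHiPr07}, while necessity of the regularity and compatibility conditions is read off directly from the standard trace theorems for anisotropic Sobolev spaces $\WW_p^{(1,2)}$. The a priori estimate comes out of the closed-graph theorem once existence and uniqueness are established.

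First I would freeze the coefficient $\bb C$ at an arbitrary boundary point $\vec z_0 \in \Sigma$ and write $\bb C(\vec z)\vec \nu^{\Sigma,a}\cdot\vec v|_\Sigma = \bb C(\vec z_0)\vec \nu^{\Sigma,a}\cdot\vec v|_\Sigma + \bb C^{\mathrm{sm}}(\vec z)\vec \nu^{\Sigma,a}\cdot\vec v|_\Sigma$, as already suggested in the text. For the frozen-coefficient problem the Lopatinskii--Shapiro condition is precisely the invertibility of the matrix arising in the proof of the constant-coefficient proposition, and this is supplied by Lemma \ref{lem:invertibility} applied with $\vec v^a = \vec \nu^{\Sigma,a}$ and $\vec w^j = \vec e^j$, since the $\vec e^k$ span the orthogonal complement of $\lin\{\vec \nu^{\Sigma,a}\}$ and the diagonal symbol matrix has entries in $\C_0^+$. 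This yields maximal regularity for the frozen half-space problem with estimate independent of $\vec z_0$, provided $\vec c^\ast|_\Sigma$ is uniformly bounded away from zero.

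Next I would treat $\bb C^{\mathrm{sm}}\vec \nu^{\Sigma,a}\cdot(\cdot)|_\Sigma$ as a perturbation of the trace-type boundary operator, estimate its norm as a map $\WW_p^{(1,2)}(J\times\Omega;\R^N)\to \WW_p^{(1,2)\cdot(1-\nicefrac{1}{2p})}(J\times\Sigma;\R^{m^\Sigma})$ by the pointwise multiplier bounds of \cite{DeHiPr07}, and absorb it by a Neumann series (equivalently, a fixed-point argument like the one sketched in the excerpt with the solution operators $S_\lambda^j$). The crucial input is the standing assumption $p>(n+2)/2$: it forces the embeddings $\WW_p^{(1,2)}\hookrightarrow \CC(\overline{J}\times\overline{\Omega})$ and $\BB_{pp}^{2-2/p}\hookrightarrow \CC(\overline{\Omega})$ to be continuous, which together with uniform positivity of $\vec c^\ast$ ensures that $(\vec c^\ast)^{-1}$, and hence $\bb C$, lies in the same anisotropic multiplier class as $\vec c^\ast$ itself and that $\|\bb C^{\mathrm{sm}}\|$ becomes arbitrarily small on a sufficiently small boundary chart and a short time interval. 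A standard bent-half-space transformation then transfers the result to locally flattenable curved pieces of $\Sigma$, since $\partial\Omega\in\CC^2$ guarantees that the pull-back perturbations of $-\bb D\Delta$ and of $\partial_{\vec n}$ are of lower order.

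Finally, I would choose a finite atlas of the compact set $\overline{\Omega}$ by charts which are either interior (where one uses standard $\LL_p$-maximal regularity for $\partial_t - \bb D\Delta$ on $\R^n$) or boundary charts flattening $\Sigma$; with a subordinated partition of unity $\{\varphi_\ell\}$, the equations for $\varphi_\ell\vec v$ reduce to the bent-half-space problem with right-hand sides depending on $\vec v$ through commutators of order one. A further short-time contraction argument on intervals $[t_0,t_0+\tau]$ closes the loop, and iterating on the finite $J=(0,T)$ gives the global-in-$J$ estimate. The main obstacle, as in \cite{DeHiPr07}, is the combined-order character of the boundary conditions: the classical multiplier estimates must be verified separately for the order-zero component $\bb C\vec \nu^{\Sigma,a}\cdot$ landing in $\WW_p^{(1,2)\cdot(1-\nicefrac{1}{2p})}$ and the order-one component $\ip{\vec e^k}{\bb D\partial_{\vec n}(\cdot)}$ landing in $\WW_p^{(1,2)\cdot(\nicefrac{1}{2}-\nicefrac{1}{2p})}$, and the Lopatinskii--Shapiro symbol must be checked uniformly across this mixed order structure; but the diagonal algebraic structure of $\bb D$, $\bb C_a$ and the Fourier symbol $\bb R$ together with Lemma \ref{lem:invertibility} reduces this to the geometric condition that $\{\vec e^k\}$ and $\{\vec \nu^{\Sigma,a}\}$ are orthogonal complements of one another, which holds by construction. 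Necessity of the listed compatibility conditions follows because whenever $p$ exceeds the critical threshold for the respective boundary trace, that trace is continuous into $\CC(\overline{\Sigma})$, forcing the pointwise identity at $t=0$.
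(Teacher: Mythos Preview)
Your proposal is correct and follows essentially the same route as the paper: freeze the coefficient $\bb C$ at a boundary point, verify the Lopatinskii--Shapiro condition via Lemma~\ref{lem:invertibility}, absorb the variable-coefficient remainder $\bb C^{\mathrm{sm}}$ by a Neumann-series/fixed-point perturbation argument as in \cite{DeHiPr03,DeHiPr07} (using $p>\tfrac{n+2}{2}$ to control the multiplier norm of $\bb C$), then pass to bent half-spaces and localise on the bounded $\CC^2$-domain. The paper does not spell out a self-contained proof after the theorem statement but describes exactly this strategy in the preceding discussion and defers the technical generalisation of \cite{DeHiPr03,DeHiPr07} to combined-order boundary operators to the appendix, which your outline also anticipates.
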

 
As for many semi-linear systems, $\LL_p$-maximal regularity, and in this case $\LL_p$-$\LL_q$-estimates can be employed to find a (unique) strong solution of the quasi-linear problem. This will be the aim of the next subsection.
   
 \subsection{Local-in time existence of strong solutions, blow-up criteria and a-priori bounds}
 \label{subsection:existence_blowup_aprioribounds}
 
 Having maximal regularity for the linearised problem at hand, we may now come back to the non-linear problem. $\LL_p$-maximal regularity will play the typically crucial role in the construction of strong solutions via the contraction mapping principle. Thereby, rather as a by-product, a condition for continuation of the solution to a maximal solution will be established, where in general the solution may be global in time (which might be expected for a large subclass of the systems considered here), or one may observe a blow-up in finite time. Whereas the boundedness of the $\BB_{pp}^{2-2/p}$-norm cannot be guaranteed in general, or more precisely, it is unclear whether boundedness holds true without any restriction on the bulk and surface chemistry, for slightly weaker norms a-priori bounds are possible, indeed. The latter will be considered in the second part of this subsection.

 \subsubsection{Local-in time existence and maximal continuation of solutions}
 
 Recall the form of the fast-sorption--fast-surface-chemistry limit \eqref{eqn:FSFSC-limit}, and additionally consider a given initial datum $\vec c^0: \overline{\Omega} \rightarrow \R^N$ which should be regular enough (in a sense to be made precise later on):
  \begin{align*}
   \partial_t \vec c - \bb D \Delta \vec c
    &= \sum_a \left( k_a^f \vec c^{\vec \alpha^a} - k_a^b \vec c^{\vec \beta^a} \right) \vec \nu^a,
    &&(t, \vec z) \in (0, \infty) \times \Omega,
    \\
   \ip{\vec e^k}{\bb D \partial_{\vec n} \vec c}
    &= 0,
    &&k = 1, \ldots, n^\Sigma, \, (t, \vec z) \in (0, \infty) \times \Sigma,
    \tag{\ref{eqn:FSFSC-limit}'}
    \label{eqn:FSFSC-limit'}
    \\
   \vec c^{\vec \nu^{\Sigma,a}}
    &= \exp(- \vec \nu^{\Sigma,a} \cdot \vec g(T)),
    &&a = 1, \ldots, m^\Sigma, \, (t, \vec z) \in (0, \infty) \times \Sigma,
    \\
   \vec c|_{t=0}
    &= \vec c^0,
    &&\vec z \in \Omega.
  \end{align*}
 Introducing $\vec v(t,\vec z) := \vec c(t,\vec z) - \vec c^0(\vec z)$ leads to the reaction-diffusion-sorption system for $\vec v$ as follows:
  \begin{align*}
   \partial_t \vec v - \bb D \Delta \vec v
    &= \sum_a \left( k_a^f \left[ (\vec v + \vec c^0)^{\vec \alpha^a}  - (\vec c^0)^{\vec \alpha^a} \right] - k_a^b \left[ (\vec v + \vec c^0)^{\vec \beta^a} - (\vec c^0)^{\vec \beta^a} \right] \right) \vec \nu^a,
    &&(0, \infty) \times \Omega,
    \\
   \ip{\vec e^k}{\bb D \partial_{\vec n} \vec v}
    &= - \ip{\vec e^k}{\bb D \partial_{\vec n} \vec c^0},
    &&(0, \infty) \times \Sigma, \, k = 1, \ldots, n^\Sigma
    \\
   \bb C_a^0 \vec \nu^{\Sigma,a} \cdot \vec v
    &= [\bb C_a(\vec c^0 + \vec v) - \bb C_a(\vec c^0)] \vec \nu^{\Sigma,a} \cdot (\vec v + \vec c^0)
    &&(0, \infty) \times \Sigma, \, a = 1, \ldots, m^\Sigma
    \\
   \vec v|_{t=0}
    &= \vec 0,
    &&\Omega,
  \end{align*}
 where $\bb C_a^0 = (\vec c^0)^{- \vec \nu^{\Sigma,a}} \diag (\vec c^0)^{-1}: \Sigma \rightarrow \R^{N \times N}$.
 Next, assume that $p > \tfrac{n + 2}{2}$, and $\vec v^0 \in \BB_{pp}^{2-2/p}(\Omega)$ with
  \[
   \ip{\vec e^k}{\bb D \partial_{\vec n} \vec c^0|_\Sigma} = 0, \, k = 1, \ldots, n^\Sigma,
    \quad \text{and} \quad
   \vec v^0|_\Sigma^{\vec \nu^{\Sigma,a}} = \exp(- \vec \nu^{\Sigma,a} \cdot \vec g(T)), \, a = 1, \ldots, m^\Sigma
  \]
 and for $\tau > 0$ consider the linear solution operator
  \[
   \Phi_\tau: \DD_0 \subseteq \WW_p^{(1,2)}((0,\tau) \times \Omega; \R^N) \rightarrow \WW_p^{(1,2)}((0,\tau) \times \Omega; \R^N)
  \]
 given by $\vec v \mapsto \vec w$, where
  \[
   \DD_0 = \{\vec v \in \WW_p^{(1,2)}((0,\tau) \times \Omega; \R^N): \, \vec v|_{t=0} \equiv 0 \}
  \] 
 and $\vec w$ is the unique strong solution to the linear problem
  \begin{align*}
   \partial_t \vec w - \bb D \Delta \vec w
    &= \sum_a \left( k_a^f \left[ (\vec v + \vec c^0)^{\vec \alpha^a}  - (\vec v^0)^{\vec \alpha^a} \right] - k_a^b \left[ (\vec v + \vec c^0)^{\vec \beta^a} - (\vec v^0)^{\vec \beta^a} \right] \right) \vec \nu^a,
    &&(0, \tau) \times \Omega,
    \\
   \ip{\vec e^k}{\bb D \partial_{\vec n} \vec w|_\Sigma}
    &= 0,
    &&(0, \tau) \times \Sigma, \, k = 1, \ldots, n^\Sigma
    \\
   \bb C \vec \nu^{\Sigma,a} \cdot \vec w
    &= \bb C \vec \nu^{\Sigma,a} \cdot \vec v - (\vec v + \vec c^0)|_\Sigma^{\vec \nu^{\Sigma,a}} + (\vec c^0)|_\Sigma^{\vec \nu^{\Sigma,a}},
    &&(0, \tau) \times \Sigma, \, a = 1, \ldots, m^\Sigma
    \\
   \vec w|_{t=0}
    &= \vec 0,
    &&\Omega.
  \end{align*}
 This problem can now be handled in the way typical for semi-linear parabolic problems, employing the maximal regularity of the linearised problem and the regularity of the nonlinear maps on the right hand side, which allows for a fixed point argument via the contraction mapping principle. To establish the regularity properties which are needed, one first needs the following auxiliary result on algebraic properties of $\WW_p^{(1,2)}(J \times \Omega)$ for bounded intervals $J$ and bounded $\CC^2$-domains $\Omega$.

 \begin{lemma}
  Let $p \in (\frac{n+2}{2}, \infty)$ and $\Omega \subseteq \R^n$ be a bounded $\CC^2$-domain. Fix $T_0 > 0$. For $T \in (0,T_0]$ and
   \[
    \DD_0(T)
     = \{ u \in \WW_p^{(1,2)}((0,T) \times \Omega): \, u(0) = 0 \},
   \]
  the embedding constant constants for the continuous embeddings
   \[
    \DD_0(T) \hookrightarrow \CC(\overline{J} \times \overline{\Omega})
   \]
  can be chosen independently of $T \in (0, T_0)$, e.g.\ $C_p = 2^{1/p} C_p(T_0)$ where $C_p(T_0)$ is an embedding constant for $T = T_0$.
 \end{lemma}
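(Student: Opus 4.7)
The plan is to reduce the $T$-dependent embedding to the fixed one at $T = T_0$ by constructing a bounded extension operator $E_T \colon \mathcal{D}_0(T) \to \WW_p^{(1,2)}((0, T_0) \times \Omega)$ of norm at most $2^{1/p}$, using the vanishing trace $u(0) = 0$ crucially. Once such $E_T$ is built, composition with the embedding on the fixed reference cylinder $(0, T_0) \times \Omega$ (whose constant $C_p(T_0)$ exists by the Sobolev--Morrey theorem of Section~\ref{sec:notation} since $p > (n+2)/2$) gives immediately
\[
 \|u\|_{\CC(\overline{J} \times \overline{\Omega})} \leq \|E_T u\|_{\CC([0,T_0] \times \overline{\Omega})} \leq C_p(T_0) \, \|E_T u\|_{\WW_p^{(1,2)}((0,T_0) \times \Omega)} \leq 2^{1/p} C_p(T_0) \, \|u\|_{\WW_p^{(1,2)}((0,T) \times \Omega)}.
\]

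For $E_T$, I would use a temporal even reflection at $t = T$ and, if $2T < T_0$, a subsequent zero-extension:
\[
 (E_T u)(t, z) := \begin{cases} u(t, z), & t \in [0, T], \\ u(2T - t, z), & t \in [T, \min(2T, T_0)], \\ 0, & t \in [2T, T_0] \text{ (if } 2T < T_0 \text{)}. \end{cases}
\]
Continuity in $t$ at $t = T$ is automatic; continuity at $t = 2T$ uses exactly the assumption $u(0, \cdot) = 0$, which matches the reflected value to the zero-extension and prevents any singular contribution to the distributional derivative. Hence the piecewise time-derivative ($\partial_t u$ on $[0,T]$, $-\partial_t u(2T - \cdot)$ on the reflected piece, and $0$ beyond) is genuinely the weak derivative of $E_T u$ in $\LL_p((0, T_0); \LL_p(\Omega))$.

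Step~two is a straightforward change of variables $s = 2T - t$ on the reflected interval. It yields $\|E_T u\|_{\LL_p}^p \leq 2\|u\|_{\LL_p}^p$, $\|\partial_t E_T u\|_{\LL_p}^p \leq 2\|\partial_t u\|_{\LL_p}^p$, and, since the reflection acts trivially in space, also $\|\nabla^k_x E_T u\|_{\LL_p}^p \leq 2\|\nabla^k_x u\|_{\LL_p}^p$ for $k = 0, 1, 2$; in all three estimates the substitution domain $[2T - T_0, T]$ (or $[0,T]$, whichever is smaller) is contained in $[0, T]$. Summing gives the desired norm bound $\|E_T u\|_{\WW_p^{(1,2)}((0, T_0) \times \Omega)} \leq 2^{1/p} \|u\|_{\WW_p^{(1,2)}((0,T) \times \Omega)}$. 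The only real subtlety is the matching at $t = 2T$ --- without $u(0) = 0$ a zero-extension would introduce a jump and produce a Dirac mass in the weak derivative --- but this is precisely built into the definition of $\mathcal{D}_0(T)$, so no additional work is needed. Combining the two steps produces the claimed uniform constant $C_p = 2^{1/p} C_p(T_0)$.
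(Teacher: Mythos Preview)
Your proposal is correct and is essentially the paper's own argument: an even reflection in time at $t=T$ followed by zero-extension (using $u(0)=0$ for the matching at $t=2T$), together with the obvious norm computation yielding the factor $2^{1/p}$, and then an application of the fixed embedding on $(0,T_0)\times\Omega$. In fact your write-up is slightly more careful than the paper's, since you use the correct reflection formula $u(2T-t,\cdot)$ (the paper's printed $u(T-t,\cdot)$ is a typo) and you handle the case $2T>T_0$ explicitly.
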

 \begin{proof}
  Since for $u \in \DD_0(T)$ one has $u(0) = 0$, it follows that
   \[
    \tilde u(t,\cdot)
     := \begin{cases}
      u(t,\cdot),
      &t \in [0, T],
      \\
     u(T-t,\cdot),
      &t \in (T,2T],
      \\
     0,
      &t > 2T
     \end{cases}
   \]
  defines a function $\tilde u \in \WW_p^{(1,2)}(\R_+ \times \Omega)$ and for its restriction to $[0,T_0] \times \Omega$ it holds that
   \begin{align*}
    \norm{\tilde u}_{\WW_p^{(1,2)}((0,T_0) \times \Omega)}
     &\leq 2^{1/p} \norm{u}_{\WW_p^{(1,2)}((0,T) \times \Omega)},
     \\
    \norm{\tilde u}_\infty
     &= \norm{u}_\infty.
   \end{align*}
  From here the assertion follows easily.
 \end{proof}

%

 \begin{theorem}[Local-in-time existence of strong solutions]
  Let $p > \frac{n + 2}{2}$ and assume that $\Omega \subseteq \R^n$ is a bounded domain of class $\partial \Omega \in \CC^2$. Then the fast-sorption--fast-surface-chemistry limit problem \eqref{eqn:FSFSC-limit'} admits a unique local-in-time strong solution, if
   \[
    \vec c_0
     \in \II_p^+(\Omega)
      := \{ \vec c_0 \in \BB_{pp}^{2-2/p}(\Omega; (0,\infty)^3): \,
         \ip{\vec e^k}{\bb D \partial_{\vec n} \vec c^0|_\Sigma} = 0,
    \quad
   \vec v^0|_\Sigma^{\vec \nu^{\Sigma,a}} = \exp(- \vec \nu^{\Sigma,a} \cdot \vec g(T)) \}.
   \]
  More precisely, for every reference initial datum $\vec c_0^\ast \in \II_p^+(\Omega)$, there are $T > 0$, $\varepsilon > 0$ and $C > 0$ such that the following statements hold true:
   \begin{enumerate}
    \item
     For all $\vec c_0 \in \II_p^+(\Omega)$ with $\norm{\vec c_0 - \vec c_0^\ast}_{\II_p(\Omega)} < \varepsilon$, there is a unique strong solution $\vec c \in \WW_p^{(1,2)}(J \times \Omega; (0, \infty)^N)$ of \eqref{eqn:FSFSC-limit'} for $J = [0,T]$.
    \item
     For any two initial data $\vec c_0, \tilde{\vec c}_0 \in \II_p^+(\Omega)$ with $\norm{\vec c_0 - \vec c_0^\ast}_{\BB_{pp}^{2-2/p}}, \norm{\tilde{\vec c}_0 - \vec c_0^\ast}_{\BB_{pp}^{2-2/p}} < \varepsilon$ and corresponding strong solutions $\vec c, \tilde {\vec c} \in \WW_p^{(1,2)}(J \times \Omega; \R^3)$ one has
      \[
       \norm{\vec c - \tilde {\vec c}}_{\WW_p^{(1,2)}(J \times \Omega)}
        \leq C \norm{\vec c_0 - \tilde {\vec c}_0}_{\WW_p^{2-2/p}(\Omega)}.
      \]
    \item
     Any strong solution $\vec c \in \WW_p^{(1,2)}(J \times \Omega)$ can be extended in a unique way to a maximal strong solution $\vec c: [0, T_\mathrm{max}) \times \Omega \rightarrow (0,\infty)^N$ (with $T_\mathrm{max} \in (0, \infty]$) with $\vec c \in \WW_p^{(1,2)}((0,T) \times \Omega; (0,\infty)^N)$ for every $T \in (0, T_\mathrm{max})$.
   \end{enumerate}
 \end{theorem}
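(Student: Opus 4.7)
The plan is to apply the contraction mapping principle in the space $\DD_0(T)$ of functions with zero time trace, using the $\LL_p$-maximal regularity of the linearised problem established in the previous subsection. First I would fix a reference function $\vec c^\ast \in \WW_p^{(1,2)}(J \times \Omega; (0,\infty)^N)$ with $\vec c^\ast|_{t=0} = \vec c_0^\ast$, obtained for instance via the retract that recovers a right inverse of the time trace $\WW_p^{(1,2)}(J \times \Omega) \to \BB_{pp}^{2-2/p}(\Omega)$, and adjusted to preserve positivity. Because $p > \frac{n+2}{2}$ forces $\WW_p^{(1,2)}(J \times \Omega) \hookrightarrow \CC(\overline{J} \times \overline{\Omega})$ and $\vec c_0^\ast$ is uniformly strictly positive, on a sufficiently short interval $J = (0,T)$ the reference $\vec c^\ast$ stays bounded below by a positive constant, so that the coefficient matrix $\bb C(\vec c^\ast|_\Sigma) = \diag(c_i^\ast|_\Sigma^{-1})$ carries the trace regularity required by the preceding maximal regularity theorem.

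Writing any candidate solution as $\vec c = \vec c^\ast + \vec v$, the problem \eqref{eqn:FSFSC-limit'} becomes a system for $\vec v$ with right-hand sides consisting of polynomial bulk nonlinearities in $\vec v + \vec c^\ast$, the residual bulk term $(\partial_t - \bb D \Delta)\vec c^\ast$ (which lies in $\LL_p$), a flux boundary source that vanishes at $t=0$ by the compatibility condition in $\II_p^+(\Omega)$, and a nonlinear Dirichlet-type residual of the form $[\bb C_a(\vec c^\ast + \vec v) - \bb C_a(\vec c^\ast)]\vec \nu^{\Sigma,a}\cdot(\vec v + \vec c^\ast)|_\Sigma$. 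Let $\Phi_T\colon \DD_0(T) \to \DD_0(T)$ be the operator sending $\vec v$ to the unique strong solution $\vec w$ of the linearised problem (around $\vec c^\ast$) with these data and the shifted initial value $\vec c_0 - \vec c_0^\ast$. By the maximal regularity theorem, $\Phi_T$ is well defined, and its fixed points are exactly the sought strong solutions. The heart of the argument is then to show that for $\varepsilon > 0$ (the radius in the $\BB_{pp}^{2-2/p}$-ball around $\vec c_0^\ast$) and $T > 0$ sufficiently small, $\Phi_T$ is a strict contraction on a closed ball in $\DD_0(T)$; this relies on the preceding lemma for a $T$-uniform embedding constant $\DD_0(T) \hookrightarrow \CC(\overline{J} \times \overline{\Omega})$, on the smoothness of the polynomial and reciprocal nonlinearities on bounded sets bounded away from zero, and on the algebra property of the anisotropic Sobolev-Slobodetskii space at smoothness $1 - \frac{1}{2p}$ — which is precisely what $p > \frac{n+2}{2}$ is imposed for. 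Positivity of the fixed point is inherited by continuity from the strict positivity of $\vec c^\ast$ combined with the smallness of $\vec v$ in $\CC$.

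Continuous dependence, assertion (2), follows from a standard parameter version of Banach's fixed point theorem applied to the family $\Phi_T$ as the initial datum varies, using that the map $\vec c_0 \mapsto \vec c_0 - \vec c_0^\ast$ extends Lipschitz-continuously from $\BB_{pp}^{2-2/p}(\Omega)$ into $\DD_0(T)$ through the same trace retract. For assertion (3), I would iterate the local result: once a solution $\vec c$ exists on $[0,T]$, any later time slice $\vec c(T') \in \BB_{pp}^{2-2/p}(\Omega;(0,\infty)^N)$ still belongs to $\II_p^+(\Omega)$ because the flux and Dirichlet-type boundary and compatibility conditions are propagated by \eqref{eqn:FSFSC-limit'} (the flux condition being satisfied as a trace identity of the PDE, the Dirichlet-type condition being an instantaneous constraint on $\Sigma$), so the local existence applies with $\vec c(T')$ as a new initial datum; gluing and uniqueness yield a unique maximal strong solution on $[0,T_{\mathrm{max}})$ for some $T_{\mathrm{max}} \in (0,\infty]$.

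The main obstacle, in my view, is the third step of the fixed-point verification: establishing a Lipschitz estimate with small constant for the nonlinear Dirichlet-type residual $[\bb C_a(\vec c^\ast + \vec v) - \bb C_a(\vec c^\ast)]\vec \nu^{\Sigma,a}\cdot(\vec v + \vec c^\ast)|_\Sigma$ in $\WW_p^{(1,2)\cdot(1-\nicefrac{1}{2p})}(J \times \Sigma; \R^{m^\Sigma})$. Here polynomial terms in $\vec v$ and reciprocals of $\vec v + \vec c^\ast$ must be controlled simultaneously in an anisotropic trace space, and smallness must be extracted from $T$ (not merely from the radius of the ball, since the nonlinearity does not vanish at $\vec v = 0$ when $\vec c_0 \neq \vec c_0^\ast$). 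This is precisely where the uniform-in-$T$ embedding $\DD_0(T) \hookrightarrow \CC$ from the preceding lemma, together with the pointwise lower bound on $\vec c^\ast + \vec v$, does the heavy lifting.
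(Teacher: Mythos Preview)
Your proposal is correct and follows essentially the same route as the paper: reduce to a fixed-point problem via the maximal-regularity solution operator of the linearised system, work on a ball in the zero-trace space $\DD_0(T)$, and extract contractivity from the $T$-uniform embedding into $\CC$ together with the algebra property of the boundary trace space $\WW_p^{(1,2)\cdot(1-\nicefrac{1}{2p})}$ and the uniform lower bound on $\vec c^\ast + \vec v$. The paper carries out exactly this, with a somewhat more explicit telescoping decomposition of the Dirichlet-type residual than you sketch.

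One small inconsistency worth cleaning up: you declare $\Phi_T\colon \DD_0(T)\to\DD_0(T)$ but then feed in the ``shifted initial value $\vec c_0 - \vec c_0^\ast$'', which would force the output out of $\DD_0(T)$ whenever $\vec c_0\neq\vec c_0^\ast$. The paper avoids this by extending the \emph{actual} initial datum $\vec c^0$ (not the reference $\vec c_0^\ast$) to the time-dependent $\vec c^\ast$, so that $\vec v(0)=0$ identically; uniformity of $T,\varepsilon$ over a $\BB_{pp}^{2-2/p}$-neighbourhood of $\vec c_0^\ast$ is then argued a posteriori from continuity of all constants in the estimates. Either fix works, but you should pick one and state it consistently.
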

 
 \begin{proof} 
  Let $\varepsilon > 0$ and initial data
   \[
    \vec c^0
     \in \II_p^\varepsilon(\Omega)
      := \{ \vec c^0: \, c_i^0 \geq \varepsilon \, (i = 1, \ldots, N) \, \text{ on } \overline{\Omega} \}
   \]
  be given.
  Let $\rho_0, T_0 > 0$ be such that
   \[
    \norm{\vec v}_\infty \leq \frac{\varepsilon}{3}
    \quad
    \text{for all }
    \vec v \in \DD_{\rho,T} := \{ \vec v \in \WW^{(1,2)}_p((0,T) \times \Omega;\R^N): \, \vec v(0,\cdot) = 0, \, \norm{\vec v}_{\WW^{(1,p)}} \leq \rho \},
    \,
    \rho \in (0, \rho_0], \, T \in (0, T_0].
   \]
  Moreover, let $\mathcal{E}: \, \BB^{2-2/p}_{pp}(\Omega;\R^N) \rightarrow \WW^{(1,2)}_p((0,T_0) \times \Omega; \R^N)$ be a linear continuous extension operator and w.l.o.g.\ assume that $\mathcal{E} \vec v \geq \frac{\varepsilon}{3}$ on $(0,T_0) \times \Omega$ whenever $\vec v \geq \varepsilon$ on $\Omega$.
  Then, in particular, $\mathcal{E} \vec c^0 + \vec v \geq \frac{\varepsilon}{3}$ for all $\rho \in (0, \rho_0]$, $T \in (0, T_0]$ and $\vec v \in \DD_{\rho,T}$ on $(0,T) \times \Omega$.
  \newline
  We are looking for a solution $\vec c \in \WW^{(1,2)}_p((0,T) \times \Omega; \R^N)$ of
   \begin{align*}
    \partial_t \vec c - \DD \Delta \vec c
     &= \vec r(\vec c)
     &&\text{in } (0,T) \times \Omega,
     \\
    - \vec e^k \cdot \bb D \partial_{\vec n} \vec c
     &= \vec 0
     &&\text{on } (0,T) \times \Sigma, \, k = 1, \ldots, n^\Sigma,
     \\
    \vec c^{\vec \nu^{\Sigma,a}}
     &= \kappa_a (> 0)
     &&\text{on } (0,T) \times \Sigma, \, a = 1, \ldots, m^\Sigma,
     \\
    \vec c(0,\cdot)
     &= \vec c^0
     &&\text{in } \Omega.
   \end{align*}
  We set $\vec c^\ast := \mathcal{E} \vec c^0 \in \WW^{(1,2)}_p((0,T_0) \times \Omega; \R^N)$ and $\vec v := \vec c - \vec c^\ast$.
  Then, $\vec v$ should be solution to the following initial-boundary value-problem:
   \begin{align*}
    \partial_t \vec v -  \bb D \Delta \vec v
     &= \vec r(\vec c^\ast + \vec v) - (\partial_t - \bb D \Delta) \vec c^\ast
     &&\text{in } (0,T) \times \Omega,
     \\
    - \vec e^k \cdot \bb D \partial_{\vec n} \vec v
     &= \vec e^k \cdot \cdot \bb D \partial_{\vec n} \vec c^\ast
     &&\text{on } (0,T) \times \Sigma, \, k = 1, \ldots, n^\Sigma,
     \\
    (\vec c^\ast + \vec v)^{\vec \nu^{\Sigma,a}}
     &= \kappa_a
     &&\text{on } (0,T) \times \Sigma, \, a = 1, \ldots, m^\Sigma,
     \tag{$\ast$}
     \label{ast}
     \\
    \vec v(0,\cdot)
     &= \vec 0
     &&\text{in } \Omega.
   \end{align*}
  The nonlinear boundary conditions can be expressed (using that $(\vec c^0)^{\vec \nu^{\Sigma,a}} = \kappa_a$) as
   \begin{align*}
    \bb C_a^0 \vec \nu^{\Sigma,a} \cdot \vec v
     &:= \sum_{i=1}^N \nu_i^{\Sigma,a} \frac{1}{c_i^0} v_i
     \\
     &= \sum_{i=1}^N \nu_i^{\Sigma,a} \big( \frac{1}{c_i^0} - \frac{1}{c_i^\ast + v_i} \big) v_i
      + \sum_{i=1}^N \nu_i^{\Sigma,a} \frac{1}{c_i^\ast + v_i} v_i
      \\
     &= \sum_{i=1}^N \nu_i^{\Sigma,a} \big( \frac{1}{c_i^0} - \frac{1}{c_i^\ast + v_i} \big) v_i
      + \frac{1}{(\vec c^\ast + \vec v)^{\vec \nu^{\Sigma,a}}} \sum_{i=1}^N \nu_i^{\Sigma,a} (\vec c^\ast + \vec v)^{\vec \nu^{\Sigma,a} - \vec e_i} v_i
      \\
     &= \sum_{i=1}^N \nu_i^{\Sigma,a} \big( \frac{1}{c_i^0} - \frac{1}{c_i^\ast + v_i} \big) v_i
      + \frac{1}{(\vec c^\ast + \vec v)^{\nu^{\Sigma,a}}} \big( (\vec c^\ast + \vec v)^{\vec \nu^{\Sigma,a}} - (\vec c^0)^{\vec \nu^{\Sigma,a}} - \sum_{i=1}^N \nu_i^{\Sigma,a} (\vec c^\ast + \vec v)^{\vec \nu^{\Sigma,a} - \vec e_i} v_i \big)
      \\
     &=: \delta \bb C_a(\vec c^\ast, \vec v) \vec \nu^{\Sigma,a} \cdot \vec v
      + \frac{1}{(\vec c^\ast + \vec v)^{\nu^{\Sigma,a}}} \big( (\vec c^\ast + \vec v)^{\vec \nu^{\Sigma,a}} - (\vec c^0)^{\vec \nu^{\Sigma,a}} - \bb C_a(\vec c^\ast + \vec v) \vec \nu^{\Sigma,a} \cdot \vec v \big).
   \end{align*}
  Then, $\vec v \in \DD_{\rho,T}$ is a solution $\vec v \in \WW^{(1,2)}_p((0,T) \times \Omega; \R^N)$ of \eqref{ast} if and only if
   \begin{align*}
    \partial_t \vec v - \bb D \Delta \vec v
     &= \vec r(\vec c^\ast + \vec v) - (\partial_t - \bb D \Delta) \vec c^\ast
     &&\text{in } (0,T) \times \Omega,
     \\
    - \vec e^k \cdot \bb D \partial_{\vec n} \vec v
     &= \vec e^k \cdot \bb D \partial_{\vec n} \vec c^\ast
     &&\text{on } (0,T) \times \Sigma, \, k = 1, \ldots, n^\Sigma
     \\
    \label{astast}
    \tag{$\ast\ast$}
    \bb C_a^0 \vec \nu^{\Sigma,a} \vec v
     &= \delta \bb C_a(\vec c^\ast, \vec v) \vec \nu^{\Sigma,a} \cdot \vec v
      \\ &\qquad
      + \frac{1}{(\vec c^\ast + \vec v)^{\vec \nu^{\Sigma,a}}} \big( (\vec c^\ast + \vec v)^{\vec \nu^{\Sigma}} - (\vec c^0)^{\vec \nu^{\Sigma,a}} - \bb C_a(\vec c^\ast + \vec v) \vec \nu^{\Sigma,a} \cdot \vec v \big)
     &&\text{on } (0,T) \times \Sigma, \, a = 1, \ldots, m^\Sigma,
     \\
    \vec v(0,\cdot)
     &= \vec 0
     &&\text{in } \Omega.
   \end{align*}
  Therefore, $\vec v \in \DD_{\rho,T}$ is a solution to \eqref{ast}, if and only if $\vec v \in \DD_{\rho,T}$ is a fix point of the map $\Phi: \DD_{\rho,T} \rightarrow \WW^{(1,2)}_p((0,T) \times \Omega; \R^N)$ defined as follows:
  For $\vec v \in \DD_{\rho,T}$ let $\Phi(\vec v) := \vec w$ be the unique solution to the inhomogeneous initial-boundary value-problem
   \begin{align*}
    \partial_t \vec w - \bb D \Delta \vec w
     &= \vec r(\vec c^\ast + \vec v) - (\partial_t - \bb D \Delta) \vec c^\ast
     =: \vec f(\vec c^\ast, \vec v)
     &&\text{in } (0,T) \times \Omega,
     \\
    - \vec e^k \cdot \bb D \partial_{\vec n} \vec w
     &= \vec e^k \cdot \bb D \partial_{\vec n} \vec c^\ast
     =: g_k(\vec c^\ast)
     &&\text{on } (0,T) \times \Sigma, \, k = 1, \ldots, n^\Sigma
     \\
    \bb C_a^0 \vec \nu^{\Sigma,a} \vec w
     &= \delta \bb C_a(\vec c^\ast, \vec v) \vec \nu^{\Sigma,a} \cdot \vec v
      \\ &\qquad
      + \frac{1}{(\vec c^\ast + \vec v)^{\vec \nu^{\Sigma,a}}} \big( (\vec c^\ast + \vec v)^{\vec \nu^{\Sigma}} - (\vec c^0)^{\vec \nu^{\Sigma,a}} - \bb C_a(\vec c^\ast + \vec v) \vec \nu^{\Sigma,a} \cdot \vec v \big)
      \\
      &=: h_a(\vec c^\ast, \vec v)
     &&\text{on } (0,T) \times \Sigma, \, a = 1, \ldots, m^\Sigma,
     \\
    \vec w(0,\cdot)
     &= \vec 0
     &&\text{in } \Omega.
   \end{align*}
  By $\LL_p$-maximal regularity of the linearised problem, the solution $\vec w \in \WW^{(1,2)}_p((0,T) \times \Omega; \R^N)$ exists and is uniquely determined, for every $\vec v \in \DD_{\rho,T}$.
  I.e.\ $\Phi$ is well-defined.
  Since the initial datum $\vec w(0,\cdot) = 0$ is zero for all the functions constructed in this way, the constant $C_T = C_{T_0} > 0$ in the maximal regularity estimate
   \[
    \norm{\Phi(v)}_{\WW^{(1,2)}_p((0,T) \times \Omega)}
     \leq C_{T_0} \big( \norm{\vec f(\vec c^\ast, \vec v)}_{\LL^p((0,T) \times \Omega)} + \norm{\vec g(\vec c^\ast)}_{\WW^{(1,2) \cdot (\nicefrac{1}{2} - \nicefrac{1}{2p})}_p((0,T) \times \Sigma)} + \norm{\vec h(\vec c^\ast, \vec v)}_{\WW^{(1,2) \cdot (1 - \nicefrac{1}{2p})}_p((0,T) \times \Sigma)}  \big)
   \]
  for $\vec v \in \DD_{\rho,T}$ can be chosen independently of $T \in (0, T_0]$ (using, e.g.\ a mirroring argument).
  \newline
  We will demonstrate that $\rho \in (0, \rho_0]$ and $T \in (0, T_0]$ can be chosen such that $\Phi$ is a contractive self-mapping on $\DD_{\rho,T}$, and hence attains a unique fixed point by the contraction mapping principle.
  To this end, we first show that $\Phi$ is for $\rho \in (0, \rho_0]$ and suitable $T \in (0, T_1]$ a mapping from $D_{\rho,T}$ into $D_{\rho,T}$.
  We will employ the notation $a \lesssim_{T_0} b$, if there is a constant $C = C(T_0)$ depending on $T_0$ such that $a \leq C(T_0) b$.
    \newline
     The term $\norm{\vec r(\vec c^\ast + \vec v)}_{\LL_p}$ can be handled analogously to reaction-diffusion-systems with linear boundary conditions as
      \[
       \norm{\vec r(\vec c^\ast + \vec v)}_{\LL_p}
        \leq C(\rho,T) \norm{\vec v}_{\WW_p^{(1,2)}},
        \quad
        \vec v \in \DD_{\rho,T},
      \]
     where $C(\rho,T) \rightarrow 0$ as $T \rightarrow 0+$ for $\rho \in (0, \rho_0]$.
    Next, 
      \[
       \norm{(\partial_t - \bb D \Delta) \vec c^\ast}_{\LL_p}
        \lesssim_{T_0} \norm{\vec c^\ast}_{\WW^{(1,2)}_p},
      \]
     which tends to zero as $T \rightarrow 0+$.
    Moreover, 
     \begin{align*}
      \norm{\vec e^k \cdot \bb D \partial_{\vec n} \vec c^\ast}_{\WW^{(1,2) \cdot (\nicefrac{1}{2} - \nicefrac{1}{2p})}_p}
       &\lesssim_{T_0} \norm{c^\ast}_{\WW^{(1,2)}_p},
       \\
      \norm{\delta \bb C_a(\vec c^\ast, \vec v) \vec \nu^{\Sigma,a} \cdot \vec v}_{\WW^{(1,2) \cdot (1 - \nicefrac{1}{2p})}}
       &\lesssim_{T_0} \norm{\delta \bb C_a(\vec c^\ast, \vec v)}_{\WW^{(1,2) \cdot (1 - \nicefrac{1}{2p})}_p} \norm{\vec v}_{\WW^{(1,2)}_p},
     \end{align*}
     where -- using the definition $\delta \bb C_a(\vec c^\ast, \vec v) = \diag (\nicefrac{1}{c_i^\ast + v_i})_i - \diag (\nicefrac{1}{c_i^0})_i \in \WW^{(1,2) \cdot (1 - \nicefrac{1}{2p})}( (0,T) \times \Sigma; \R^{N \times N})$ -- the former factor can be estimated as
      \[
       \norm{\delta \bb C_a(\vec c^\ast, \vec v)}_{\WW^{(1,2) \cdot (1 - \nicefrac{1}{2p})}_p}
        \lesssim_{\rho_0, T_0} \norm{(\vec c^\ast + \vec v) - \vec c^0}_{\WW^{(1,2) \cdot (1 - \nicefrac{1}{2p})}_p}
        \xrightarrow{T \rightarrow 0+} 0
      \]
     for $\rho \in (0, \rho_0]$, as $(\vec c^\ast + v)(0, \cdot) = \vec c^0$.
    \newline
     Splitting
      \[
       \frac{1}{(\vec c^\ast + \vec v)^{\vec \nu^{\Sigma,a}}} \big( (\vec c^\ast + \vec v)^{\vec \nu^{\Sigma}} - (\vec c^0)^{\vec \nu^{\Sigma,a}} - \bb C_a(\vec c^\ast + \vec v) \vec \nu^{\Sigma,a} \cdot \vec v \big)
        = \frac{1}{(\vec c^\ast + \vec v)^{\vec \nu^{\Sigma,a}}} \big( (\vec c^\ast + \vec v)^{\vec \nu^{\Sigma}} - (\vec c^0)^{\vec \nu^{\Sigma,a}} - \diag (\nicefrac{1}{c_i^\ast + v_i})_i \vec \nu^{\Sigma,a} \cdot \vec v \big)
      \]
     into a term
      \begin{align*}
       &\frac{1}{(\vec c^\ast + \vec v)^{\nu^{\Sigma,a}}} \big( (\vec c^\ast + \vec v)^{\vec \nu^{\Sigma,a}} - (\vec c^\ast)^{\vec \nu^{\Sigma,a}} \big)
        \\
        &= 1 - \frac{(\vec c^\ast)^{\vec \nu^{\Sigma,a}}}{(\vec c^\ast + \vec v)^{\vec \nu^{\Sigma,a}}}
        = 1 - \prod_{i=1}^N \frac{(c_i^\ast)^{\nu_i^{\Sigma,a}}}{(c_i^\ast + v_i)^{\nu_i^{\Sigma,a}}}
        \\
        &= 1 - \prod_{i: \alpha_i^{\Sigma,a} > 0} \frac{(c_i^\ast + v_i)^{\alpha_i^{\Sigma,a}}}{(c_i^\ast)^{\alpha_i^{\Sigma,a}}} \cdot \prod_{i: \beta_i^{\Sigma,a} > 0} \frac{(c_i^\ast)^{\beta_i^{\Sigma,a}}}{(c_i^\ast + v_i)^{\beta_i^{\Sigma,a}}}
        \\
        &= \sum_{k: \alpha_k^{\Sigma,a} > 0} \big( \prod_{i < k: \alpha_i^{\Sigma,a} > 0} \frac{(c_i^\ast + v_i)^{\alpha_i^{\Sigma,a}}}{(c_i^\ast)^{\alpha_i^{\Sigma,a}}} \big) \cdot \big( 1 - \frac{(c_k^\ast + v_k^\ast)^{\alpha_k^{\Sigma,a}}}{(c_k^\ast)^{\alpha_k^{\Sigma,a}}} \big)
         \\ &\quad
         + \sum_{k: \beta_k^{\Sigma,a} > 0} \big( \prod_{i: \alpha_i^{\Sigma,a} > 0} \frac{(c_i^\ast + v_i)^{\alpha_i^{\Sigma,a}}}{(c_i)^{\alpha_i^{\Sigma,a}}} \prod_{j < k: \beta_j^{\Sigma,a} > 0} \frac{(c_j^\ast)^{\beta_j^{\Sigma,a}}}{(c_j^\ast + v_j)^{\beta_j^{\Sigma,a}}} \big) \cdot \big( 1 - \frac{(c_k^\ast)^{\beta_k^{\Sigma,a}}}{(c_k^\ast + v_k)^{\beta_k^{\Sigma,a}}} \big),
      \end{align*}
     which can be estimated as
      \begin{align*}
       &\norm{\frac{1}{(\vec c^\ast + \vec v)^{\nu^{\Sigma,a}}} \big( (\vec c^\ast + \vec v)^{\vec \nu^{\Sigma,a}} - (\vec c^\ast)^{\vec \nu^{\Sigma,a}} \big)}_{\WW^{(1,2) \cdot (1 - \nicefrac{1}{2p})}_p((0,T) \times \Sigma)}
        \\
        &\lesssim_{T_0} \norm{\frac{1}{(\vec c^\ast + \vec v)^{\nu^{\Sigma,a}}} \big( (\vec c^\ast + \vec v)^{\vec \nu^{\Sigma,a}} - (\vec c^\ast)^{\vec \nu^{\Sigma,a}} \big)}_{\WW^{(1,2)}_p((0,T) \times \Omega)}
        \leq C_{\rho,T} \norm{v}_{\WW^{(1,2)}_p}
      \end{align*}
     for $C_{\rho,T} > 0$ such that $C_{\rho,T} \rightarrow 0+$ as $T \rightarrow 0+$, $\rho \in (0, \rho_0]$.
     Secondly, the term
      \[
       \bb C_a(\vec c^\ast + \vec v) \vec \nu^{\Sigma,a} \cdot \vec v
        = \sum_{i=1}^N \nu_i^{\Sigma,a} \frac{v_i}{c_i^\ast + v_i},
      \]
     can be handled using the estimate
      \begin{align*}
       \norm{\nicefrac{v_i}{c_i^\ast + v_i}}_{\WW^{(1,2) \cdot (1 - \nicefrac{1}{2p})}_p((0,T) \times \Sigma)}
        &\lesssim_{\rho_0, T_0} \norm{\nicefrac{v_i}{c_i^\ast + v_i}}_{\WW^{(1,2)}_p}
        \leq C_{\rho,T} \norm{v_i}_{\WW^{(1,2)}_p},
      \end{align*}
     also with a constant $C_{\rho,T}$ tending to zero as $T \rightarrow 0+$ and $\rho \in (0, \rho_0]$.
    \newline
  These considerations give that $T_1 \in (0,T_0]$ may be chosen such that for all $\rho \in (0,\rho_0]$ and $T \in (0, T_1]$, the map $\Phi$ maps $\DD_{\rho,T}$ into itself.
  Thus, it remains to show that $\Phi$ is contractive for suitable choice of $\rho, T$ from this range.
  \newline
  Let, therefore, $\rho \in (0, \rho_0]$ and $T \in (0, T_1]$ as well as $\vec u, \vec v \in \DD_{\rho,T}$ be given.
  Then $\vec w := \Phi(\vec v) - \Phi(\vec u) \in \WW^{(1,2)}_p((0,T) \times \Omega; \R^N)$ is the solution to the initial-boundary value-problem
   \begin{align*}
    \partial_t \vec w - \bb D \Delta \vec w
     &= \vec r(\vec c^\ast + \vec v) - \vec r(\vec c^\ast + \vec u)
     &&\text{in } (0,T) \times \Omega,
     \\
    - \vec e^k \cdot \bb D \partial_{\vec n} \vec w
     &= \vec 0
     &&\text{on } (0,T) \times \Sigma, \, k = 1, \ldots, n^\Sigma,
     \\
    \bb C_a^0 \vec \nu^{\Sigma,a} \cdot \vec w
     &= \delta \bb C_a(\vec c^\ast, \vec v) \vec \nu^{\Sigma,a} \cdot \vec v
      - \delta \bb C_a(\vec c^\ast, \vec u) \vec \nu^{\Sigma,a} \cdot \vec u
      \\ &\quad
      + \big( (\vec c^\ast + \vec v)^{- \vec \nu^{\Sigma,a}} - (\vec c^\ast + \vec u)^{- \vec \nu^{\Sigma,a}} \big) (\vec c^0)^{\vec \nu^{\Sigma,a}}
      \\
      &=: \hat h_a(\vec c, \vec v, \vec u)
      &&\text{on } (0,T) \times \Sigma, \, a = 1, \ldots, m^\Sigma,
      \\
     \vec w(0,\cdot)
      &= \vec 0
      &&\text{on } \Omega.
   \end{align*}
  With $\LL_p$-maximal regularity and $\vec w(0,\cdot) = \vec 0$ it holds again
   \[
    \norm{\vec w}_{\WW^{(1,2)}_p}
     \lesssim_{T_0} \norm{\vec r(\vec c^\ast + \vec v) - \vec r(\vec c^\ast + \vec u)}_{\LL_p} + \norm{\vec {\hat h}(\vec c^\ast, \vec v, \vec u)}_{\WW^{(1,2) \cdot (1 - \nicefrac{1}{2p}}_p},
     \quad
     T \in (0, T_1].
   \]
  Similar to the self-mapping property, the bulk chemistry term does not pose much problems and can be handled in an analogous way as above.
  For the term $\vec {\hat h}(\vec c^\ast, \vec v, \vec u)$, we derive the following estimates:
   \newline
     It holds
     \[
      \delta \bb C_a(\vec c^\ast, \vec v) \vec \nu^{\Sigma,a} \cdot \vec v
       - \delta \bb C_a(\vec c^\ast, \vec u) \vec \nu^{\Sigma,a} \cdot \vec u
       = [\delta \bb C_a(\vec c^\ast, \vec v) - \delta \bb C_a(\vec c^\ast, \vec u)] \vec \nu^{\Sigma,a} \cdot \vec v
        + \delta \bb C_a(\vec c, \vec u) \vec \nu^{\Sigma,a} \cdot (\vec v - \vec u),
     \]
    where the latter term is
     \begin{align*}
      \delta \bb C_a(\vec c^\ast, \vec u) \vec \nu^{\Sigma,a} \cdot (\vec v - \vec u)
       &= \big( \diag(\nicefrac{1}{c_i^\ast + v_i})_i - \diag(\nicefrac{1}{c_i^\ast}) \big) \vec \nu^{\Sigma,a} \cdot (\vec v - \vec u)
       \\
       &= \big( \diag(\nicefrac{1}{c_i^\ast(c_i^\ast + \vec u)})_i \diag(u_i)_i\vec \nu^{\Sigma,a} \big) \cdot (\vec v - \vec u)
     \end{align*}
    and may be estimated as
     \begin{align*}
      \norm{\delta \bb C_a(\vec c^\ast, \vec u) \vec \nu^{\Sigma,a} \cdot (\vec v - \vec u)}_{\WW^{(1,2) \cdot (1 - \nicefrac{1}{2p})}_p}
       &\leq C_{\rho,T} \norm{\vec v - \vec u}_{\WW^{(1,2)}_p},
     \end{align*}
    where $C_{\rho,T} \rightarrow 0$ as $T \rightarrow 0+$, $\rho \in (0, \rho_0]$. The first term may be estimated writing
     \begin{align*}
      [\delta \bb C_a(\vec c^\ast, \vec v) - \delta \bb C_a(\vec c^\ast, \vec u)] \vec \nu^{\Sigma,a} \cdot \vec v
       &= \big( \diag (\nicefrac{1}{(c_i^\ast + v_i)(c_i^\ast + u_i)})_i \diag(\vec v_i - \vec u_i)_i \vec \nu^{\Sigma,a} \big) \cdot \vec v.
     \end{align*}
    Also, the term
     \begin{align*}
      &\frac{1}{(\vec c^\ast + \vec v)^{\vec \nu^{\Sigma,a}}} - \frac{1}{(\vec c^\ast + \vec u)^{\vec \nu^{\Sigma,a}}}
       \\
       &= \frac{1}{(\vec c^\ast + \vec v)^{\vec \beta^{\Sigma,a}}} (\vec c^\ast + \vec u)^{\vec \beta^{\Sigma,a}} \big( (\vec c^\ast + \vec v)^{\vec \beta^{\Sigma,a}} - (\vec c^\ast + \vec u)^{\vec \beta^{\Sigma,a}} \big) \big( (\vec c^\ast + \vec u)^{\vec \alpha^{\Sigma,a}} - (\vec c^\ast - \vec v)^{\vec \alpha^{\Sigma,a}} \big)
     \end{align*}
    can be estimated as
     \[
      \norm{\frac{1}{(\vec c^\ast + \vec v)^{\vec \nu^{\Sigma,a}}} - \frac{1}{(\vec c^\ast + \vec u)^{\vec \nu^{\Sigma,a}}}}_{\WW^{(1,2) \cdot (1 - \nicefrac{1}{2p}}_p}
       \leq C_{\rho,T} \norm{\vec v - \vec u}_{\WW^{(1,2)}_p}
     \]
    with $C_{\rho,T} \rightarrow 0$ as $T \rightarrow 0+$, $\rho \in (0, \rho_0]$.
   \newline
   The terms
    \begin{align*}
     &\frac{1}{(\vec c^\ast + \vec v)^{\vec \nu^{\Sigma,a}}} \bb C_a(\vec c^\ast + \vec v) \vec \nu^{\Sigma,a} \cdot \vec v - \frac{1}{(\vec c^\ast + \vec u)^{\vec \nu^{\Sigma,a}}} \bb C_a(\vec c^\ast + \vec u) \vec \nu^{\Sigma,a} \cdot \vec u
     \\
     &= \big( \frac{1}{(\vec c^\ast + \vec v)^{\vec \nu^{\Sigma,a}}} - \frac{1}{(\vec c^\ast + \vec u)^{\vec \nu^{\Sigma,a}}} \big)
      \\ &\quad
      + \frac{1}{(\vec c^\ast + \vec u)^{\vec \nu^{\Sigma,a}}} \big( \bb C_a(\vec c^\ast + \vec v) - \bb C_a(\vec c^\ast + \vec u) \big) \vec \nu^{\Sigma,a} \cdot \vec u
      \\ &\quad
      + \frac{1}{(\vec c^\ast + \vec u)^{\vec \nu^{\Sigma,a}}} \bb C_a(\vec c^\ast + \vec u) \vec \nu^{\Sigma,a} \cdot (\vec v - \vec u)
    \end{align*}
   can be handled analogously: Each of the summands can in $\WW^{(1,2) \cdot (1 - \nicefrac{1}{2p})}_p$-norm be estimated by $C_{\rho,T} \norm{\vec v - \vec u}_{\WW^{(1,2)}_p}$ for some $C_{\rho,T}$ tending to zero as $T \rightarrow 0+$.
   \newline
   As a result, choosing $\rho \in (0, \rho_0]$ arbitrary and $T \in (0, T_1]$ sufficiently small, say $T \in (0, T_2]$ for some $T_2 \in (0,T_1]$, $\Phi: \DD_{\rho,T} \rightarrow \DD_{\rho,T}$ is a strict contraction, and, therefore, by the contraction mapping principle has a unique fixed point $\vec v^\ast = \Phi(\vec v^\ast) \in \DD_{\rho,T}$.
   Then, $\vec c := \vec c^\ast + \vec v^\ast$ is the unique solution of the fast sorption and fast surface chemistry limit reaction diffusion system.
   \newline
   Moreover, for $\vec c^{\ast,0} \in \II_p^\varepsilon(\Omega)$, $\eta > 0$ sufficiently small and initial data $\vec c^0 \in \BB_\eta(\vec c^{\ast,0}) \subseteq \II_p^\varepsilon(\Omega)$ close to $\vec c^{\ast,0}$, there is a common choice of parameters $\rho_0$ and $T_2$ to make the respective maps $\Phi = \Phi^{\vec c^0}$ strictly contractive self-mappings for any $\rho \in (0,\rho_0]$ and $T \in (0, T_2]$, so that for all these initial data the solution exists and is unique at least on the time interval $(0, T_2)$.
   Also it can be seen that as the maps $\Phi^{\vec c^0}$ continuously depend on the initial datum, so do the fixed points, hence the solutions to the fast sorption and fast surface chemistry reaction-diffusion-limit system.
 \end{proof}

From the proof one can extract blow-up criteria for solutions which are not global in time.

 \begin{corollary}[Blow-up criterion]
  Either $T_\mathrm{max} = \infty$ (global existence), or $T_\mathrm{max} < \infty$ and $\norm{\vec c}_{\BB_{pp}^{2-2/p}(\Omega;\R^N)} \rightarrow \infty$ (blow-up) or $\min_{\vec z \in \overline{\Omega}} c_i(t,\vec z) \rightarrow 0$ (degeneration) for some $i \in \{1, \ldots, N\}$ as $t \rightarrow T_\mathrm{max}$.
 \end{corollary}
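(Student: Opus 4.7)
The plan is to argue by contrapositive using the uniform local existence time from the previous theorem. Suppose $T_\mathrm{max} < \infty$ and assume neither the blow-up nor the degeneration alternative occurs, so that there exist $\varepsilon > 0$ and $M > 0$ with
\[
 \min_{\vec z \in \overline{\Omega}} c_i(t,\vec z) \geq \varepsilon \quad (i = 1, \ldots, N), \qquad \norm{\vec c(t,\cdot)}_{\BB_{pp}^{2-2/p}(\Omega;\R^N)} \leq M, \qquad t \in [0, T_\mathrm{max}).
\]
The aim is to derive a contradiction by continuing $\vec c$ strictly beyond $T_\mathrm{max}$.

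First, I would verify that for every $t \in [0, T_\mathrm{max})$ the slice $\vec c(t,\cdot)$ belongs to $\II_p^+(\Omega)$: strict positivity is given by the lower bound $\varepsilon$, and the compatibility conditions $\ip{\vec e^k}{\bb D \partial_{\vec n} \vec c|_\Sigma} = 0$ and $\vec c|_\Sigma^{\vec \nu^{\Sigma,a}} = \kappa_a$ are automatic since $\vec c$ is a strong solution of \eqref{eqn:FSFSC-limit'} on $(0, T_\mathrm{max}) \times \Sigma$. The standard embedding $\WW_p^{(1,2)}(J \times \Omega) \hookrightarrow \CC(\overline J; \BB_{pp}^{2-2/p}(\Omega))$ further ensures that $\vec c(t,\cdot)$ is well-defined and continuous in $t$ with values in $\BB_{pp}^{2-2/p}(\Omega; \R^N)$.

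Next, I would revisit the proof of the local existence theorem and track how its parameters depend on the initial datum. The contraction estimates there involve $\vec c^0$ only through (i) a positive lower bound on the components, entering via the denominators of $\bb C_a^0$, of $\delta \bb C_a$ and of the factors $(\vec c^\ast + \vec v)^{-\vec \nu^{\Sigma,a}}$, and (ii) an upper bound on $\norm{\vec c^0}_{\BB_{pp}^{2-2/p}}$, which controls the extension $\vec c^\ast = \mathcal{E} \vec c^0$. Consequently, there exist constants $\rho_\ast, \tau > 0$ depending only on $\varepsilon$ and $M$ (and on $p$, $n$, $\Omega$) such that for \emph{any} datum $\vec u \in \II_p^+(\Omega)$ satisfying $\min_i u_i \geq \varepsilon/2$ and $\norm{\vec u}_{\BB_{pp}^{2-2/p}} \leq 2M$, the system \eqref{eqn:FSFSC-limit'} admits a unique strong solution on $(0, \tau) \times \Omega$.

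Choosing $t_0 \in (T_\mathrm{max} - \tau, T_\mathrm{max})$ and taking $\vec c(t_0,\cdot) \in \II_p^+(\Omega)$ as a new initial datum then yields a strong solution on $[t_0, t_0 + \tau]$; by uniqueness it glues with $\vec c|_{[0,t_0]}$ to a strong solution on an interval strictly containing $[0, T_\mathrm{max}]$, contradicting maximality. The main obstacle is precisely the uniformity of $\tau$: one has to go through each step of the contraction estimates in the preceding proof -- the reaction term $\vec r(\vec c^\ast + \vec v)$, the boundary perturbation $\delta \bb C_a(\vec c^\ast, \vec v)$, the factor $(\vec c^\ast + \vec v)^{-\vec \nu^{\Sigma,a}}$, and the telescoping expansion used to estimate $(\vec c^\ast + \vec v)^{\vec \nu^{\Sigma,a}} - (\vec c^\ast)^{\vec \nu^{\Sigma,a}}$ -- and confirm that the constants degrade only through the two scalar quantities $\varepsilon$ and $M$, so that a common contraction radius and existence time may indeed be selected over the whole admissible set of traces $\{\vec c(t,\cdot): t \in [0, T_\mathrm{max})\}$.
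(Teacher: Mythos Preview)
Your proposal is correct and follows precisely the route the paper intends: the paper does not give a separate proof of the corollary but simply states that the blow-up criterion ``can be extracted from the proof'' of the local existence theorem, and your contrapositive argument---bounding the admissible traces uniformly by $\varepsilon$ and $M$ and then verifying that the contraction parameters $\rho$ and $T$ in the fixed-point argument depend only on these two scalars---is exactly what that extraction amounts to. Your explicit listing of the places where $\varepsilon$ and $M$ enter the estimates is in fact more detailed than what the paper provides.
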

 
 \begin{remark}
  The inclusion of the case $\min c_i(t,\vec z) \rightarrow 0$ for some $i$ seems to be a bit out of place here, but is due to the chosen linearisation around the reference function. To have enough regularity for $\bb C = \diag (\vec c|_\Sigma)^{-1}$ one needs uniform positivity of the solution candidate $\vec c$, thus on the initial datum $\vec c^0$. Therefore, this approach breaks down as $\min c_i(t,\cdot) \rightarrow 0$.
 \end{remark}

 \subsubsection{A-priori bounds on the strong solution of the fast sorption--surface-chemistry--transmission model}

 In the previous subsection, it has been noticed that a bound on the phase space norm $\norm{\cdot}_{\BB_{pp}^{2-2/p}}$ is enough for establishing global existence of a strong solution. To derive such a bound, however, is a delicate matter, and it is not clear whether global existence holds true in all cases.
 On the other hand, for some weaker norms at least a-priori bounds can be established \emph{for free}.
 The derivation of these a-priori bounds is based on the parabolic maximum principle and entropy considerations, highlighting the fruitful interplay between mathematics and physics, and will be presented in this subsection.
 
 \begin{theorem}[A-priori bounds on the strong solution]
  Let $\vec c^0 \in \II_p^+(\Omega) \cap \CC^2(\overline{\Omega}; \R^N)$ and $\vec c \in \CC^{(1,2)}([0, T_\mathrm{max}) \times \overline{\Omega};\R_+^N)$ be a maximal classical solution to the fast-surface-chemistry--fast-sorption--fast-transmission limit problem
   \begin{align*}
    \partial_t \vec c - \bb D \Delta \vec c
     &= \vec r(\vec c),
     &&t \geq 0, \, \vec z \in \Omega
     \\
    \vec r^\Sigma(\vec c^\Sigma)
     &= \vec 0,
     &&t \geq 0, \vec z \in \Sigma
     \\
    \vec s^\Sigma(\vec c, \vec c^\Sigma)
     &= 0,
     &&t \geq 0, \vec z \in \Sigma
     \\
    - \ip{\vec e^k}{\bb D \partial_{\vec n} \vec c}
     &=  \vec 0,
     &&k = 1, \ldots, n^\Sigma, \, t \geq 0, \vec z \in \Sigma
     \\
    \vec c(0,\cdot)
     &= \vec c^0,
     &&\vec z \in \overline{\Omega}.    
   \end{align*}
  Further, assume that there is a conserved quantity with strictly positive entries, i.e.\ there is
   \[
    \vec e \in (0, \infty)^N \cap \{\vec \nu^a: a = 1, \ldots, m\}^\bot \cap \{\vec \nu^{\Sigma,a}: a = 1, \ldots, m^\Sigma\}^\bot.
   \]
  Then, for every $T_0 \in (0, T_\mathrm{max}] \cap \R$ there is $C = C(T_0) > 0$, also depending on the initial data $\vec c^0$, such that the following a-priori bounds hold true:
   \begin{enumerate}
    \item
     $\LL_\infty^t \LL_1^{\vec z}$--a-priori estimate:
      \[
       \sup_{t \in [0, T_0)} \norm{\vec c(t,\cdot)}_{\LL_1(\Omega;\R^N)}
        \leq C \norm{\vec c^0}_{\LL_1(\Omega; \R^N)},
      \]
     where the constant can actually be chosen independent of $\vec c^0$ and $T_0$, but only depends on the ratio between the smallest and largest entry of $\vec e \in (0, \infty)^N$;
    \item
     $\LL_1^t \LL_\infty^{\vec z}$--a-priori estimate:
      \[
       \sup_{\vec z \in \overline{\Omega}} \norm{c(\cdot,\vec z)}_{\LL_1([0,T_0);\R^N)}
        \leq C;
      \]
    \item
     $\LL_2^t \LL_2^{\vec z}$--a-priori estimate:
      \[
       \norm{\vec c}_{\LL_2([0,T_0) \times \Omega; \R^N)}
        \leq C;
      \]
    \item
     Moreover, the following \emph{entropy identity} holds true:
      \begin{align*}
       &\int_\Omega c_i(t,\vec z) (\mu_i^0 + \ln \vec c_i(t,\vec z) - 1) \dd \vec z
        \\ &\quad
        + \int_0^t \int_\Omega \sum_{i=1}^N d_i \frac{\abs{\nabla c_i(s,\vec z)}^2}{c_i(s,\vec z)} \dd \vec z \dd s
        + \sum_{a=1}^m \int_0^t \int_\Omega \big( \sum_{i=1}^N \ln(c_i) \nu_i^a \big) \left(\exp \big( \sum_{i=1}^N \ln(c_i) \nu_i^a \big) - 1 \right) \dd \vec z \dd s
        \\ &\quad
       = \int_\Omega c_i^0(\vec z) (\mu_i^0 \ln c_i^0(\vec z) - 1) \dd \vec z,
        \quad
        t \in [0,T_0).
      \end{align*}
   \end{enumerate}
 \end{theorem}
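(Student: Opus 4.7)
The four a-priori bounds all arise from testing the bulk equation against carefully chosen vectors or functions and exploiting the built-in orthogonality structure of the problem. The decisive algebraic fact I would use throughout is that, under the detailed balance assumption, the surface stoichiometric vectors $\vec\nu^{\Sigma,1},\ldots,\vec\nu^{\Sigma,m^\Sigma}$ and the conserved quantities $\vec e^1,\ldots,\vec e^{n^\Sigma}$ together span $\R^N$ and are mutually orthogonal by construction. For (1), the plan is to take the inner product of the bulk PDE with the strictly positive conserved vector $\vec e$ and integrate over $\Omega$. The reaction term $\int_\Omega \vec e\cdot\vec r(\vec c)\dd\vec z$ vanishes identically because $\vec e\perp\vec\nu^a$ for every bulk reaction. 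Integrating the diffusion part by parts yields the boundary integral $\int_\Sigma \vec e\cdot(\bb D\partial_{\vec n}\vec c)\dd\sigma$; since $\vec e\perp\vec\nu^{\Sigma,a}$ for every surface reaction, one has $\vec e\in\lin\{\vec e^k\}$, hence the integrand vanishes pointwise by the flux conditions \eqref{eqn:condensed_flux_bc}. Therefore $t\mapsto\int_\Omega \vec e\cdot\vec c(t,\cdot)\dd\vec z$ is a conserved quantity, and positivity of $\vec c$ together with $0<\min_i e_i\leq\max_i e_i<\infty$ produces the $\LL_\infty^t\LL_1^{\vec z}$ estimate with constant $(\min_i e_i)^{-1}\max_i e_i$, independent of $\vec c^0$ and $T_0$.

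For (4), which is the analytic heart of the theorem, I would multiply the $i$-th bulk equation by the chemical potential $\mu_i=\mu_i^0+\ln c_i$, sum over $i$, and integrate over $\Omega$. The time-derivative term rearranges into $\partial_t\int_\Omega\sum_i c_i(\mu_i^0+\ln c_i-1)\dd\vec z$. Integration by parts of $-\sum_i d_i(\Delta c_i)\mu_i$ produces the Fisher-information-type dissipation $\int_\Omega\sum_i d_i|\nabla c_i|^2/c_i\dd\vec z$ together with a boundary term $-\int_\Sigma(\bb D\partial_{\vec n}\vec c)\cdot\vec\mu\dd\sigma$. The key step is the vanishing of this boundary term: since $\{\vec e^k\}\cup\{\vec\nu^{\Sigma,a}\}$ is a basis of $\R^N$, the flux conditions force $\bb D\partial_{\vec n}\vec c\in\lin\{\vec\nu^{\Sigma,a}\}$ on $\Sigma$, while the surface-chemistry relations $\exp(\vec\nu^{\Sigma,a}\cdot\vec\mu)=1$ -- with $\vec\mu$ real-valued -- give $\vec\nu^{\Sigma,a}\cdot\vec\mu=0$ pointwise on $\Sigma$, so that the two factors are mutually orthogonal. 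The reaction term $\int_\Omega\vec r\cdot\vec\mu\dd\vec z$ is then recast into the non-negative form displayed in the statement by applying the algebraic identity $(e^x-e^y)(y-x)\leq 0$ with $x=\vec\alpha^a\cdot\vec\mu$, $y=\vec\beta^a\cdot\vec\mu$, using the detailed balance relation $k_a^f/k_a^b=\exp(\vec\nu^a\cdot\vec\mu^0)$ and the substitution $\mu_i=\mu_i^0+\ln c_i$. Integration in $t$ then gives the entropy identity.

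The estimates (2) and (3) I would obtain from (1) and (4) by parabolic Gagliardo--Nirenberg-type interpolation. The dissipation in (4) controls $\nabla\sqrt{c_i}$ in $\LL_2(J\times\Omega)$, since $d_i|\nabla c_i|^2/c_i=4d_i|\nabla\sqrt{c_i}|^2$, while (1) translates into $\sqrt{c_i}\in\LL_\infty(J;\LL_2(\Omega))$. Parabolic Gagliardo--Nirenberg then furnishes mixed space-time integrability of $\sqrt{c_i}$, and hence of $c_i$, from which the $\LL_2^t\LL_2^{\vec z}$ bound follows directly; the $\LL_1^t\LL_\infty^{\vec z}$ estimate is then extracted via an additional spatial Sobolev embedding applied at each fixed time together with H\"older in $t$, or alternatively via a duality argument against the dual linear heat problem with vanishing normal flux. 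The main obstacle I anticipate is the boundary-integral cancellation in the entropy step: one must invoke the spanning property of $\{\vec e^k\}\cup\{\vec\nu^{\Sigma,a}\}$ -- which rests on the detailed balance/linear-independence hypothesis -- in a sharp way, and rule out the complex-logarithm ambiguity in $\exp(\vec\nu^{\Sigma,a}\cdot\vec\mu)=1$, i.e.\ argue that $\vec\nu^{\Sigma,a}\cdot\vec\mu=0$ rather than merely $\vec\nu^{\Sigma,a}\cdot\vec\mu\in 2\pi\ii\Z$, using that $\vec\mu$ is real-valued on $\Sigma$.
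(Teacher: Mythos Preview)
Your treatment of (1) and (4) is correct and matches the paper exactly: test against $\vec e$ for (1), test against $\vec\mu$ for (4), and in both cases the boundary contribution vanishes because $\bb D\partial_{\vec n}\vec c|_\Sigma\in\lin\{\vec\nu^{\Sigma,a}\}$ while $\vec e$ and $\vec\mu|_\Sigma$ lie in the orthogonal complement (the latter from $\vec\nu^{\Sigma,a}\cdot\vec\mu=0$ on $\Sigma$, which you correctly justify using that $\vec\mu$ is real-valued).

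The gap is in your proposed route to (2) and (3). Deducing the $\LL_2((0,T_0)\times\Omega)$ bound from the entropy dissipation via Gagliardo--Nirenberg only gives $c_i\in\LL_{(n+2)/n}((0,T_0)\times\Omega)$: the Fisher information controls $\sqrt{c_i}\in\LL_2(J;\WW_2^1(\Omega))$, while (1) gives $\sqrt{c_i}\in\LL_\infty(J;\LL_2(\Omega))$, and the standard parabolic embedding then yields $\sqrt{c_i}\in\LL_{2(n+2)/n}$, hence $c_i\in\LL_{(n+2)/n}$. This falls strictly short of $\LL_2$ for $n\geq 3$. Your proposal for (2) is too vague to assess, but $\LL_1^t\LL_\infty^{\vec z}$ is a strong pointwise-in-space statement that does not follow from the integral bounds you have available by any standard interpolation.

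The paper instead derives (2) and (3) directly, without using the entropy identity at all. For (2) one introduces the time-integrated quantity $w(t,\vec z)=\int_0^t\ip{\bb D\vec c(s,\vec z)}{\vec e}\dd s$, which satisfies $\partial_t w\leq d_\mathrm{max}(\Delta w+\ip{\vec c^0}{\vec e})$ with $\partial_{\vec n}w=0$ on $\Sigma$ and $w(0,\cdot)=0$; the parabolic maximum principle then bounds $w$, and hence $\int_0^{T_0}\ip{\vec c}{\vec e}\dd t$, uniformly in $\vec z$. For (3) one uses Pierre's $\LL_2$-duality trick: write $\ip{\vec c}{\vec e}=\Delta w+\ip{\vec c^0}{\vec e}$, multiply by $\ip{\bb D\vec c}{\vec e}$, integrate over $(0,T)\times\Omega$, and integrate by parts; the boundary term vanishes by the flux condition, the gradient term has a sign, and what remains is bounded by the $\LL_\infty^t\LL_1^{\vec z}$ estimate from (1). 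Both arguments are dimension-independent.
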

 \begin{proof}
  \emph{$\LL_\infty^t \LL_1^{\vec z}$--a-priori estimate:}
  Since $\vec e \in (0, \infty)^N$ is a conserved quantity for both the bulk and surface chemistry, $\ip{\vec r(\vec c)}{\vec e} = \ip{\vec r^\Sigma(\vec c)}{\vec e} = 0$ for all values of $\vec c$.
  Thus, using regularity properties of parameter-dependent integrals and the divergence theorem, for every $t \in [0,T_0)$ it holds that 
   \begin{align*}
    \frac{\dd}{\dd t} \int_\Omega \ip{\vec c(s,\vec z)}{\vec e} \dd \vec z
     &= \int_\Omega \ip{\partial_t \vec c(t,\vec z)}{\vec e} \dd \vec z
     = \int_\Omega \ip{\bb D \Delta \vec c(t,\vec z)}{\vec e} \dd \vec z
      + \int_\Omega \ip{\vec r(\vec c(t,\vec z))}{\vec e} \dd \vec z
      \\
     &= \int_\Omega \ip{\bb D \Delta \vec c(t,\vec z)}{\vec e} \dd \vec z
     = \int_{\partial \Omega} \ip{\bb D \partial_{\vec n} \vec c(t,\vec z)}{\vec e} \dd \sigma(\vec z)
     = 0.
   \end{align*} 
  As a result,
   \[
    \int_\Omega \ip{\vec c(t,\vec z)}{\vec e} \dd \vec z
     = \int_\Omega \ip{\vec c^0(\vec z)}{\vec e} \dd \vec z,
     \quad
     t \in [0, T_0)
   \]
  and, since $\vec e \in (0, \infty)^N$, the map $c \mapsto \int_\Omega \abs{\ip{\vec c}{\vec e}} \dd \vec z$ defines a norm which is equivalent to the standard $\LL_1$-norm on the Lebesgue space $\LL_1(\Omega;\R^N)$.
  More precisely,
   \[
    \norm{\vec c(t,\cdot)}_{\LL_1(\Omega;\R^N)}
     \leq \frac{1}{\min_i e_i} \int_\Omega \ip{\vec c^0(\vec z)}{\vec e} \dd \vec z
     \leq \frac{\max_i e_i}{\min_i e_i} \norm{\vec c^0}_{\LL_1(\Omega;\R^N)},
   \]
  establishing the first a-priori estimate for
   \[
    C
     = \frac{\max_i e_i}{\min_i e_i}
   \]
  independent of $T_0 > 0$ and the initial datum $\vec c^0$.
  \newline
  \emph{$\LL_1^t \LL_\infty^{\vec z}$--a-priori estimate:}
  To derive the $\LL_1 \LL_\infty$-a-priori bound, let us consider the function $w: [0, T_0) \times \overline{\Omega} \rightarrow [0, \infty)$ defined by
   \[
    w(t,\vec z)
     = \int_0^t \ip{\bb D \vec c(s,\vec z)}{\vec e} \dd s,
     \quad
     t \in [0,T_0), \, \vec z \in \overline{\Omega}.
   \]
  As a parameter integral of a $\CC^{(1,2)}$-function, $w$ has the regularity $w \in \CC^2([0,T_0) \times \overline{\Omega})$ and using elementary results on parameter-dependent integrals, the evolution equation \eqref{eqn:FSFSC-limit} and the assumption that $\vec e$ is a conserved quantity, we establish the estimate
   \begin{align*}
    \partial_t w(t,\vec z)
     &= \ip{\bb D \vec c(t,\vec z)}{\vec e}
     \leq d_\mathrm{max} \ip{\vec c(t,\vec z)}{\vec e}
     \\
     &= d_\mathrm{max} \left( \int_0^t \ip{\partial_t \vec c(s,\vec z)}{\vec e} \dd s + \ip{\vec c^0(\vec z)}{\vec e} \right)
     \\
     &= d_\mathrm{max} \left( \int_0^t \ip{\bb D \Delta \vec c(s,\vec z)}{\vec e} + \ip{\vec r(\vec c(s,\vec z))}{\vec e} \dd s + \ip{\vec c^0(\vec z)}{\vec e} \right)
     \\
     &= d_\mathrm{max} \left( \int_0^t \ip{\bb D \Delta \vec c(s,\vec z)}{\vec e} \dd s + \ip{\vec c^0(\vec z)}{\vec e} \right)
     \\
     &= d_\mathrm{max} \left( \Delta w(t,\vec z) + d_\mathrm{max} \ip{\vec c^0(\vec z)}{\vec e} \right),
     &&t \in [0,T_0), \, \vec z \in \Omega
     \\
    \partial_{\vec n} w(t,\vec z)
     &= \int_0^t \ip{\bb D \partial_{\vec n} \vec c(s,\vec z)}{\vec e} \dd s
     = 0,
     &&t \in [0, T_0), \, \vec z \in \partial \Omega
     \\
    w(0,\vec z)
     &= 0,
     &&\vec z \in \overline{\Omega}.
   \end{align*}
  Therefore, $w \geq 0$ satisfies the system of differential inequalities
   \begin{align*}
    \partial_t w - d_\mathrm{max} \Delta w
     &\leq d_\mathrm{max} \ip{\vec c^0}{\vec e},
     &&t \in [0, T_0), \, \vec z \in \Omega
     \\
    \partial_{\vec n} w
     &= 0,
     &&t \in [0, T_0), \, \vec z \in \partial \Omega
     \\
    w(0,\vec z)
     &= 0,
     &&\vec z \in \Omega.
   \end{align*}
  From the parabolic maximum principle for differential inequalities, it then follows that there is $C > 0$ (depending on $T_0$ and $\vec c^0$) such that
   \[
    0 \leq w(t,\vec z) \leq C,
     \quad
     t \in [0, T_0), \, \vec z \in \Omega
   \]
  and, consequently, one finds that
   \[
    \norm{\vec c(\cdot,\vec z)}_{\LL_1([0,T_0);\R^N)} \leq C,
     \quad
     \vec z \in \Omega.
   \]
  \emph{$\LL_2^t \LL_2^{\vec z}$--a-priori estimate:} 
  For the $\LL_2$-estimate, let us fix $T \in (0,T_0)$.
  Employing integration by parts, Fubini's theorem, the no-flux boundary conditions on the conserved part and the fundamental theorem of calculus, we find for the integral
   \begin{align*}
    &\int_0^T \int_\Omega (\ip{\bb D \vec c}{\vec e}) (\ip{\vec c}{\vec e}) \dd \vec z \dd t
     \\
     \\
     &= \int_0^T \int_\Omega (\ip{\bb D \vec c(t,\vec z)}{\vec e}) \left( \Delta \int_0^t \ip{\bb D \vec c(s,\vec z)}{\vec e} \dd s + (\ip{\bb D \vec c(t,\vec z)}{\vec e}) (\ip{\vec c^0(\vec z)}{\vec e}) \right) \dd \vec z \dd t
     \\
     &= \int_0^T \int_\Omega (\ip{\bb D \vec c(t,\vec z)}{\vec e}) \Delta \int_0^t \ip{\bb D \vec c(s,\vec z)}{\vec e} \dd s \dd \vec z \dd t
      \\ &\quad
      + \int_0^T \int_\Omega (\ip{\bb D \vec c(t,\vec z)}{\vec e}) (\ip{\vec c^0(\vec z)}{\vec e}) \dd \vec z \dd t
      \\
     &= - \int_\Omega \int_0^T \nabla (\ip{\bb D \vec c(t,\vec z)}{\vec e}) \cdot \nabla \int_0^t \ip{\bb D \vec c(s,\vec z)}{\vec e} \dd s \dd t \dd \vec z
      \\ &\quad
      + \int_0^T \int_\Sigma (\ip{\bb D \vec c(t, \vec z)}{\vec e}) \int_0^t \ip{\bb D \partial_{\vec n} \vec c(s, \vec z)}{\vec e} \dd s \dd \sigma(\vec z) \dd t
      \\ &\quad
      + \int_0^T \int_\Omega (\ip{\bb D \vec c(t,\vec z)}{\vec e}) (\ip{\vec c^0(\vec z)}{\vec e}) \dd \vec z \dd t
      \\
     &= - \frac{1}{2} \int_\Omega \abs{\int_0^T \nabla \ip{\bb D \vec c(t,\vec z)}{\vec e} \dd t}^2 \dd \vec z
      + \int_0^T \int_\Omega (\ip{\bb D \vec c(t,\vec z)}{\vec e}) (\ip{\vec c^0(\vec z)}{\vec e}) \dd \vec z \dd t
      \\
     &\leq \int_0^T \int_\Omega (\ip{\bb D \vec c(t,\vec z)}{\vec e}) (\ip{\vec c^0(\vec z)}{\vec e}) \dd \vec z \dd t
      \\
     &\leq \frac{d_\mathrm{max}}{d_\mathrm{min}} T \norm{ \ip{\vec c^0}{\vec e}}_{\LL_\infty(\Omega)} \norm{ \ip{\bb D \vec c^0}{\vec e}}_{\LL_1(\Omega)},
     \quad
     T \in (0, T_0),
   \end{align*}
  where in the last step it has been used that $\vec c \geq \vec 0$ and, therefore,
   \begin{align*}
    \int_\Omega \ip{\bb D \vec c(t,\vec z)}{\vec e} \dd \vec z
     &\leq d_\mathrm{max}  \int_\Omega \ip{\vec c(t,\vec z)}{\vec e} \dd \vec z
     = d_\mathrm{max}  \int_\Omega \ip{\vec c^0(\vec z)}{\vec e} \dd \vec z
     \leq \frac{d_\mathrm{max}}{d_\mathrm{min}} \int_\Omega \ip{\bb D \vec c^0}{\vec e} \dd \vec z.
   \end{align*}
  One may thus take
   \[
    C
     = \frac{d_\mathrm{max}}{d_\mathrm{min}} T_0 \norm{\vec c^0 \cdot \vec e}_{\LL_\infty(\Omega)} \, \norm{\bb D \vec c^0 \cdot \vec e}_{\LL^1(\Omega)}.
   \]
  \emph{Entropy identity:}
  By the theorem on derivatives of parameter-integrals, and as the derivative of the function $(0,\infty) \in x \mapsto x (\ln x - 1)$ is $\ln x $ for all $x \in (0, \infty)$, one finds that
   \begin{align*}
    &\frac{\dd}{\dd t} \int_\Omega \sum_i c_i (\mu_i^0 + \ln (c_i) - 1) \dd \vec z
     \\
     &= \int_\Omega \partial_t c_i (\mu_i^0 + \ln (c_i)) \dd \vec z
     = \int_\Omega \sum_i (d_i \Delta c_i + r_i(\vec c)) (\mu_i^0 + \ln (c_i))
     \\
     &= - \int_\Omega \sum_i \frac{d_i \abs{\nabla c_i}^2}{c_i} \dd \vec z
      + \int_\Sigma \sum_i (\mu_i^0 + \ln (c_i)) d_i \partial_{\vec n} c_i \dd \sigma(\vec z)
      + \int_\Omega \sum_i r_i(\vec c) (\mu_i^0 + \ln c_i) \dd \vec z
   \end{align*}
  The assertion will be established if $\sum_i (\mu_i^0 + \ln (c_i)) d_i \partial_{\vec n} c_i \dd \sigma(\vec z) = 0$ can be proved.
  From the boundary conditions $\ip{\vec e^k}{\partial_{\vec n} (\bb D \vec c)} = 0$, there are scalar functions $\eta_a: [0,T_0) \times \Sigma \rightarrow \R$ such that $\partial_{\vec n} (\bb D \vec c)|_\Sigma = \sum_{a=1}^{m^\Sigma} \eta_a \vec \nu^{\Sigma,a}$.
  Hence,
   \begin{align*}
    \sum_i (\mu_i^0 + \ln (c_i)) d_i \partial_{\vec n} c_i
     &= (\vec \mu^0 + \ln (\vec c)) \cdot \partial_{\vec n} (\bb D \vec c)
     \\
     &= \sum_a \eta_a (\vec \mu^0 + \ln(\vec c)) \cdot \vec \nu^{\Sigma,a}
     = \sum_a \eta_a \sum_{i=1}^N (\mu_i^0 + \ln(c_i)) \nu_i^{\Sigma,a}
     \\
     &= \sum_a \eta_a \sum_{i=1}^N \mu^{\Sigma,a} \nu_i^{\Sigma,a}
     = \sum_a \eta_a \mathcal{A}^\Sigma_a
     = 0
   \end{align*}
  since $\mu_i|_\Sigma = \mu^\Sigma$ at all times $t \geq 0$ and positions $\vec z \in \Sigma$ (sorption processes in equilibrium for the fast-sorption--fast-surface-chemistry limit model) and $\mathcal{A}^\Sigma_a = 0$ at all times $t \geq 0$ and all positions $\vec z \in \Sigma$ (chemical reactions on the surface in equilibrium).
 Therefore, this contribution to the sum vanishes, and the entropy identity follows by the fundamental theorem of calculus.
 \end{proof}
 
 \section{Appendix: $\LL_p$-maximal regularity for parabolic and elliptic boundary value problems with boundary conditions of varying differentiability orders}
 \label{sec:appendix}
 
 General $\LL_p$-maximal regularity and $\LL_p$-$\LL_q$-optimality results as in \cite{DeHiPr03} and \cite{DeHiPr07} for linear parabolic systems on bounded domains have been typically formulated for boundary conditions of the same type, so either on the Dirichlet data of the functions or on its normal flux through the boundary.
 These results are heavily based on harmonic analysis and multiplier theory on Banach spaces of class $\mathcal{HT}$ (UMD-spaces), see, e.g., \cite{BerGil94}, \cite{CleDePSukWit00}, \cite{KalWei01}.
 In this manuscript, however, we encounter the situation where the boundary conditions have a combined type, i.e.\ on some parts of the vector of concentrations $\vec c$, or, more precisely, some linear combinations of them corresponding to the stoichiometric vectors (nonlinear, or, in the linearised version, linear and inhomogeneous) Dirichlet data are prescribed, whereas on other parts, more precisely, on linear combinations corresponding to conserved quantities, no-flux boundary conditions have been imposed. For example, the $\LL_p$-maximal regularity results in \cite{DeHiPr03}, Theorem 7.11 (half-space problem) and Theorem 8.2 (domains with compact boundary), rely on a version of the Lopatinskii-Shapiro condition in which all the boundary operators $\mathcal{B}_j$, $j = 1, \ldots, N$, are \emph{homogeneous} of degree $m_j \in \N_0$ and some perturbation argument. In the definition of a principle symbol as used there, e.g.\ in \cite[Section 8.1]{DeHiPr03}, the principle part of the boundary operators considered here would not 'see' the Dirichlet type contribution, as their order of differentiation is $0$, so strictly smaller than the order $1$ for the Neumann type parts. In this appendix, therefore, some comments will be made on a possible extension of the results in \cite{DeHiPr03} and \cite{DeHiPr07} to boundary conditions of combined type. For this purpose, one has to closely follow the lines of \cite[Chapter 6--8]{DeHiPr03} and \cite{DeHiPr07} to observe which adjustments are needed to transfer the results there to this slightly more general setup.
 
 \subsection{Partial Fourier transforms}
 In \cite[Subsection 6.1]{DeHiPr03}, the authors start from the elliptic boundary value problem on the half-space $\R_+^n := \R^{n-1} \times (0,\infty)$ given as
  \begin{align}
   \lambda u + \mathcal{A}(D) u
    &= f
    &&\text{in } \R_+^n,
    \label{DHP03:6.3}
    \\
   \mathcal{B}_j(D) u
    &= g_j
    &&\text{on } \partial \R_+^n, \, j = 1, \ldots, m,
    \label{DHP03:6.4}
  \end{align}   
 where $\mathcal{A}(D)$ and $\mathcal{B}_j(D)$ only consist of principal parts
  \[
   \mathcal{A}(D) = \sum_{\abs{\vec \alpha} = 2m} a_{\mathcal{\vec \alpha}} D^{\vec \alpha},
   \quad
   \mathcal{B}_j(D) = \sum_{\abs{\vec \beta} = m_j} b_{j,\vec \beta} D^{\vec \beta}
  \]
 for $m \in \N$ and $m_j \in \{0, 1, \ldots, 2m-1\}$ and with constant coefficients $a_{\vec \alpha}, b_{j,\vec \beta} \in \B(E)$, and where $E$ is some Banach space (which only later will be assumed to be of class $\mathcal{HT}$, i.e.\ an UMD-space). Also, the standard notation $D^{\vec \alpha} = \prod_{i=1}^n D_{x_i}^{\alpha_i}$ with $D_{x_i} = - \ii \frac{\partial}{\partial x_i}$ and $\abs{\vec \alpha} = \sum_{i=1}^n \alpha_i$ for $\vec \alpha = (\alpha_1, \ldots, \alpha_n)^\mathsf{T} \in \N_0^n$ is used. This makes the symbols
  \begin{equation}
   \mathcal{A}(\vec \xi)
    = \sum_{\abs{\vec \alpha} = 2m} a_{\vec \alpha} \vec \xi^{\vec \alpha},
    \quad
   \mathcal{B}_j(\vec \xi)
    = \sum_{\abs{\vec \beta} = m_j} b_{j,\vec \beta} \vec \xi^{\vec \beta}
    \label{DHP03:6.2}
  \end{equation}
 homogeneous in $\vec \xi$ of order $2m$ for $\mathcal{A}$ and $m_j$ for $\mathcal{B}_j$, respectively.
 In the situation considered here, such a condition on the boundary operators is too restrictive. Therefore, we assume the following adjusted version concerning the operators $\mathcal{B}_j$ instead:
  \begin{assumption}
   The symbol $\mathcal{A} = \sum_{\abs{\vec \alpha} = 2m} a_{\vec \alpha} \vec \xi^{\vec \alpha}$ is homogeneous of degree $2m$, whereas for $j = 1, \ldots, m$ there are projections $\mathcal{P}_{j,k} \in \B(E)$, $k = 0, 1, \ldots, m_j < 2m$ such that $\mathcal{P}_{j,k} \mathcal{P}_{j,k'} = 0$ for $k \neq k'$, and
    \[
     \mathcal{B}_j(\vec \xi)
      = \sum_{k=0}^{m_j} \mathcal{B}_{j,k}(\vec \xi)
      \quad \text{with} \quad
     \mathcal{B}_{j,k}
      =\sum_{\abs{\vec \beta} = k} b_{j,\vec \beta} \vec \xi^{\vec \beta}  \mathcal{P}_{j,k}.
    \] 
  \end{assumption}
 Moreover, we assume that the symbol $\mathcal{A}(D)$ is parameter-elliptic with angle of ellipticity $\phi_{\mathcal{A}} \in [0, \pi)$, i.e.\
  \begin{equation}
   \sigma(\mathcal{A}(\vec \xi)) \subseteq \Sigma_{\pi - \phi},
    \quad
    \vec \xi \in \R^n \text{ with } \abs{\vec \xi} = 1
  \end{equation}
  for some $\phi \in [0, \pi)$ and $\phi_{\mathcal{A}} \in [0, \pi)$ is the infimum of these $\phi$.
 Here, one defines the sector $\Sigma_{\pi - \phi} = \{ z \in \C \setminus \{0\}: \, \abs{\arg(z)} < \pi - \phi \}$ for $\phi \in [0, \pi)$.
 For $p \in (1, \infty)$ and $\phi > \phi_{\mathcal{A}}$ consider the boundary value problem \eqref{DHP03:6.3}--\eqref{DHP03:6.4} for given data $\lambda \in \Sigma_{\pi-\phi}$, $f \in \LL_p(\R_+^{n+1};E)$ and $g_j^k \in \WW_p^{2m-k}(\R_+^{n+1};\ran(\mathcal{P}_{j,k}))$ for $j = 1, \ldots, m$ and $k = 0, 1, \ldots, m_j < 2m$.
 The function $f$ can be extended trivially by zero to $E_0 f \in \LL_p(\R^{n+1};E)$.
 Moreover, we set $g_j = \sum_{k=0}^{m_j} g_j^k = \sum_{k=0}^{m_j} \mathcal{P}_{j,k} g_j$ (by invariance of $\ran(\mathcal{P}_{j,k})$ under the projection $\mathcal{P}_{j,k})$).
 A common strategy is the following:
 First, solve the inhomogeneous problem
  \[
   \lambda v - \mathcal{A}(D) v
    = E_0 f
  \]
 in the full space $\R^{n+1}$, then consider the problem for $w := u - Pv$ which is homogeneous in $\R_{n+1}^+$, but inhomogeneous on the boundary $\partial \R_+^{n+1}$.
 By this, the resulting function $w := u - P (\lambda + A_{\R^{n+1}})^{-1} E_0 f =: u - w_0$, where $A_{\R^{n+1}}$ is the $\LL_p(\R^n)$-realisation of $\mathcal{A}(D)$ (see \cite[Section 5]{DeHiPr03}) and $P$ is the restriction operator, restricting any function on $\R^{n+1}$ to a function on $\R_+^{n+1}$, has to solve the abstract boundary value problem
  \begin{align*}
   \lambda w + \mathcal{A}(D) w
    &= 0
    &&\text{in } \R_+^{n+1}
    \\
   \mathcal{B}_j(D) w
    &= g_j - \mathcal{B}_j(D) w_0
    &&j = 1, \ldots, m, \, \text{on } \partial \R_+^{n+1},
  \end{align*}
 so that w.l.o.g.\ we may and will assume that $f = 0$ in \eqref{DHP03:6.3}--\eqref{DHP03:6.4} in the following.
 For functions $u = u(\vec z',y) \in \LL_1(\R_+^{n+1}) \cap \LL_p(\R_+^{n+1})$ one may now introduce the partial Fourier transform in $\vec z'$ as
  \[
   \mathcal{F} u(\vec \xi',y)
    = \int_{\R^n} \ee^{- \ii \vec z' \cdot \vec \xi'} u(\vec z',y) \dd \vec z'.
  \]
 This leads to the boundary value problem in the Laplace-Fourier space
  \begin{align}
   \lambda \mathcal{F} u (\vec \xi',y) + \sum_{l=0}^{2m} a_l(\vec \xi') D_y^{2m-l} \mathcal{F} u(\vec \xi',y)
    &= \mathcal{F}f (\vec \xi',y),
    &&\text{in } \R_+^{n+1},
    \label{DHP03:6.5}
    \\
   \sum_{k=0}^{m_j} \sum_{l=0}^k \sum_{\abs{\vec \beta} = l} \tilde b_{jkl}(\vec \xi') D_y^{k-l} \mathcal{P}_{j,k} \mathcal{F} u(\vec \xi',0)
    &= \mathcal{F} g_j(\vec \xi',0),
    &&j = 1, \ldots, m, \, \text{on } \partial \R_+^{n+1},
    \label{DHP03:6.6}
  \end{align}
 where $\tilde b_{jkl}(\vec \xi') = \sum_{\abs{\vec \gamma} = l} b_{j,(\vec y,l)}(\vec \xi') (\vec \xi')^{\vec \gamma}$.
 The ODE \eqref{DHP03:6.5} may then be rewritten as a first order system by considering the $E^{2m}$--valued function
  \[
   \mathcal{F} v
    := (\mathcal{F} u, \frac{1}{\rho} D_y \mathcal{F} u, \ldots, \frac{1}{\rho^{2m-1}} D_y^{2m-1} \mathcal{F} u)^\mathsf{T}
  \]
 for some $\rho > 0$. To exploit homogeneity of $\mathcal{A}(\vec \xi)$, $\mathcal{B}_{j,k}(\vec \xi)$ later on, one further introduces the variables
  \[
   \sigma := \frac{\lambda}{\rho^{2m}},
    \quad \text{and} \quad
   \vec b := \frac{\vec \xi'}{\rho}
  \]
 and defines a matrix-valued function $\bb A_0(\vec \xi',\lambda)$ as
  \[
   \bb A_0(\vec \xi',\lambda)
    = \left[ \begin{array}{ccccc} 0 & I & 0 & \cdots & 0 \\ 0 & 0 & I && 0 \\ \vdots & \vdots &\ddots &\ddots & \\ 0 & \cdots && 0 & I \\ c_{2m}(\vec \xi',\lambda) & c_{2m-1}(\vec \xi') & \cdots && c_1(\vec \xi') \end{array} \right],
  \]
 where
  \[
   c_j(\vec \xi') = - a_0^{-1} a_j(\vec \xi') \, (1 \leq j < 2m), \quad c_{2m}(\vec \xi',\lambda) = - a_0^{-1} (a_{2m}(\vec \xi') + \lambda).
  \]
 Note that the parameter-ellipticity of the symbol $\mathcal{A}(\vec \xi)$ implies, in particular, the invertibility of $a_0(\vec \xi') = a_0$.
 Just as in \cite{DeHiPr03}, the coefficients $a_j(\vec \xi)$ are $\vec \xi'$-homogeneous of degree $j$, and \eqref{DHP03:6.5} for the special case $f=0$ is equivalent to the first order in $y$ system of ordinary differential equations
  \begin{equation}
   D_y \mathcal{F} v
    = \rho \bb A_0(\vec b, \sigma) \mathcal{F} v,
    \quad
    \vec \xi' \in \R^n, \, y > 0.
    \label{DHP03:6.7}
  \end{equation}
 In contrast to the situation in \cite{DeHiPr03}, however, the boundary symbols $\mathcal{B}_j(\vec \xi') = \sum_{k=0}^{m_j} \mathcal{B}_{j,k}(\vec \xi)$ are not homogeneous in $\vec \xi'$ of degree $m_j$ in general, but only the symbols $\mathcal{B}_{j,k}(\vec \xi)$ are homogeneous in $\vec \xi$ of order $k$, so that
  \[
   \mathcal{F} \mathcal{B}_j(\rho \vec b,0)
    = \sum_{k=0}^{m_j} \sum_{l=0}^k \tilde b_{jkl}(\rho \vec b) D_y^{k-l} \mathcal{P}_{j,k} \mathcal{F} u(\rho \vec b,0)
    = \sum_{k=0}^{m_j} \sum_{l=0}^k \rho^l \tilde b_{jkl}(\vec b) D_y^{k-l} \mathcal{P}_{j,k} \mathcal{F} u(\rho \vec b,0)
  \]
 and the boundary condition \eqref{DHP03:6.6} is equivalent to
  \begin{equation}
   \mathcal{B}_{j,k}^0(\vec b) \mathcal{F} v(\rho \vec b, 0)
    =\frac{\mathcal{P}_{j,k} \mathcal{F} g_j(\rho \vec b,0)}{\rho^k},
    \quad
    j = 1, \ldots, m, \, k = 0, 1, \ldots, m_j,
  \end{equation}
 where one sets
  \[
   \mathcal{B}_{j,k}^0(\vec b)
    = (\tilde b_{jkk}(\vec b), \ldots, \tilde b_{jk0}, 0, \ldots, 0)^\mathsf{T},
    \quad
    j = 1, \ldots, m, \, k = 0, 1, \ldots, m_j.
  \]
 By \cite[Proposition 6.1]{DeHiPr03}, the spectrum of $\bb A_0(\vec b, \sigma)$ is strictly separated by the real axis, i.e.\ $\sigma(\bb A_0(\vec b, \sigma)) \cap \R = \emptyset$ and $\ii \sigma(\bb A_0(\vec b,\sigma))$ splits into two parts $S_\pm(\vec b,\sigma)$ which are strictly separated by the imaginary axis.
 More precisely, there are $c_1, c_2 > 0$ such that
  \begin{align}
   \inf \{ \Re \mu: \, \mu \in S_+(\vec b, \sigma) \}
    &\geq c_1,
    &&\abs{\vec b} \leq 1, \, \abs{\sigma} \leq 1,
    \label{DHP03:6.11}
    \\
   \sup \{ \Re \mu: \, \mu \in S_-(\vec b, \sigma) \}
    &\leq - c_2,
    &&\abs{\vec b} \leq 1, \, \abs{\sigma} \leq 1.
    \label{DHP03:6.12}
  \end{align}
 Therefore, the spectral projections $P_\pm(\vec b,\sigma)$ onto $S_\pm(\vec b,\sigma)$ exist.
 
 \subsection{The Lopatinskii-Shapiro condition on the half space}
 
 Similar to the situation in \cite{DeHiPr03}, the \emph{Lopatinskii--Shapiro condition} for the half-space problem may be expressed as follows.
 
 \begin{assumption}[Lopatinskii--Shapiro condition]
  For each $\lambda \in \overline{\Sigma}_{\pi - \phi}$ and $\vec \xi' \in \R^n$ such that $(\lambda, \vec \xi') \neq (0, \vec 0)$, the problem
   \begin{align*}
    \lambda v(y) + \mathcal{A}(\vec \xi',D_y) v(y)
     &= 0,
     &&y > 0,
     \\
    \mathcal{B}_j(\vec \xi',D_y) v(0)
     &= g_j,
     &&j = 1, \ldots, m
   \end{align*}
  admits a unique solution $\vec v \in \CC_0(\R_+; E)$ for each $\vec g = (g_1, \ldots, g_m)^\mathsf{T} \in E^m$.
 \end{assumption}

The proofs of \cite[Proposition 6.2--6.4]{DeHiPr03} then directly transfer to this slightly modified situation to give
 \begin{proposition}
  Let $\mathcal{A}(D)$ be a parameter elliptic operator of order $2m$ and angle of ellipticity $\phi_\mathcal{A} \in [0, \pi)$. Let $\phi > \phi_\mathcal{A}$ and $\lambda \in \overline{\Sigma}_{\pi-\phi}$ and assume that the Lopatinskii-Shapiro condition holds true.
  Then there exists a unique solution $\mathcal{F}u$ of the system \eqref{DHP03:6.5}--\eqref{DHP03:6.6}, which is given by the sum
   \[
    \mathcal{F} u
     = (\mathcal{F} v)_1 + \mathcal{F} w_1 + (\mathcal{F} w_2)_1
   \]
  where $(\cdot)_1$ denotes the first component of an element in $E^{2m}$, and
   \begin{align}
    \mathcal{F} v(\vec \xi',y,\lambda)
     &= \ee^{\ii \rho \bb A_0(\vec b,\sigma)} \mathcal{F}v(\vec \xi',0,\lambda),
     \quad
     \vec \xi' \in \R^n, \, y > 0,
     \label{DHP03:6.13}
     \\
    \intertext{where }
   \mathcal{F} v(\vec \xi',0,\lambda)
    &= \bb M(\vec b,\sigma) \mathcal{F} g_\rho(\vec \xi',0)
    \\
    \intertext{for some function $\bb M(\vec b,\sigma): E^m \rightarrow E^{2m}$, which is jointly holomorphic in $\vec b$ and $\sigma$, and with}
   \mathcal{F} g_\rho(\vec \xi',0)
    &= \left( \sum_{k=0}^{m_1} \tfrac{\mathcal{P}_{1,k} \mathcal{F} g_1(\vec \xi',0)}{\rho^k}, \ldots, \sum_{k=0}^{m_m} \tfrac{\mathcal{P}_{m,k} \mathcal{F} g_m(\vec \xi',0)}{\rho^k} \right),
    \\
   \mathcal{F} w_1(\vec \xi',y,\lambda)
    &= (\mathcal{F} P (\lambda + A_{\R^{n+1}})^{-1} E_0 f)(\vec \xi',y),
    \label{DHP03:6.18}
    \\
   \mathcal{F} w_2(\vec \xi',y,\lambda)
    &= - \ee^{\ii \rho \bb A_0(\vec b,y)} \bb M(\vec b,\sigma) (\mathcal{F} g_\rho^B)(\vec \xi',0),
    \label{DHP03:6.19}
    \\
    \intertext{where}
    \mathcal{F} g_\rho^B(\vec \xi',0)
     &= \left( \sum_{k=0}^{m_1} \tfrac{\mathcal{F} g_{1,k}^B(\vec \xi',0)}{\rho^k}, \ldots, \sum_{k=0}^{m_m} \tfrac{\mathcal{F} g_{m,k}^B(\vec \xi',0)}{\rho^k} \right)
    \\
    \intertext{with}
    \mathcal{F} g_{j,k}^B(\vec \xi',0)
     &= \int_0^\infty h_{j,k}(\vec \xi',s) (\mathcal{F} f)(\vec \xi',s) \dd s,
     \quad
     j = 1, \ldots, m, \, k = 0, 1, \ldots, m_j
     \\
     \intertext{and}
    h_{j,k}(\vec \xi',s)
     &= \int_{\R} \mathcal{B}_j(\vec \xi',\eta) \mathcal{P}_{j,k}
     (\lambda + \mathcal{A}(\vec \xi',\eta))^{-1} \ee^{- \ii \eta s} \dd \eta.
   \end{align}
 \end{proposition}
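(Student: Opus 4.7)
The plan is to adapt the strategy used in \cite[Propositions 6.2--6.4]{DeHiPr03} to the present setting where the boundary operators $\mathcal{B}_j$ split into pieces of varying order via the projections $\mathcal{P}_{j,k}$. The solution will be built in three pieces, corresponding to a full-space resolvent step, a homogeneous half-space problem with the prescribed boundary data, and a correction term compensating for the contribution of the full-space solution on the boundary.

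First I would reduce to the homogeneous equation by setting $w_1 := P(\lambda + A_{\R^{n+1}})^{-1} E_0 f$, where $E_0$ is the trivial extension by zero to $\R^{n+1}$ and $P$ is the restriction to $\R_+^{n+1}$. This is justified by the resolvent estimates for the full-space realization of $\mathcal{A}(D)$ used already in \cite[Section 5]{DeHiPr03}, which are unaffected by the change in boundary conditions. After subtracting $w_1$, the remaining function $u - w_1$ satisfies a homogeneous PDE on $\R_+^{n+1}$ together with the modified boundary data $g_j - \mathcal{B}_j(D) w_1|_{y=0}$. Applying the partial Fourier transform in $\vec z'$ and rewriting the resulting $2m$-th order ODE in $y$ as the first-order system \eqref{DHP03:6.7} via $\mathcal{F} v = (\mathcal{F} u, \rho^{-1} D_y \mathcal{F} u, \ldots, \rho^{1-2m} D_y^{2m-1} \mathcal{F} u)^{\mathsf{T}}$, the homogeneous equation becomes $D_y \mathcal{F} v = \rho \bb A_0(\vec b, \sigma) \mathcal{F} v$, whose general solution is $\ee^{\ii \rho \bb A_0(\vec b,\sigma) y} \mathcal{F} v(\vec \xi',0,\lambda)$.

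The $\LL_p$-integrability in $y$ forces the initial value to lie in the stable subspace $\ran(P_-(\vec b,\sigma))$ of dimension $m$, in view of the spectral separation \eqref{DHP03:6.11}--\eqref{DHP03:6.12}. The $m$ boundary conditions \eqref{DHP03:6.6}, which in the rescaled variables read
\[
 \mathcal{B}_{j,k}^0(\vec b) \mathcal{F} v(\rho \vec b,0) = \rho^{-k} \mathcal{P}_{j,k} \mathcal{F} g_j(\rho \vec b, 0),
 \quad j = 1, \ldots, m, \, k = 0, \ldots, m_j,
\]
then have to pin down exactly these $m$ degrees of freedom. Here lies the main subtlety and the point where the argument deviates from \cite{DeHiPr03}: the Lopatinskii--Shapiro condition has to be formulated for the \emph{sum} $\mathcal{B}_j = \sum_k \mathcal{B}_{j,k}$ of operators of varying order, but once it is imposed as in the preceding Assumption, it is precisely the statement that the boundary trace map $P_-(\vec b,\sigma) E^{2m} \to E^m$, $\mathcal{F} v(\cdot,0) \mapsto (\mathcal{B}_j(\vec b, D_y) \mathcal{F} v(\cdot,0))_{j=1}^m$, is an isomorphism for every $(\vec b,\sigma) \neq (0,0)$ with $\sigma \in \overline{\Sigma}_{\pi-\phi}$. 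Inverting this isomorphism produces the matrix $\bb M(\vec b,\sigma)$, and the joint holomorphy of $\bb M$ in $(\vec b,\sigma)$ follows from the holomorphic dependence of $P_\pm(\vec b,\sigma)$ on the parameters via the Dunford calculus, exactly as in \cite[Proof of Proposition 6.3]{DeHiPr03}.

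Combining the pieces yields $\mathcal{F} u = (\mathcal{F} v)_1 + \mathcal{F} w_1 + (\mathcal{F} w_2)_1$, where $\mathcal{F} w_2$ is the half-space homogeneous solution constructed analogously but with boundary data $-\mathcal{B}_j(D) w_1|_{y=0}$; writing this boundary data via the Fourier inversion of the full-space resolvent gives the kernel representation $h_{j,k}$ stated in \eqref{DHP03:6.19}. The hard part is organizing the bookkeeping of the different homogeneity degrees, so that each $\mathcal{P}_{j,k}$-component is rescaled by the appropriate power $\rho^{-k}$ and the resulting matrix-valued symbol $\bb M(\vec b,\sigma)$ still enjoys the uniform boundedness and holomorphy properties needed later on for the operator-valued Fourier multiplier theorems; once this is set up, the remaining uniqueness and existence statements are essentially those of \cite[Proposition 6.4]{DeHiPr03}.
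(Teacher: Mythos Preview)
Your proposal is correct and follows essentially the same approach as the paper's proof: both decompose the solution into the full-space resolvent piece $w_1$, the homogeneous half-space problem with data $g_j$ (yielding $\mathcal{F}v$ via the first-order reformulation, the stable subspace $\ran P_-(\vec b,\sigma)$, and the Lopatinskii--Shapiro isomorphism defining $\bb M$), and the correction $w_2$ with boundary data $-\mathcal{B}_j(D)w_1|_{y=0}$ expressed through the kernels $h_{j,k}$. The paper gives slightly more detail on the holomorphy of $\bb M$ (closed graph theorem for uniform boundedness, then continuity on complex lines via $P_-$ and the LS condition), but your reference to the Dunford-calculus argument of \cite[Proposition~6.3]{DeHiPr03} amounts to the same thing; your identification of the $\rho^{-k}$ bookkeeping as the main new ingredient is exactly the point the paper emphasizes.
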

 
 \begin{proof}
  The proposition is established in the following way:
  First, the linear problem is decomposed into three sub-problems, each of which is easier to handle than the original problem.
  The first of these is the problem \eqref{DHP03:6.5}--\eqref{DHP03:6.6} for the special case $f = 0$, i.e.\
   \[
    \begin{cases}
     \lambda \mathcal{F} u - \mathcal{A}(D) \mathcal{F} u
      = 0
      &\text{in } \R_+^{n+1}
      \\
     \mathcal{B}_j(D) \mathcal{F} u
      =g_j
      &\text{on } \partial \R_+^{n+1}, \, j = 1, \ldots, m.
    \end{cases}
   \]
  Secondly, one extends $f$ by zero to $E_0 f$ on the full space $\R^{n+1}$ and solves the elliptic problem on the full space
  \[
   \lambda \mathcal{F} u - \mathcal{A}(D) \mathcal{F} u
    = E_0 f
    \quad \text{in } \R^{n+1},
  \]
 which constitutes the solution $\mathcal{F} w_1$ to the inhomogeneous problem
  \[
    \begin{cases}
     \lambda \mathcal{F} u - \mathcal{A}(D) \mathcal{F} u
      = f
      &\text{in } \R_+^{n+1}
      \\
     \mathcal{B}_j(D) \mathcal{F} u
      =: \mathcal{B}_j(D) \mathcal{F} w_1
      &\text{on } \partial \R_+^{n+1}, \, j = 1, \ldots, m.
    \end{cases}
  \]
 In the last step one returns to the first problem, but replaces $g_j$ by $- \mathcal{B}_j(D) \mathcal{F} w_1$, i.e.\ minus the boundary data for the solution to the second auxiliary problem on the full space:
   \[
    \begin{cases}
     \lambda \mathcal{F} u - \mathcal{A}(D) \mathcal{F} u
      = 0
      &\text{in } \R_+^{n+1}
      \\
     \mathcal{B}_j(D) \mathcal{F} u
      = - \mathcal{B}_j(D) w_1
      &\text{on } \partial \R_+^{n+1}, \, j = 1, \ldots, m.
    \end{cases}
   \]
  The only thing left to do there, is to derive a formula for the boundary values $\mathcal{B}_j(D) w_1$ on $\partial \R_+^{n+1}$. \newline
  As a first step, the case $f = 0$ may be handled analogously to \cite[Proposition 5.3]{DeHiPr03} which provides us with a unique solution $(\mathcal{F} v)_1$ to the problem \eqref{DHP03:6.3}--\eqref{DHP03:6.4} for $f = 0$, and which has the form \eqref{DHP03:6.13}, where $\bb M(\vec b,\sigma)$ is jointly holomorphic in the variables $(\vec b,\sigma)$. Since the projections $\mathcal{P}_{j,k}$ project the problem to subspaces on which the corresponding system for the boundary operators is homogeneous in $\vec \xi'$ of degree $k$, uniqueness and existence of solutions, as well as the representation formula for the solution follow via the equivalent reformulation as a first order system. It, therefore, remains to prove holomorphy of the map $M$. This can be done almost literally as in the proof of \cite[Proposition 6.2]{DeHiPr03}:
  First, employing the closed graph theorem, $M(\vec z)$ is a linear and closed map $E^m \rightarrow E^{2m}$ is uniformly bounded, first on the set $D = \overline{\BB_1(0)}^{\R^n} \times \overline{\Sigma}_{\pi-\phi} \subseteq \R^n \times \C$, and then in a complex neighourhood $\tilde D$ of $D$, which is sufficiently close to $D$. Using the spectral projection $P_-$ and the Lopatinskii-Shapiro condition, one can then proceed by demonstrating continuity of $v(z) = M(z) g$, for any fixed $g \in E^m$, on complex lines, also using that $P_-$ and the operators $\BB_{j,k}^0$ are continuous.
  Thereof, complex differentiability follows rather easily. \newline
  For the solution of the full space problem, one may employ the full space theory developed in \cite[Section 5]{DeHiPr03}, to write
   \[
    \mathcal{F} w_1(\vec \xi', y, \lambda)
     = (\mathcal{F} P (\lambda + A_{\R^{n+1}})^{-1} E_0 f)(\vec \xi',y).
   \]
  Here, as before, we write $E_0$ for the trivial extension from $\R_+^{n+1}$ to $\R^{n+1}$, $P$ for the restriction from $\R^{n+1}$ to $\R_+^{n+1}$, and $A_{\R_{n+1}}$ for the $\LL_p(\R^{n+1})$-realisation of the differential operator $\mathcal{A}(D)$.
  In the last of the three sub-problems, one further employs a following representation of the boundary values $\mathcal{F} g_j^B(\vec \xi',0) = (\mathcal{B}_j(D) \mathcal{F} P (\lambda + A_{\R^{n+1}})^{-1} E_0 f)(\vec \xi',0)$, which is given by $\mathcal{F} g_j^B(\vec \xi',0) = \sum_{k=0}^{m_j} g_{j,k}^B(\vec \xi,0)$ with
   \begin{align*}
    \mathcal{F} g_{j,k}^B(\vec \xi',0)
     &= \int_0^\infty h_{j,k}(\vec \xi',s) (\mathcal{F} f)(\vec \xi',s) \dd s
     \\
    h_{j,k}(\vec \xi',s)
     &= \int_{\R} \mathcal{B}_j(\vec \xi',\eta) \mathcal{P}_{j,k} (\lambda + \mathcal{A}(\vec \xi',\eta))^{-1} \ee^{-\ii \eta s} \dd \eta,
     \quad
     1 \leq j \leq m, \, 0 \leq k \leq m_j < 2m.
   \end{align*}
  Then, considerations as in the first step lead to the desired representation of $(\mathcal{F} w_2)_1$, and the proposition follows.
 \end{proof}
 
 \subsection{Kernel estimates}

 To derive kernel estimates, one defines (analogously to \cite[Section 6.3]{DeHiPr03}) weighted versions of $\mathcal{F} g_\rho$ and the semigroup associated to $\ii \rho \bb A_0(\vec b, \sigma)$ according to
  \begin{align}
   \mathcal{F} g_\rho^{2m}(\vec \xi',0)
    &= \rho^{2m} \mathcal{F} g_\rho(\vec \xi',0),
    \label{DHP03:6.20}
    \\
   \mathcal{F} v_\rho^{2m}(\vec \xi',y,\lambda)
    &= \frac{1}{\rho^{2m}} \ee^{\ii \rho \bb A_0(\vec b,\sigma) y}
    \label{DHP03:6.21},
  \end{align} 
 so that, by formula \eqref{DHP03:6.13} the identity
  \[
   \mathcal{F} v(\vec \xi',y,\lambda)
    = \mathcal{F} v_\rho^{2m}(\vec \xi',y,\lambda) \mathcal{F} g_\rho^{2m}(\vec \xi',0)
  \]
 holds true.
 Let us further denote by
  \begin{equation}
   K_\lambda(\vec z',y)
    = \mathcal{F}^{-1} (\mathcal{F} v_\rho^{2m})(\vec z',y)
    = \frac{1}{(2\pi)^n} \int_{\R^n} \ee^{\ii \vec z' \cdot \vec \xi'} \mathcal{F} v_\rho^{2m}(\vec \xi',y,\lambda) \dd \vec \xi',
    \quad
    \vec z' \in \R^n, \, y > 0
    \label{DHP03:6.22}
  \end{equation}
 the inverse (partial) Fourier transform of the $\mathcal{B}(E)$-valued function $\mathcal{F} v_\rho^{2m}$.
 Following along the lines of \cite[Section 6.3]{DeHiPr03}, one introduces
  \[
   p_{m,k}^n: \R_+ \rightarrow \R,
    \quad
   p_{m,k}^n(r)
    := \int_0^\infty \frac{s^{n-2}}{(1+s)^{m-1-k}} \ee^{-(s+1)r} \dd s.
  \]
  
  \begin{lemma}
   For all $c,y > 0$ and $m, n, k \in \N_0$, the following identity is valid:
    \[
     \int_0^\infty p_{m,k}^n (c (y + r)) r^{n-1} \dd r
      = \frac{(n-1)!}{c^n} p_{m+n,k}^n(cy).
    \]
  \end{lemma}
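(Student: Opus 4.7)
The plan is to unfold the definition of $p_{m,k}^n$ on the left-hand side and swap the order of integration. Explicitly,
\begin{align*}
 \int_0^\infty p_{m,k}^n(c(y+r)) \, r^{n-1} \dd r
  &= \int_0^\infty \int_0^\infty \frac{s^{n-2}}{(1+s)^{m-1-k}} \ee^{-(s+1)c(y+r)} \dd s \, r^{n-1} \dd r.
\end{align*}
Since the integrand is measurable and non-negative (or absolutely integrable, under the tacit regularity hypotheses that make $p_{m,k}^n$ well-defined), Fubini's theorem applies and I may pull the $e^{-(s+1)cy}$ factor out of the inner integral.

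Then I would evaluate the resulting $r$-integral by the standard Gamma identity
\[
 \int_0^\infty r^{n-1} \ee^{-\alpha r} \dd r
  = \frac{(n-1)!}{\alpha^n}, \qquad \alpha > 0,
\]
applied with $\alpha = (s+1)c$. This produces a factor $(n-1)!/(c^n (1+s)^n)$, which combines with the original $(1+s)^{-(m-1-k)}$ to give $(1+s)^{-(m+n-1-k)}$. Factoring $(n-1)!/c^n$ outside, the remaining $s$-integral is exactly $p_{m+n,k}^n(cy)$:
\[
 \int_0^\infty p_{m,k}^n(c(y+r)) \, r^{n-1} \dd r
  = \frac{(n-1)!}{c^n} \int_0^\infty \frac{s^{n-2}}{(1+s)^{m+n-1-k}} \ee^{-(s+1)cy} \dd s
  = \frac{(n-1)!}{c^n} \, p_{m+n,k}^n(cy).
\]

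There is essentially no obstacle — this is a one-line Fubini-plus-Gamma-integral computation. The only point to verify is the integrability required to apply Fubini, which is immediate from $c, y > 0$ (ensuring exponential decay of $e^{-(s+1)cy}$ in $s$ and of $e^{-(s+1)cr}$ in both $s$ and $r$), together with the implicit regularity assumption $n \geq 2$ under which the factor $s^{n-2}$ in the definition of $p_{m,k}^n$ is locally integrable at $s = 0$.
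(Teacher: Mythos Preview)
Your proof is correct. The paper's own proof merely says the identity ``follows straightforward via integration by parts'' and cites \cite[Corollary 5.3]{DeHiPr03}; your Fubini-plus-Gamma-integral computation is exactly the standard way to carry this out (the Gamma identity $\int_0^\infty r^{n-1}\ee^{-\alpha r}\dd r = (n-1)!/\alpha^n$ being itself an integration-by-parts result), so the approaches coincide.
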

  \begin{proof}
  This identity follows straightforward via integration by parts, see \cite[Corollary 5.3]{DeHiPr03}.
  \end{proof}
  
 From this, we obtain the following kernel estimates.
 
 \begin{proposition}
 \label{DHP03:Prop_6.5}
 \label{DHP03:Prop_6.6}
  Let $\mathcal{A}(D)$ be a parameter elliptic operator of order $2m$ and angle of ellipticity $\phi_\mathcal{A} \in [0,\pi)$.
  Let $\phi > \phi_\mathcal{A}$ and assume that the Lopatinskii-Shapiro condition holds true.
  Then, for every multi-index $\vec \alpha$, there are constants $M,c > 0$ such that
   \begin{align}
    \abs{D^{\vec \alpha} K_\lambda(\vec z',y)}
     &\leq M \cdot \abs{\lambda}^{\tfrac{n-2m+\abs{\vec \alpha}}{2m}} p_{2m,\abs{\vec \alpha}-1}^{n+1} \left( c \abs{\lambda}^{1/(2m)} \left( \abs{\vec z'} + \abs{y} \right) \right),
     \quad
     \vec z' \in \R^n, \, y > 0, \, \lambda \in \Sigma_{\pi-\phi}
     \label{DHP03:6.24}
     \\
     \intertext{and}
    \abs{D^{\vec \alpha} K_{\lambda}^{w,j}(\vec z',y,\tilde y)}
     &\leq M \abs{\lambda}^{\frac{n-2m+1+\abs{\vec \alpha}}{2m}} p_{2m,\abs{\vec \alpha}}^{n+1}\left(c \abs{\lambda}^{1/(2m)} (\abs{\vec z'} + y + \tilde y) \right),
     \quad
     \vec z' \in \R^n, \, y, \tilde y > 0, \, \lambda \in \Sigma_{\pi-\phi}. 
    \label{DHP03:6.26}
   \end{align}
 \end{proposition}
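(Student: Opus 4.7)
The plan is to follow the strategy of \cite{DeHiPr03}, Section 6.3, and adapt it to the situation where the boundary operators decompose via the projections $\mathcal{P}_{j,k}$ into pieces of different homogeneity orders. A convenient observation is that the kernel $K_\lambda$ itself depends only on the semigroup $\ee^{\ii \rho \bb A_0(\vec b,\sigma) y}$ generated by the elliptic symbol $\mathcal{A}(D)$ and not on the boundary operators; consequently, the estimate \eqref{DHP03:6.24} can be obtained by literally the same argument as \cite[Proposition~6.5]{DeHiPr03}. For the second kernel $K_\lambda^{w,j}$, which encodes the dependence on $h_{j,k}$ and thus on $\mathcal{B}_j \mathcal{P}_{j,k}$, the decomposition into homogeneous pieces has to be tracked carefully, but on each range $\ran(\mathcal{P}_{j,k})$ the symbol $\mathcal{B}_{j,k}$ is again $(\vec\xi',\eta)$-homogeneous, which reduces the analysis to the situation of \cite{DeHiPr03}.

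The first main step is to convert the spectral separation \eqref{DHP03:6.11}--\eqref{DHP03:6.12} into a uniform exponential decay estimate for $\mathcal{F} v_\rho^{2m}$. Using the Cauchy integral formula over a contour enclosing $S_-(\vec b,\sigma)$ and staying at distance $\geq c/2$ from the imaginary axis, together with the joint holomorphy of the spectral projections and uniform boundedness of $\bb M(\vec b,\sigma)$ obtained in the previous subsection, one derives $\abs{\mathcal{F} v_\rho^{2m}(\vec\xi', y, \lambda)} \leq C\rho^{-2m} \ee^{-c\rho y}$ for $\abs{\vec b},\abs{\sigma} \leq 1$, and analogously for derivatives in $y$. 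Derivatives in the $\vec z'$-variable correspond to multiplication by $\vec\xi'^{\vec\alpha'}$ after Fourier transform, which boosts the polynomial growth by $\abs{\vec\alpha'}$ powers of $\rho$; combined derivatives then yield powers $\rho^{\abs{\vec\alpha}-2m}$ inside the integrand.

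The second step is the scaling that fixes $\rho = \abs{\lambda}^{1/(2m)}$, so that $\sigma$ becomes unit modulus and the integration measure contributes $\rho^n = \abs{\lambda}^{n/(2m)}$. Passing to polar coordinates $\vec\xi' = r\omega$, $r > 0$, $\omega \in S^{n-1}$, one may, after the substitution $s = r/\rho - 1$, bring the integrand into a form that matches exactly the definition of $p_{m,k}^n$; the identities stated in the preceding lemma are then invoked to carry out the radial integration. Collecting the exponents gives the claimed $\abs{\lambda}^{(n-2m+\abs{\vec\alpha})/(2m)}$ prefactor and the kernel $p_{2m,\abs{\vec\alpha}-1}^{n+1}(c\abs{\lambda}^{1/(2m)}(\abs{\vec z'}+y))$.

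For $K_\lambda^{w,j}$, one inserts the explicit form of $h_{j,k}$ and $\mathcal{P}_{j,k}$ into \eqref{DHP03:6.19} and closes the $\eta$-contour in the appropriate complex half-plane, using parameter-ellipticity of $\mathcal{A}$ and the fact that $\mathcal{B}_{j,k}\mathcal{P}_{j,k}$ is $(\vec\xi',\eta)$-homogeneous of order $k \leq m_j < 2m$. Summing over $k$ with the $\rho^{-k}$ normalization inherent in the definition of $\mathcal{F} g_\rho^B$, each summand produces a kernel of the same type as $K_\lambda$ but with one additional $y$-integration coming from the convolution with $\tilde y$; this raises the order of $p$ by one and shifts the exponent of $\abs{\lambda}$ by $1/(2m)$, yielding exactly \eqref{DHP03:6.26}. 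The main technical obstacle I foresee is the bookkeeping of the mixed homogeneity: one must verify that for each $(j,k)$ the $\rho^{-k}$ scaling is consistent with the $\rho$-powers generated by differentiation, spectral projection and integration, and that uniformity in $(\vec b,\sigma)$ on the closed unit set is preserved when summing the finitely many projection pieces. Once this is done, the resulting estimates are the desired ones by exactly the same route as in \cite[Propositions 6.5 and 6.6]{DeHiPr03}.
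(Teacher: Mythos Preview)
Your overall plan---reduce to \cite{DeHiPr03} on each homogeneous piece $\mathcal{P}_{j,k}$ and sum---is sound, and the observation that the decomposition into orders affects $K_\lambda^{w,j}$ via $h_{j,k}$ but leaves the structure of the first kernel essentially unchanged is correct in spirit (though note that $K_\lambda$ does carry the factor $\bb M(\vec b,\sigma)$, which encodes the Lopatinskii--Shapiro data; so it is not literally independent of the boundary operators).

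There is, however, a genuine gap in your mechanism for obtaining the decay in $\abs{\vec z'}$. Your first step only produces the semigroup bound $\abs{\mathcal{F} v_\rho^{2m}(\vec\xi',y,\lambda)} \leq C \rho^{-2m} \ee^{-c\rho y}$, and your second step fixes $\rho = \abs{\lambda}^{1/(2m)}$. But with this fixed scaling the variable $\vec b = \vec\xi'/\rho$ ranges over all of $\R^n$, so the uniform bounds on $\bb A_0(\vec b,\sigma)$ and $\bb M(\vec b,\sigma)$, which are only available on the compact region $\abs{\vec b}\leq 1$, $\abs{\sigma}\leq 1$, are not applicable. More importantly, integrating the modulus bound against $\ee^{\ii \vec z'\cdot\vec\xi'}$ gives no decay in $\abs{\vec z'}$ at all; the substitution $s = r/\rho - 1$ cannot produce the factor $\ee^{-(s+1)c\abs{\lambda}^{1/(2m)}\abs{\vec z'}}$ that the function $p_{2m,\abs{\vec\alpha}-1}^{n+1}$ requires.

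The paper's proof (following \cite[Section~6.3]{DeHiPr03}) handles both issues simultaneously by a different route: one first rotates so that $\bb Q\vec z' = (\abs{\vec z'},0,\ldots,0)^\mathsf{T}$, writes $\bb Q\vec\xi' = (a,r\vec\varphi)$, and then chooses $\rho = (\abs{\lambda}^{1/m}+a^2+r^2)^{1/2}$ \emph{depending on the integration variables}, so that $(\vec b,\sigma)$ stays in the compact region throughout. The exponential decay in $\abs{\vec z'}$ is then obtained by a genuine contour deformation in the complex $a$-plane, shifting the real line to the path $\gamma_\varepsilon(s) = s + \ii\varepsilon(\abs{\lambda}^{1/m}+r^2+s^2)^{1/2}$; this is admissible because $\bb A_0$ and $\bb M$ are jointly holomorphic, and it converts $\ee^{\ii\abs{\vec z'}a}$ into a factor bounded by $\ee^{-\varepsilon\abs{\vec z'}(r+\abs{s}+\abs{\lambda}^{1/(2m)})}$. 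Only after this step does the radial integral collapse to the function $p_{2m,k}^{n+1}$. Your Cauchy integral in the spectral parameter (the Dunford integral for the semigroup) is a different contour argument and does not substitute for this deformation. The same rotation-and-contour-shift is needed again for $K_\lambda^{w,j}$, on top of the closing of the $\eta$-contour that you do mention.
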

 
 \begin{proof}
  We proceed as in the proof of \cite[Proposition 6.5]{DeHiPr03}.
  The proof of the kernel estimates heavily uses the theory of complex functions, in particular Cauchy's Theorem.
  Also, it is uses that the formula
   \[
    \mathcal{F} v(\vec \xi',y,\lambda)
     = \mathcal{F} v_\rho^{2m}(\vec \xi',y,\lambda) \mathcal{F} g_\rho^{2m}(\vec \xi',0),
     \quad
     \vec \xi' \in \R^n, \, y > 0, \, \lambda \in \Sigma_{\pi-\phi}
   \]
  is valid independently of the particular choice of $\rho > 0$, so that later on, we may choose variables $(a,r)$ and then adjust $\rho = (\abs{\lambda}^{1/m} + a^2 + r^2)^{\nicefrac{1}{2}}$ accordingly.
  Fix $\vec z' \in \R^n$ and let $\bb Q \in \R^{n \times n}$ be a rotation mapping $\vec z'$ to $\bb Q \vec z' = (\abs{\vec z'},0, \ldots, 0)^\mathsf{T}$. Using polar coordinates one writes $\bb Q \vec \xi' = (a,r \vec \varphi)^\mathsf{T}$ where $a \in \R$, $r > 0$ and $\vec \varphi \in \S^{n-2} = \{\vec z \in \R^{n-1}: \, \abs{\vec z} = 1\}$. 
 Since, by definition, $\sigma = \tfrac{\lambda}{\rho^{2m}}$ and $\vec b = \tfrac{\vec \xi'}{\rho}$, one then obtains via the transformation rule (note that $\abs{\det \bb Q} = 1$ and $\bb Q \bb Q^\mathsf{T} = \bb Q^\mathsf{T} \bb Q = \bb I$) that
  \begin{align*}
   K_\lambda(\vec z',\lambda)
    &= \frac{1}{(2\pi)^n} \int_{\R^n} \ee^{\ii \vec z' \cdot \vec \xi'} \mathcal{F} v_\rho^{2m}(\vec \xi',y,\lambda) \dd \vec \xi'
    \\
    &= \frac{1}{(2\pi)^n} \int_{\R^n} \ee^{\ii \bb Q \vec z' \cdot \bb Q \vec \xi'} \ee^{\ii \rho \bb A_0(\vec b,\sigma) y} \bb M(\vec b,\sigma) \frac{1}{\rho^{2m}} \dd \vec \xi'
    \\
    &= \frac{1}{(2\pi)^n} \int_{\S^{n-2}} \int_0^\infty r^{n-2} \int_{-\infty}^\infty \ee^{\ii \abs{\vec z'} a} \ee^{\ii \rho A_0(Q^\mathsf{T} (\tfrac{a}{\rho}, \tfrac{r}{\rho} \vec \varphi), \tfrac{\lambda}{\rho^{2m}}) y} M(Q^\mathsf{T}(\tfrac{a}{\rho}, \tfrac{r}{\rho} \vec \varphi), \tfrac{\lambda}{\rho^{2m}}) \frac{1}{\rho^{2m}} \dd a \dd r \dd \sigma(\vec \varphi)
    \\
    &= \frac{1}{(2\pi)^n} \int_{\S^{n-2}} \int_0^\infty r^{n-2} \int_{-\infty}^\infty \ee^{\ii \abs{\vec z'} a} \frac{1}{(\abs{\lambda}^{1/m} + a^2 + r^2)^m} F_1(\lambda,a,r,\vec \varphi, y) F_2(\lambda,a,r,\vec \varphi) \dd a \dd r \dd \sigma(\vec \varphi)
    \\
    \intertext{for}
    F_1(\lambda,a,r,\vec \varphi, y)
     &= \ee^{\ii (\abs{\lambda}^{1/m} + a^2 + r^2) \bb A_0((\abs{\lambda}^{1/m} + a^2 + r^2)^{-\nicefrac{1}{2}} \bb Q^\mathsf{T}(a,r \vec \varphi), \lambda (\abs{\lambda}^{1/m} + a^2 + r^2)^{-m}) y},
     \\
    F_2(\lambda,a,r,\vec \varphi)
     &= \bb M((\abs{\lambda}^{1/m} + a^2 + r^2)^{\nicefrac{1}{2}} \bb Q^\mathsf{T}(a, r \vec \varphi), \lambda (\abs{\lambda} + a^2 + r^2)^{-\nicefrac{1}{2}}).
   \end{align*}
  Since the integrand is holomorphic in $r$ (this, in particular, uses that $\bb M(\vec b,\sigma)$ depends holomorphic on $(\vec b, \sigma)$), the integral $\int_{-\infty}^\infty \ldots \dd s$ may be replaced by a contour integral $\int_{\Gamma_\varepsilon} \cdot \ldots \dd \omega$ where $\Gamma_\varepsilon$ is parametrised by
   \[
    \gamma_\varepsilon(s)
     = s + \ii \varepsilon \left( \abs{\lambda}^{1/m} + r^2 + s^2 \right)^{1/m},
     \quad
     s \in (-\infty,\infty)
   \]
  for some (small) $\varepsilon > 0$. Hence,
  \begin{align*}
   K_\lambda(\vec z',\lambda)
    &= \frac{1}{(2\pi)^n} \int_{\S^{n-2}} \int_0^\infty r^{n-2} \int_{\Gamma_\varepsilon} \ee^{\ii \abs{\vec z'} a} \frac{1}{(\abs{\lambda}^{1/m} + a^2 + r^2)^m} F_1(\lambda,a,r,\vec \varphi, y) F_2(\lambda,a,r,\vec \varphi) \dd a \dd r \dd \sigma(\vec \varphi).
  \end{align*}
 Now one may proceed just as in the proof of \cite[Proposition 6.5]{DeHiPr03}.
 Showing that for $\varepsilon_0 > 0$, but small, there is $c > 0$ such that for all $\varepsilon \in (0, \varepsilon_0]$:
  \[
   c \abs{a^2 + r^2}
    \leq r^2 + \abs{s}^2
    \leq \frac{1}{c} (a^2 + r^2),
  \]
 and, moreover, $\abs{\arg (r^2 + a^2)} \leq c \varepsilon$, so that $\abs{\ee^{\ii \abs{\vec z'}a}} \leq \ee^{-\varepsilon\abs{\vec z'}(r + \abs{s} + \abs{\lambda})^{\nicefrac{1}{2m}}}$, and $\abs{\abs{\lambda}^{1/m} + a^2 + r^2}^{\nicefrac{1}{2}} \geq \abs{\lambda}^{\nicefrac{1}{2}m} + r + s$. Moreover, due to $\inf \{\Re z: \, z \in S_+(\vec b,\sigma)\} \geq c_1$, it follows $\abs{F_1(\lambda,a,r,\vec \varphi,y)} \leq \ee^{-c_1 y (\abs{\lambda}^{\nicefrac{1}{2}m} + r + s)}$, and, therefore, one obtains
  \begin{align*}
   \abs{D^{\vec \alpha} K_\lambda(\vec z',y)}
    &\leq M \abs{\lambda}^{\frac{n-2m+\abs{\vec \alpha}}{2m}} p_{2m,\abs{\vec \alpha}}^{n+1}(c \abs{\lambda}^{\nicefrac{1}{2}m}(\abs{\vec z'} + \abs{y})),
    \quad
    \vec z' \in \R^n, \, y > 0, \lambda \in \Sigma_{\pi-\phi}.
  \end{align*}
 This proves the first kernel estimate.
 \newline
 Next, consider the inverse Fourier transform of $\mathcal{F} w_2 = - \ee^{\ii \rho \bb A_0(\vec b,y)} \bb M(\vec b,\sigma) (\mathcal{F} g_\rho^B)(\vec \xi',0)$, i.e.\
  \begin{equation}
   \mathcal{P}_{j,k} K_\lambda^{w,j}
    = - \frac{1}{(2\pi)^n} \int_{\R^n} \ee^{\ii \vec z' \cdot \vec \xi} \ee^{\ii \rho \bb A_0(\vec b,\sigma) y} \bb M(\vec b,\sigma) \mathcal{P}_{j,k} h_j^B(\vec \xi',y) \dd \vec \xi',
   \label{DHP03:6.25}
  \end{equation}
 where $h_j^B = (0, \ldots, 0, h_j, 0, \ldots, 0)^\mathsf{T}$. Then, by Cauchy's theorem and homogeneity considerations
  \begin{align*}
   h_j(\vec \xi', \tilde y)
    &= \sum_{k=0}^{m_j} \rho^{k+1-2m} \int_{\R} \mathcal{B}_j \mathcal{P}_{j,k} (\sigma + \mathcal{A}(\vec b,\sigma))^{-1} \ee^{-\ii \tilde y \rho a} \dd a
    \\
    &= \sum_{k=0}^{m_j} \rho^{k+1-2m} \int_{\Gamma^-} \mathcal{B}_j \mathcal{P}_{j,k} (\sigma + \mathcal{A}(\vec b,\sigma))^{-1} \ee^{-\ii \tilde y \rho a} \dd a,
  \end{align*}
 where $\Gamma^-$ is a closed curve in the open lower complex plane which encircles $- \ii P_+(\vec b,\sigma)$. One then obtains, after using a rotation as above and shifting the path of integration as in \cite{DeHiPr03}, that
  \begin{equation}
   \abs{\mathcal{P}_{j,k} h_j(\vec \xi',\tilde y)}
    \leq C \left(\abs{\lambda} + r^{2m} + s^{2m} \right)^{\nicefrac{k+1-2m}{2m}} \ee^{- c \tilde y (\abs{\lambda} + r^{2m} + s^{2m})^{\nicefrac{1}{2m}}},
    \quad
    \vec \xi' \in \R^n, \, y > 0 
  \end{equation}
 for small $\abs{\operatorname{arg} (r^2 + a^2)}$. For a multi-index $\vec \alpha$ one thus has the estimate
  \begin{align*}
   \abs{D^{\vec \alpha} K_{\lambda}^{w,j}(\vec z',y, \tilde y)}
    &\leq \sum_{k=0}^{m_j} \abs{D^{\vec \alpha} \mathcal{P}_{j,k} K_{\lambda}^{w,j}(\vec z',y, \tilde y)}
    \\
    &\leq M \abs{\lambda}^{\frac{n-2m+1+\abs{\vec \alpha}}{2m}} \int_0^\infty \frac{s^{n-1}}{(1+s)^{2m-1-\abs{\vec \alpha}}} \ee^{-c (s+1) \abs{\lambda}^{1/(2m)} (\abs{\vec z'} + y + \tilde y}) \dd s,
  \end{align*}
 cf.\ \cite[p.\ 80]{DeHiPr03}.
 \end{proof}
 
 \subsection{Solution operators}
 As in \cite[Section 6.4]{DeHiPr03}, $\LL_p$-estimates for $v$ and $w_2$ can be derived via the kernel estimates from the last subsection.
 For this purpose, one introduces the $\LL_p$-realisation for the boundary value problem \eqref{DHP03:6.3}--\eqref{DHP03:6.4} as the closure $A = \overline{A_\mathrm{min}}$ where the minimal operator $A_\mathrm{min}$ is defined by
  \[
   A_\mathrm{min}:
    \dom(A_\mathrm{min})
    = \WW_p^{2m}(\R_+^{n+1};E)
    \rightarrow \LL_p(\R_+^{n+1}; E) \times \prod_{j=1}^m \left(\sum_{k=0}^{m_j} \mathcal{P}_{j,k} \WW_p^{2m-k}(\R_+^{n+1};E) \right)
  \]
 with
  \[
   A_\mathrm{min} u
    = \left( \begin{array}{c} \mathcal{A}(D) \\ \mathcal{B}_1(D) \\ \vdots \\ \mathcal{B}_m (D) \end{array} \right) u.
  \]
 For $\lambda \in \Sigma_{\pi-\phi}$ and functions $g_j \in \sum_{k=0}^{m_j} \mathcal{P}_{j,k} \WW_p^{2m-k}(\R_+^{n+1};E)$ one then first considers the boundary value problem
  \begin{equation}
   \lambda J v + A v
    = \left( \begin{array}{c} 0 \\ \vec g \end{array} \right),
    \label{DHP03:6.27}
  \end{equation}
 where $\vec g = (g_1, \ldots, g_m)^\mathsf{T}$ and $J v = (v, 0, \ldots, 0)^\mathsf{T}$. For $K_\lambda^j := (K_\lambda)_{1,j}$ with $K_\lambda$ the inverse Fourier transform of $\mathcal{F} v_\rho^{2m}$ as defined in \eqref{DHP03:6.22}, $j = 1, \ldots, m$, one then gets the following result.
  \begin{proposition}
  \label{DHP03:Prop_6.8}
   Let $\lambda \in \Sigma_{\pi-\phi}$ and $g_j \in \sum_{k=0}^{m_j} \mathcal{P}_{j,k} \WW_p^{2m-k}(\R_+^{n+1};E)$ for $j = 1, \ldots, m$.
   Then there is a unique solution $v \in \WW_p^{2m-1}(\R_+^{n+1};E) \cap \dom(A)$ of \eqref{DHP03:6.27} which is given by $v = \sum_{j=1}^m \sum_{k=0}^{m_j} v^{j,k}$ for
    \begin{align*}
     v^{j,k}(\vec z',y)
      &= S_\lambda^{j,k} g_j(\vec z',y)
      \\
      &= - \int_0^\infty \int_{\R^n} \frac{\partial}{\partial \tilde y} K_\lambda^{j,k}(\vec z' - \tilde {\vec z}',y + \tilde y)(- \Delta_{\R^n} + \abs{\lambda}^{1/m})^{\frac{2m-k}{2m}} \mathcal{P}_{j,k} g_j(\tilde {\vec z}', \tilde y) \dd \tilde {\vec z}' \dd \tilde y
      \\ &\quad
      - \int_0^\infty \int_{\R^n} K_\lambda^{j,k} (\vec z' - \tilde {\vec z}', y + \tilde y) \frac{\partial}{\partial \tilde y} (- \Delta_{\R^n} + \abs{\lambda}^{1/m})^{\frac{2m-k}{2m}} \mathcal{P}_{j,k} g_j(\tilde {\vec z}', \tilde y) \dd \tilde {\vec z}' \tilde y
      \\
      &=: S_\lambda^{I,j,k} g_j(\vec z',y) + S_\lambda^{II,j,k}(\vec z',y),
      \quad
      \vec z' \in \R^n, \, y > 0.
    \end{align*}
   Here, $\Delta_{\R^n}$ denotes the $\LL_p(\R^n)$-realisation of the Laplacian and $K_\lambda$ is defined by \eqref{DHP03:6.22}.
   Moreover, there is $C > 0$ such that for $\lambda \in \Sigma_{\pi-\phi}$ and $j = 1, \ldots, m$ the following estimates hold true:
    \begin{align*}
     \norm{\lambda^{1-\frac{\abs{\vec \alpha}}{2m}} D^{\vec \alpha} S_\lambda^{I,j,k} g_j}_{\LL_p(\R_+^{n+1};E)}
      &\leq C \norm{(- \Delta_{\R^n} + \abs{\lambda}^{1/m})^{\frac{2m-k}{2}} \mathcal{P}_{j,k} g_j}_{\LL_p(\R_+^{n+1};E)},
      &&0 \leq \abs{\vec \alpha} \leq 2m-1,
      \\
     \norm{\lambda^{1-\frac{\abs{\vec \alpha}}{2m}} D^{\vec \alpha} S_\lambda^{II,j,k} g_j}_{\LL_p(\R_+^{n+1};E)}
      &\leq C \norm{(- \Delta_{\R^n} + \abs{\lambda}^{1/m})^{\frac{2m-k-1}{2}} \tfrac{\partial}{\partial y} \mathcal{P}_{j,k} g_j}_{\LL_p(\R_+^{n+1};E)},
      &&0 \leq \abs{\vec \alpha} \leq 2m-1,
      \\
     \norm{D^\alpha S_\lambda^{I,j,k} g_j}_{\LL_p(\R_+^{n+1};E)}
      &\leq C \abs{\lambda}^{\frac{\abs{\vec \alpha} - k}{2m}} \norm{\mathcal{P}_{j,k} g_j}_{\LL_p(\R_+^{n+1};E)},
      &&0 \leq \abs{\vec \alpha} \leq k-1 \leq m_j - 1,
      \\
     \norm{D^\alpha S_\lambda^{II,j,k} g_j}_{\LL_p(\R_+^{n+1};E)}
      &\leq C \abs{\lambda}^{\frac{\abs{\vec \alpha} - k-1}{2m}} \norm{\tfrac{\partial}{\partial y} \mathcal{P}_{j,k} g_j}_{\LL_p(\R_+^{n+1};E)},
      &&0 \leq \abs{\vec \alpha} \leq k \leq m_j.
    \end{align*}
  \end{proposition}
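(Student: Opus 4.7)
The plan is to adapt \cite[Proposition 6.8]{DeHiPr03} to our decomposition $\mathcal{B}_j = \sum_{k=0}^{m_j} \mathcal{B}_{j,k}$ with $\mathcal{B}_{j,k}$ homogeneous of degree $k$ acting only on $\ran(\mathcal{P}_{j,k})$. By mutual orthogonality $\mathcal{P}_{j,k}\mathcal{P}_{j,k'}=0$ for $k\ne k'$ and linearity of \eqref{DHP03:6.27}, it suffices to solve, for each pair $(j,k)$, the subproblem whose only non-trivial boundary datum is $\mathcal{P}_{j,k} g_j$; the superposition $v = \sum_{j,k} v^{j,k}$ then provides the full solution, and uniqueness follows from the Lopatinskii--Shapiro condition, which uniquely determines $\mathcal{F} v$ on the Fourier side.

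For each $(j,k)$ I would start from the representation $\mathcal{F} v = \mathcal{F} v_\rho^{2m}\,\mathcal{F} g_\rho^{2m}$ from the preceding proposition. The weighting factor $\rho^{2m-k} = (\abs{\lambda}^{1/m}+\abs{\vec\xi'}^2)^{(2m-k)/2}$ in $\mathcal{F} g_\rho^{2m}$ is precisely the tangential Fourier symbol of $(-\Delta_{\R^n}+\abs{\lambda}^{1/m})^{(2m-k)/2}$, and can therefore be moved onto the boundary data $\mathcal{P}_{j,k} g_j$ via multiplier calculus. The resulting inverse partial Fourier transform produces an expression supported at $\tilde y = 0$. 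To pass to a volume representation on $\R_+^{n+1}$, I use the fundamental theorem of calculus: for any $h$ vanishing as $\tilde y\to\infty$,
\[
h(\vec z',0) = -\int_0^\infty \partial_{\tilde y} h(\vec z',\tilde y)\dd \tilde y,
\]
and distributing this $\partial_{\tilde y}$ either onto the kernel or onto the data yields precisely the two summands $S_\lambda^{I,j,k}$ and $S_\lambda^{II,j,k}$ claimed in the statement.

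The analytically most demanding step is the derivation of the four $\LL_p$-estimates. Here I would exploit the pointwise kernel bound from Proposition \ref{DHP03:Prop_6.6},
\[
\abs{D^{\vec\alpha} K_\lambda(\vec z',y)} \leq M\,\abs{\lambda}^{(n-2m+\abs{\vec\alpha})/(2m)}\,p_{2m,\abs{\vec\alpha}-1}^{n+1}\bigl(c\abs{\lambda}^{1/(2m)}(\abs{\vec z'}+y)\bigr),
\]
combined with Young's inequality for convolutions on $\R^n$. The normal integration in $\tilde y$ is absorbed by the identity
\[
\int_0^\infty p_{m,k}^n\bigl(c(y+r)\bigr)\,r^{n-1}\dd r = \frac{(n-1)!}{c^n}\,p_{m+n,k}^n(cy)
\]
from the preceding lemma, which collapses the double integral into a single $p^{n+1}$-profile whose $\LL_p$-norm in $y$ supplies the correct power of $\abs{\lambda}$. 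The four estimates differ only in how much regularity of $g_j$ is exploited versus how many derivatives are transferred onto $K_\lambda$, each such transfer costing the familiar factor $\abs{\lambda}^{1/(2m)}$.

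The main obstacle will be the careful bookkeeping of exponents: the powers of $\rho$ absorbed into the fractional Laplacian, the $\abs{\lambda}^{1/(2m)}$-scaling inside $p_{2m,\cdot}^{n+1}$, the derivative count $\abs{\vec\alpha}$, and the $\tilde y$-integration must combine to exactly the homogeneities $\abs{\lambda}^{1-\abs{\vec\alpha}/(2m)}$ and $\abs{\lambda}^{(\abs{\vec\alpha}-k)/(2m)}$ claimed in the estimates. A secondary, but conceptually important point is that $\mathcal{P}_{j,k}\in\B(E)$ acts on the target space and therefore commutes with $(-\Delta_{\R^n}+\abs{\lambda}^{1/m})^{(2m-k)/2}$, so that the fractional power is well-defined on $\ran(\mathcal{P}_{j,k})$ without any loss of regularity; this is what makes the decomposition into subproblems indexed by $(j,k)$ rigorous and allows the proof of \cite[Proposition 6.8]{DeHiPr03} to carry through term-by-term.
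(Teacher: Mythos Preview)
Your proposal is correct and follows essentially the same route as the paper: decompose by $(j,k)$, derive the volume representation of $S_\lambda^{j,k}$ from $\mathcal{F}v = \mathcal{F}v_\rho^{2m}\,\mathcal{F}g_\rho^{2m}$ via the fundamental theorem of calculus in $\tilde y$, and then combine the pointwise kernel bounds of Proposition~\ref{DHP03:Prop_6.6} with the $p_{m,k}^n$-integration identity to obtain the four $\LL_p$-estimates (the paper phrases the convolution step as Minkowski's inequality together with \cite[Lemma~6.7]{DeHiPr03} rather than Young's inequality, but this is the same mechanism). The one ingredient you do not mention is the closing density argument: the paper approximates $g_{j,k}\in\WW_p^{2m-k}$ by $g_{j,k}^\nu\in\WW_p^{2m-k+1}$, for which $v^\nu\in\WW_p^{2m}\subseteq\dom(A)$ is immediate, and then uses the estimates together with closedness of $A$ to conclude $v\in\WW_p^{2m-1}\cap\dom(A)$; without this step the representation formula and estimates alone do not yet yield the asserted regularity and membership in $\dom(A)$.
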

  
  \begin{proof}
   Analogously to the proof of \cite[Proposition 6.8]{DeHiPr03}, one sets
    \begin{align*}
     v^{j,k}(\vec z',y)
      &= - \int_0^\infty \int_{\R^n} \frac{\partial}{\partial \tilde y} K_\lambda^{j,k}(\vec z' - \tilde {\vec z}', y + \tilde y) (- \Delta_{\R^n} + \abs{\lambda}^{1/m})^{\frac{2m-k}{2m}} g_{j,k}(\vec z',\tilde y) \dd \tilde {\vec z}' \dd y
       \\ &\quad
       - \int_0^\infty \int_{\R^n} K_\lambda^{j,k}(\vec z' - \tilde {\vec z}', y + \tilde y) \frac{\partial}{\partial \tilde y} (- \Delta_{\R^n} + \abs{\lambda}^{1/m})^{\frac{2m-k}{2m}} g_{j,k}(\tilde {\vec z}', \tilde y) \dd \tilde {\vec z}' \tilde y
       \\
      &= S_\lambda^{I,j,k} g_{j,k}(\vec z',y) + S_\lambda^{II,j,k} g_{j,k}(\vec z',y).
    \end{align*}
   As in the proof of \cite[Proposition 6.8]{DeHiPr03}, by using Proposition \ref{DHP03:Prop_6.5}, we then find that
    \begin{align*}
     \norm{D^{\vec \alpha} \frac{\partial}{\partial \tilde y} K_\lambda^{j,k}(\cdot,\tilde y)}_{\LL_1(\R^n;\B(E))}
      &\leq M \abs{\lambda}^{\frac{n-2m+\abs{\vec \alpha}}{2m}} \int_{\R^n} p_{2m,\abs{\vec \alpha}}^{n+1}(c \abs{\lambda}^{\nicefrac{1}{2m}} \tilde y) \dd \vec z'
      \\
      &\leq C \abs{\lambda}^{\frac{n-2m+\abs{\vec \alpha}}{2m}} \int_0^\infty p_{2m,\abs{\vec \alpha}}^{n+1}(c \abs{\lambda}^{\nicefrac{1}{2m}} (r + \tilde y)) r^{n-1} \dd r
      \\
      &= n! C \abs{\lambda}^{\frac{\abs{\vec \alpha} - 2m}{2m}} p_{2m+n,\abs{\vec \alpha}}^{n+1}(c \abs{\lambda}^{\nicefrac{1}{2m}} \tilde y),
      \quad
      \tilde y > 0.
    \end{align*}
   Then, the first estimate claimed follows by Minkowski's inequality and \cite[Lemma 6.7]{DeHiPr03}.
  Similarly, for $S_\lambda^{II,j,k}$ we write
   \[
    \mathcal{F} K_\lambda^{II}(\vec \xi',y)
     = (\mathcal{F} K_\lambda)(\vec \xi',y) \left( \abs{\vec \xi'}^2 + \abs{\lambda}^{\nicefrac{1}{m}} \right)^{\nicefrac{1}{2}},
     \quad
     K_{\lambda}^{II,j,k}
     = \mathcal{P}_{j,k} (K_\lambda^{II})_{1,j}
   \]
  for $j = 1, \ldots, m$ and $k = 0, 1, \ldots, m_j$, to obtain the representation
   \[
    S_\lambda^{II,j,k}(\vec z',y)
     = \int_0^\infty \int_{\R^n} K_\lambda^{II,j,k}(\vec z' - \tilde {\vec z}', y + \tilde y) \frac{\partial}{\partial \tilde y} (- \Delta_{\R^n} + \abs{\lambda}^{\nicefrac{1}{m}})^{\frac{2m-k-1}{2m}} g_{j,k}(\tilde {\vec z}', \tilde y) \dd \tilde {\vec z}' \dd \tilde y.
   \]
  Proposition \ref{DHP03:Prop_6.5} then gives the estimate
   \[
    \abs{D^{\vec \beta} K_\lambda^{II}(\vec z',y)}
     \leq M \abs{\lambda}^{\frac{n-2m+1+\abs{\vec \beta}}{2m}} p_{2m,\abs{\vec \beta}}^{n+1} (c \abs{\lambda}^{\nicefrac{1}{2}m} (\abs{\vec z'} + \tilde y)),
     \quad
     0 \leq \abs{\vec \beta} \leq 2m,
   \]
  so that
   \[
    \norm{D^{\vec \beta} K_\lambda^{II,j,k}(\cdot,y)}_{\LL_1(\R^n;E)}
     \leq M \abs{\lambda}^{\frac{\abs{\vec \beta} + 1 - 2m}{2m}} p_{2m+n,\abs{\vec \beta}}^{n+1}(c \abs{\lambda}^{\nicefrac{1}{2}m} \tilde y),
     \quad \tilde y > 0, \, \lambda \in \Sigma_{\pi-\phi}.
   \]
  Then, by \cite[Lemma 6.7]{DeHiPr03},
   \[
    \norm{\lambda^{1-\frac{\abs{\vec \alpha}}{2m}} D^{\vec \alpha} S_\lambda^{II,j,k} g_{j,k}}_{\LL_p(\R^n;E)}
    \leq C \norm{(- \Delta_{\R^n} + \abs{\lambda}^{1/m})^{\frac{2m-k-1}{2m}} \frac{\partial}{\partial y} g_{j,k}}_{\LL_p(\R^n;E)},
   \]
  so that the second estimate has been established.
  By slight modification of the arguments, the remaining estimates can be proved as well.
  The representation and regularity of the solution to \eqref{DHP03:6.27} can then be demonstrated by approximation just as at the end of the proof of \cite[Proposition 6.8]{DeHiPr03}:
  Given $g_{j,k} \in \WW_p^{2m-k}(\R_+^{n+1}; \ran (\mathcal{P}_{j,k}))$, let $g_{j,k}^\nu \in \WW_p^{2m-k+1}(\R_+^{n+1}; \ran(\mathcal{P}_{j,k}))$ with $g_{j,k}^\nu \rightarrow g_{j,k} \in \WW_p^{2m-k}(\R_+^{n+1}; \ran(\mathcal{P}_{j,k}))$ as $\nu \rightarrow \infty$.
  By the estimates above, $v^{j,k,\nu} := (S_\lambda^{I,j,k} + S_\lambda^{II,j,k}) g_{j,k}^\nu \in \WW_p^{2m}(\R_+;\ran(\mathcal{P}_{j,k}))$ with $(\lambda J + A)v^{\nu} = \left( \begin{smallmatrix} 0 \\ \vec g^\nu \end{smallmatrix} \right)$, i.e.\ $v^\nu \in \dom(A)$, and further $v^{j,k,\nu} \rightarrow v^{j,k}$ in $\WW_p^{2m-1}(\R_+^{n+1}; E)$.
  As $A$ is a closed operator, $v \in \dom(A) \cap \WW_p^{2m-1}(\R_+^{n+1}; E)$ with $(\lambda J + A)  v = \left( \begin{smallmatrix} 0 \\ \vec g \end{smallmatrix} \right)$.
  \end{proof}
  
 Similarly, the results for the boundary value problem
  \begin{equation}
   \lambda J v + A v
    = \left( \begin{array}{c} f \\ \vec 0 \end{array} \right)
   \label{DHP03:6.30}
  \end{equation}
 transfer to the situation considered here.
 
 \begin{proposition}
 \label{DHP03:Prop_6.9}
  Let the differential operators $\mathcal{A}(D)$ and $\mathcal{B}_j(D)$ be given as above, $j = 1, \ldots, m$.
  Let $\lambda \in \Sigma_{\pi-\phi}$, and $f \in \LL_p(\R_+^{n+1};E)$.
  Then there exists a unique solution $w \in \WW_p^{2m-1}(\R_+^{n+1};E) \cap \dom(A)$ of \eqref{DHP03:6.30}, which is given by $w = w_0 + \sum_{j=1}^m \sum_{k=0}^{m_j} v^{j,k}$, for $v^{j,k}$ as in the preceeding proposition and $w_0 = P(\lambda + A_{\R^{n+1}})^{-1} E_0 f$.
  Moreover, there is $C > 0$ such that
   \begin{equation}
    \norm{\lambda^{1-\frac{\abs{\vec \alpha}}{2m}} D^{\vec \alpha} w}_{\LL_p(\R_+^{n+1};E)}
     \leq C \norm{f}_{\LL_p(\R_+^{n+1};E)},
     \quad
     0 \leq \abs{\vec \alpha} \leq 2m-1.
    \label{DHP03:6.31}
   \end{equation}
 \end{proposition}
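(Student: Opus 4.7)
The plan is to follow the strategy used in the proof of Proposition \ref{DHP03:Prop_6.8}, now adapted to the situation with $f \neq 0$ but $\vec g = \vec 0$. The solution will be obtained by splitting into a full-space part and a boundary correction part.

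First, I would define the full-space contribution by trivially extending $f$ to $E_0 f \in \LL_p(\R^{n+1};E)$ and setting $w_0 := P(\lambda + A_{\R^{n+1}})^{-1} E_0 f$, where $A_{\R^{n+1}}$ is the $\LL_p$-realisation of $\mathcal{A}(D)$ on the full space. By the full-space theory of \cite{DeHiPr03} (Section 5), $w_0 \in \WW_p^{2m}(\R_+^{n+1};E)$ solves $(\lambda + \mathcal{A}(D))w_0 = f$ in $\R_+^{n+1}$ and one has the maximal regularity estimate $\norm{\lambda^{1-\abs{\vec \alpha}/(2m)} D^{\vec \alpha} w_0}_{\LL_p} \leq C \norm{f}_{\LL_p}$ for $0 \leq \abs{\vec \alpha} \leq 2m$. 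The nonzero boundary traces $g_j^B := - \mathcal{B}_j(D) w_0|_{\partial \R_+^{n+1}} = \sum_{k=0}^{m_j} g_{j,k}^B$ with $g_{j,k}^B = - \mathcal{B}_{j,k}(D) w_0|_{\partial \R_+^{n+1}}$ are the boundary data produced by this extension, and a Fourier representation of $g_{j,k}^B$ via the kernels $h_{j,k}$ from the preceding subsection will be the key tool to estimate them.

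Second, I would apply Proposition \ref{DHP03:Prop_6.8} to the residual boundary value problem with data $\vec g^B = -\mathcal{B}(D) w_0|_{\partial \R_+^{n+1}}$, obtaining functions $v^{j,k} = S_\lambda^{j,k} g_{j,k}^B$, and set $w := w_0 + \sum_{j=1}^m \sum_{k=0}^{m_j} v^{j,k}$. By construction, $(\lambda J + A)w = \binom{f}{\vec 0}$, yielding existence. Uniqueness is immediate from injectivity of $\lambda + A$ on $\dom(A)$, which itself follows from the Lopatinskii--Shapiro condition (as in the proof of the preceding proposition, any solution with $f = 0$ and $\vec g = 0$ has vanishing partial Fourier-Laplace transform).

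Third, for the estimate, I would use the representation \eqref{DHP03:6.19} together with the kernel bound \eqref{DHP03:6.26} of Proposition \ref{DHP03:Prop_6.6}. Writing the convolution in $\vec z'$ and in the $y$-variable of the boundary correction, one obtains, by Minkowski's inequality and Lemma \ref{DHP03:Prop_6.6} (specifically the second identity for $p^n_{m,k}$), that
\begin{equation*}
 \norm{D^{\vec \alpha} K_\lambda^{w,j}(\cdot,y,\tilde y)}_{\LL_1(\R^n;\B(E))}
  \leq C \abs{\lambda}^{\frac{\abs{\vec \alpha}+1-2m}{2m}} p_{2m+n,\abs{\vec \alpha}}^{n+1}\bigl(c \abs{\lambda}^{1/(2m)}(y+\tilde y)\bigr),
\end{equation*}
which combined with the $\LL_p$-bound on $f$ yields $\norm{\lambda^{1-\abs{\vec \alpha}/(2m)} D^{\vec \alpha} \sum_{j,k} v^{j,k}}_{\LL_p} \leq C \norm{f}_{\LL_p}$ for $0 \leq \abs{\vec \alpha} \leq 2m - 1$. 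Together with the full-space bound on $w_0$, this yields \eqref{DHP03:6.31}.

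The main obstacle will be the precise tracking of the projection-adapted orders: because the boundary operator $\mathcal{B}_j(D) = \sum_k \mathcal{B}_{j,k}(D) \mathcal{P}_{j,k}$ has different orders on different invariant subspaces, one must verify that each summand $g_{j,k}^B$ indeed lies in $\mathcal{P}_{j,k} \WW_p^{2m-k}(\R_+^{n+1};E)$ with quantitative control, which requires the Fourier-side computation of $\mathcal{P}_{j,k} h_j$ and the projection-wise kernel estimates from the proof of Proposition \ref{DHP03:Prop_6.6}. The final regularity $w \in \WW_p^{2m-1} \cap \dom(A)$ is then established by the same approximation argument used at the end of the proof of Proposition \ref{DHP03:Prop_6.8}, approximating $f$ by smoother data, noting that the approximations yield solutions in $\WW_p^{2m}$, and passing to the limit using closedness of $A$.
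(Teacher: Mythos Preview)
Your proposal is correct and follows essentially the same approach as the paper, which simply states that the proof can be transferred almost \emph{expressis verbis} from \cite[Proposition 6.9]{DeHiPr03}. Your outline is precisely a fleshed-out version of that transferred argument: full-space resolvent for $w_0$, boundary correction via the preceding proposition, kernel estimate \eqref{DHP03:6.26} for the $\LL_p$-bounds, and the closure argument for the final regularity statement.
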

 \begin{proof}
  The proof can be transferred almost \emph{expressis verbis} from the proof of \cite[Proposition 6.9]{DeHiPr03}.
 \end{proof} 
 
 These results can be summarised as follows, cf.\ \cite[Theorem 6.10]{DeHiPr03}.
 
 \begin{theorem}
 \label{DHP03:Thm_6.10}
  Let $\mathcal{A}(D)$ be a parameter elliptic operator of order $2m$ with  angle of ellipticity $\phi_\mathcal{A} \in [0,\pi)$.
  Let $\phi > \phi_\mathcal{A}$.
  For $j = 1, \ldots, m$ let $\mathcal{B}_j(D)$ be a boundary operator of order $m_j < 2m$ and of the form above.
  Assume that the Lopatinskii--Shapiro condition ${\bf (LS)}$ holds true.
  Let $p \in (1, \infty)$, $E$ be a Banach space, $f \in \LL_p(\R_+^{n+1};E)$ and $g_j \in \sum_{k=0}^{m_j} \mathcal{P}_{j,k} \WW_p^{2m-k}(\R_+^{n+1};E)$ for $j = 1, \ldots, m$. Let $\lambda \in \Sigma_{\pi-\phi}$ and let $A$ be defined as above.
  Then there is a unique solution $u \in \WW_p^{2m-1}(\R_+^{n+1};E) \cap \dom(A)$ of
   \[
    \lambda J u + A u
     = \left( \begin{array}{c} f \\ \vec g \end{array} \right).
   \]
  Moreover, $u$ is given by
   \[
    u
     = P(\lambda + A_{\R^{n+1}})^{-1} E_0 f + \sum_{j=1}^m \sum_{k=0}^{m_j} \left( R_\lambda^{j,k} f + S_\lambda^{j,k} g_j \right)
   \]
  and there is a constant $C > 0$ such that for $0 \leq \abs{\vec \alpha} \leq 2m-1$ and $\lambda \in \Sigma_{\pi-\phi}$ one has
   \begin{align}
    \norm{\lambda^{1-\frac{\norm{\vec \alpha}}{2m}} D^{\vec \alpha} u}_{\LL_p(\R_+^{n+1};E)}
     &\leq C \left( \norm{f}_{\LL_p(\R_+^{n+1};E)} + \sum_{j=1}^m \sum_{k=0}^{m_j} \norm{(- \Delta + \abs{\lambda}^{1/m})^{\frac{2m-k}{2}} \mathcal{P}_{j,k} g_j}_{\LL_p(\R_+^{n+1};E)} 
     \right. \label{DHP03:6.33} \\ &\qquad \left.
     + \sum_{j=1}^m \sum_{k=0}^{m_j} \norm{(-\Delta + \abs{\lambda}^{1/m})^{\frac{2m-k-1}{2}} D_y \mathcal{P}_{j,k} g_j}_{\LL_p^{n+1};E)} \right).
     \nonumber
   \end{align}
 \end{theorem}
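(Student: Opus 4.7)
The plan is to combine Propositions \ref{DHP03:Prop_6.8} and \ref{DHP03:Prop_6.9}, which have already separately handled the cases of vanishing boundary data and vanishing interior data. By the linearity of the problem $\lambda J u + A u = (f, \vec g)^\mathsf{T}$, I split the unknown as $u = w + v$, where $w$ solves $\lambda J w + A w = (f, \vec 0)^\mathsf{T}$ via Proposition \ref{DHP03:Prop_6.9} and $v$ solves $\lambda J v + A v = (0, \vec g)^\mathsf{T}$ via Proposition \ref{DHP03:Prop_6.8}. Each of these has already been shown to admit a unique solution in $\WW_p^{2m-1}(\R_+^{n+1};E) \cap \dom(A)$ with the explicit layer-potential representation claimed.

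The combined representation $u = P(\lambda + A_{\R^{n+1}})^{-1} E_0 f + \sum_{j=1}^m \sum_{k=0}^{m_j} (R_\lambda^{j,k} f + S_\lambda^{j,k} g_j)$ then follows immediately: I identify $\sum_{j,k} S_\lambda^{j,k} g_j$ as the boundary-layer part of Proposition \ref{DHP03:Prop_6.8} applied to $\vec g$, and $w = P(\lambda + A_{\R^{n+1}})^{-1} E_0 f + \sum_{j,k} R_\lambda^{j,k} f$, where $R_\lambda^{j,k}$ is defined as the composition of the boundary layer-potential $S_\lambda^{j,k}$ with the trace map $f \mapsto - \mathcal{P}_{j,k} \mathcal{B}_j(D) P(\lambda + A_{\R^{n+1}})^{-1} E_0 f$, so that on the boundary the contribution of $w_0 := P(\lambda + A_{\R^{n+1}})^{-1} E_0 f$ is cancelled (cf.\ the decomposition leading to \eqref{DHP03:6.19} and \eqref{DHP03:6.25}).

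The a-priori estimate \eqref{DHP03:6.33} is obtained by summing the estimates from the two propositions. The interior contribution from $w$ is controlled via the bound \eqref{DHP03:6.31} of Proposition \ref{DHP03:Prop_6.9} by $\norm{f}_{\LL_p}$. The boundary contribution, after decomposing $g_j = \sum_{k=0}^{m_j} \mathcal{P}_{j,k} g_j$, is estimated using the bounds for $S_\lambda^{I,j,k}$ and $S_\lambda^{II,j,k}$ in Proposition \ref{DHP03:Prop_6.8}; these are tailored exactly to produce the anisotropic norms $\norm{(-\Delta + \abs{\lambda}^{1/m})^{(2m-k)/2} \mathcal{P}_{j,k} g_j}_{\LL_p}$ and $\norm{(-\Delta + \abs{\lambda}^{1/m})^{(2m-k-1)/2} D_y \mathcal{P}_{j,k} g_j}_{\LL_p}$ appearing on the right-hand side of \eqref{DHP03:6.33}.

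Uniqueness reduces to showing that any $u \in \WW_p^{2m-1}(\R_+^{n+1};E) \cap \dom(A)$ solving the homogeneous problem must vanish. Applying the partial Fourier transform in the tangential directions reduces the interior equation to an ODE in $y$ whose only $\LL_p$-admissible solution (decaying as $y \to \infty$, cf.\ \eqref{DHP03:6.11}--\eqref{DHP03:6.12}) belongs to the negative spectral subspace $S_-(\vec b, \sigma)$ of $\bb A_0(\vec b, \sigma)$, and the homogeneous boundary conditions then force this restriction to vanish by the Lopatinskii-Shapiro assumption. The main obstacle compared to the setting of \cite{DeHiPr03} is not existence or uniqueness themselves but the bookkeeping required to track the projections $\mathcal{P}_{j,k}$ consistently across the different orders of homogeneity; once that decomposition has been built into the kernel estimates of Proposition \ref{DHP03:Prop_6.5} and the solution estimates of Proposition \ref{DHP03:Prop_6.8}, the present combination is essentially routine linearity.
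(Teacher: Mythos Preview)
Your proposal is correct and follows essentially the same approach as the paper: the theorem is presented there without an explicit proof, simply as a summary (``These results can be summarised as follows, cf.\ \cite[Theorem 6.10]{DeHiPr03}'') of the preceding Propositions~\ref{DHP03:Prop_6.8} and~\ref{DHP03:Prop_6.9}, and your linear superposition $u = w + v$ together with the identification of $R_\lambda^{j,k}$ as the boundary-correction pieces from Proposition~\ref{DHP03:Prop_6.9} is exactly that summary made explicit.
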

 
 \subsection{$\mathcal{H}^\infty$-calculus}
 
 In this subsection, the results on the $\mathcal{H}^\infty$-calculus of \cite[Section 7.1]{DeHiPr03} are transferred to the situation considered here.
 Combining Theorem \ref{DHP03:Thm_6.10} with \cite[Lemma 7.1]{DeHiPr03}, one first gets the following theorem.
 
 \begin{theorem}
 \label{DHP03:Thm_7.3}
 If, in the situation of Theorem \ref{DHP03:Thm_6.10}, the Banach space $E$ is of class $\mathcal{HT}$, then $u \in \WW_p^{2m}(\R_+^{n+1};E)$ and \eqref{DHP03:6.33} holds true for all $\vec \alpha$ with $\abs{\vec \alpha} = 2m$ as well.
 Moreover, in this case
  \begin{align}
   \norm{D^{\vec \alpha} S_\lambda^{I,j,k} g_j}_{\LL_p(\R_+^{n+1};E)}
    &\leq C \abs{\lambda}^{\frac{\abs{\vec \alpha} - k}{2m}} \norm{\mathcal{P}_{j,k} g_j}_{\LL_p(\R_+^{n+1};E)},
    &&0 \leq \abs{\vec \alpha} \leq k \leq m_j,
    \label{DHP03:7.6}
    \\
   \norm{D^{\vec \alpha} S_\lambda^{II,j,k} g_j}_{\LL_p(\R_+^{n+1};E)}
    &\leq C \abs{\lambda}^{\frac{\abs{\vec \alpha} - k - 1}{2m}} \norm{\tfrac{\partial}{\partial y} \mathcal{P}_{j,k} g_j}_{\LL_p(\R_+^{n+1};E)},
    &&0 \leq \abs{\vec \alpha} \leq k + 1 \leq m_j + 1
    \label{DHP03:7.7}
 \end{align}
 and
 \[
   \dom(A_B)
    = \{u \in \WW_p^{2m}(\R_+^{n+1};E): \, \mathcal{B}_j(D) u = 0 \quad \text{for all } j = 1, \ldots, m\}
 \]
 is the domain of the closure $A_B = \overline{A_B^0}$ of the $\LL_p$-realisation of the boundary value problem \eqref{DHP03:6.30}:
  \begin{align}
   A_B^0 u
    &= \mathcal{A}(D) u,
    \label{DHP03:6.34}
    \\
   \dom(A_B^0)
    &= \{ u \in \WW_p^{2m}(\R_+^{n+1};E): \, \mathcal{B}_j(D) u = 0 \quad \text{for all } j = 1, \ldots, m \}.
  \end{align}
 \end{theorem}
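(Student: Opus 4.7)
The plan is to follow the strategy of \cite[Section 7.1]{DeHiPr03}, namely to upgrade the kernel and solution-operator estimates from Proposition \ref{DHP03:Prop_6.8}, Proposition \ref{DHP03:Prop_6.9} and Theorem \ref{DHP03:Thm_6.10} by means of operator-valued Fourier multiplier theory on UMD (class $\mathcal{HT}$) spaces, and then to deduce the domain characterisation by a standard closure argument. The new input compared with \cite{DeHiPr03} is purely bookkeeping: the projections $\mathcal{P}_{j,k}$ split each boundary operator $\mathcal{B}_j = \sum_{k=0}^{m_j}\mathcal{B}_{j,k}\mathcal{P}_{j,k}$ into pieces each of which is homogeneous in $\vec\xi'$ of its own degree $k$, so the analysis must be carried out separately on each range $\ran(\mathcal{P}_{j,k})$ and reassembled afterwards.

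First I would establish the top-order estimate, i.e.\ extend \eqref{DHP03:6.33} to all multi-indices $\vec\alpha$ with $\abs{\vec\alpha}=2m$. The kernel formulas from the proof of Proposition \ref{DHP03:Prop_6.8} give representations of $S_\lambda^{I,j,k}g_j$ and $S_\lambda^{II,j,k}g_j$ as partial Fourier multipliers in the tangential variable $\vec\xi'$. After the substitution $\rho=(\abs{\lambda}^{1/m}+\abs{\vec\xi'}^2)^{1/2}$, $\vec b=\vec\xi'/\rho$, $\sigma=\lambda/\rho^{2m}$, both the semigroup symbol $\ee^{\ii\rho\bb A_0(\vec b,\sigma)y}$ and the matrix $\bb M(\vec b,\sigma)$ are jointly holomorphic on a neighbourhood of the compact parameter set and have derivatives in $\vec\xi'$ which satisfy Mikhlin-type bounds uniformly in $y>0$ and $\lambda\in\Sigma_{\pi-\phi}$. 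Because $E$ is of class $\mathcal{HT}$, the operator-valued Mikhlin theorem (in the form of \cite[Lemma 7.1]{DeHiPr03}) then yields $\LL_p(\R^n;E)$-boundedness, and integration in $y$ (combined with \cite[Lemma 6.7]{DeHiPr03}) produces the missing top-order $\LL_p$ estimate. The full-space contribution $P(\lambda+A_{\R^{n+1}})^{-1}E_0 f$ is already handled by the $\mathcal{HT}$-theory in \cite[Section 5]{DeHiPr03}.

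Next I would prove the refined boundary estimates \eqref{DHP03:7.6}–\eqref{DHP03:7.7}. The key observation is that the projection $\mathcal{P}_{j,k}$ decouples the problem onto a subspace where the relevant Dirichlet data are of homogeneity degree exactly $k$; consequently, for $\abs{\vec\alpha}\leq k$ one may absorb the derivative $D^{\vec\alpha}$ together with the factor $(-\Delta+\abs{\lambda}^{1/m})^{-(2m-k)/(2m)}$ (resp.\ its analogue with an extra $\partial/\partial y$) into the multiplier symbol without destroying the Mikhlin estimates. A second application of the UMD-multiplier theorem then yields the $\LL_p$-to-$\LL_p$ bound with the stated power of $\abs{\lambda}$. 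The approximation step at the end of the proof of Proposition \ref{DHP03:Prop_6.8} (using $g_j^\nu\in\WW_p^{2m-k+1}$ converging to $g_j$ in $\WW_p^{2m-k}$) then upgrades membership from $\dom(A)\cap\WW_p^{2m-1}$ to $\WW_p^{2m}$.

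Finally, the identification $\dom(A_B)=\{u\in\WW_p^{2m}(\R_+^{n+1};E)\colon \mathcal{B}_j(D)u=0,\,j=1,\dots,m\}$ follows by a routine closure argument: the inclusion $\supseteq$ is trivial from the definition of $A_B^0$, while for $\subseteq$ one takes $u\in\dom(A_B)$, sets $(f,\vec g):=(\lambda J+A)u$ and invokes the (now strengthened) Theorem \ref{DHP03:Thm_6.10} to obtain $u\in\WW_p^{2m}$ with $\mathcal{B}_j(D)u=g_j$; choosing $\lambda$ in the resolvent set identifies $u$ as the unique solution constructed above, and passing to the limit $\vec g\to\vec 0$ yields the claim. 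The main obstacle of the whole argument is therefore Step 1: one has to verify that the splitting $\mathcal{B}_j=\sum_k\mathcal{B}_{j,k}\mathcal{P}_{j,k}$ still produces Mikhlin (and, if a bounded $\mathcal{H}^\infty$-calculus is targeted, $\mathcal{R}$-bounded) symbol families uniformly in $\lambda\in\Sigma_{\pi-\phi}$ despite the loss of overall $\vec\xi'$-homogeneity in $\mathcal{B}_j$. Once this uniform bound is in place—tracked separately on each $\ran(\mathcal{P}_{j,k})$ with its own $\rho^k$-weight—all remaining steps transfer essentially \emph{verbatim} from \cite[Section 7]{DeHiPr03}.
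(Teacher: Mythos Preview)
Your proposal is correct and follows essentially the same route the paper indicates: the paper's entire ``proof'' for this theorem is the single sentence ``Combining Theorem \ref{DHP03:Thm_6.10} with \cite[Lemma 7.1]{DeHiPr03}, one first gets the following theorem,'' and \cite[Lemma 7.1]{DeHiPr03} is precisely the operator-valued Mikhlin multiplier result you invoke. Your elaboration---tracking the projections $\mathcal{P}_{j,k}$ separately with their own homogeneity degree $k$ and $\rho^k$-weight, then reassembling---is exactly the bookkeeping the paper has in mind when it says the results of \cite[Section 7.1]{DeHiPr03} ``transfer to the situation considered here.''
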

 
 Using the kernel estimates of Proposition \ref{DHP03:Prop_6.6}, one may then prove that $A_B$ admits a bounded $\mathcal{H}^\infty$-calculus, see the following analogue to \cite[Theorem 7.4]{DeHiPr03}:
 
 \begin{theorem}
  \label{DHP03:Thm_7.4}
  Let $E$ be a Banach space of class $\mathcal{HT}$ and let $A_B: \dom(A_B) \rightarrow \LL_p(\R_+^{n+1};E)$ be defined as in Theorem \ref{DHP03:Thm_7.3}. Then $A_B \in \mathcal{H}^\infty(\LL_p(\R_+^{n+1};E))$ with $\mathcal{H}^\infty$-angle $\phi_{AB}^\infty \leq \phi_\mathcal{A}$.
 \end{theorem}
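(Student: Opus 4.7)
The plan is to follow the strategy of \cite[Theorem 7.4]{DeHiPr03} almost verbatim, with the only modification being that the boundary solution operators are decomposed according to the homogeneity projections $\mathcal{P}_{j,k}$ and then summed. Concretely, for $\phi > \phi_\mathcal{A}$ and $h \in \mathcal{H}^\infty(\Sigma_{\pi-\phi})$ (with sufficient decay at $0$ and $\infty$ to justify the contour integral; the general case follows by approximation and the convergence lemma for bounded $\mathcal{H}^\infty$-calculus), I would represent
 \[
  h(A_B) f
   = \frac{1}{2\pi \ii} \int_\Gamma h(\lambda) (\lambda + A_B)^{-1} f \dd \lambda,
 \]
where $\Gamma$ is a suitable contour in $\Sigma_{\pi-\phi}$. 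Using Theorem \ref{DHP03:Thm_6.10} with $\vec g = 0$, the resolvent splits as
 \[
  (\lambda + A_B)^{-1} f
   = P (\lambda + A_{\R^{n+1}})^{-1} E_0 f + \sum_{j=1}^m \sum_{k=0}^{m_j} R_\lambda^{j,k} f,
 \]
so that $h(A_B) f = P\, h(A_{\R^{n+1}})\, E_0 f + \sum_{j,k} T_h^{j,k} f$ with $T_h^{j,k} = \frac{1}{2\pi \ii} \int_\Gamma h(\lambda) R_\lambda^{j,k} \dd \lambda$.

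The first summand is handled by the classical full-space result: since $E$ is of class $\mathcal{HT}$, the full-space operator $A_{\R^{n+1}}$ admits a bounded $\mathcal{H}^\infty$-calculus on $\LL_p(\R^{n+1};E)$ with angle $\leq \phi_\mathcal{A}$ (standard, e.g.\ \cite{DeHiPr03}, Section 5), and $E_0$, $P$ are bounded, giving the desired estimate with constant $\lesssim \norm{h}_\infty$. The work is therefore concentrated in the boundary correction operators $T_h^{j,k}$.

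For each $(j,k)$, I would unfold the kernel representation from Proposition \ref{DHP03:Prop_6.6} and (the proof of) Proposition \ref{DHP03:Prop_6.8}: the operator $R_\lambda^{j,k}$ acts on $f$ via a kernel involving $\partial_{\tilde y} K_\lambda^{j,k}$ or $K_\lambda^{j,k}$ together with the boundary kernels $h_{j,k}(\vec \xi', \tilde y)$. Inserting into the Dunford integral and applying Fubini, $T_h^{j,k}$ becomes an integral operator on $\LL_p(\R_+^{n+1};E)$ whose kernel is the contour integral $\frac{1}{2\pi \ii} \int_\Gamma h(\lambda) K_\lambda^{j,k}(\cdots) \dd \lambda$. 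The homogeneity properties entering the kernel estimates \eqref{DHP03:6.24}, \eqref{DHP03:6.26} (which are homogeneous in the variable $\abs{\lambda}^{1/(2m)}(\abs{\vec z'}+y+\tilde y)$) allow a change of variable $\lambda = \rho^{2m} \mu$ with $\rho > 0$, after which $h$ enters only through $\sup_{\mu \in \Gamma} \abs{h(\rho^{2m} \mu)} \leq \norm{h}_\infty$. The resulting scalar kernel bound is of the form $M \norm{h}_\infty \cdot \tilde p_{j,k}(\abs{\vec z'} + y + \tilde y)$ with $\tilde p_{j,k}$ exactly of the type appearing in \cite[Lemma 6.7]{DeHiPr03}. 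The $\mathcal{HT}$-assumption on $E$ together with operator-valued Mihlin/Kalton--Weis multiplier theorems then yield $\LL_p$-boundedness of $T_h^{j,k}$ with norm $\lesssim \norm{h}_\infty$, uniformly over $h$ and in particular independent of the specific projection $\mathcal{P}_{j,k}$.

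The main obstacle is precisely the point where the mixed-order structure comes in: in \cite{DeHiPr03} all boundary operators $\mathcal{B}_j$ are homogeneous of a single degree $m_j$, so the homogeneity scaling argument under $\lambda \mapsto \rho^{2m}\mu$ can be performed once. Here, each $\mathcal{B}_j$ splits into homogeneous pieces $\mathcal{B}_{j,k}$ of different orders $k = 0, 1, \ldots, m_j$, and each piece contributes a kernel with a different power of $\rho$. The remedy is to exploit the disjointness $\mathcal{P}_{j,k}\mathcal{P}_{j,k'}=0$ for $k\neq k'$ to treat each index $(j,k)$ as a separate boundary condition of fixed order $k$, run the Denk--Hieber--Pr\"uss argument on each component, and then sum the finitely many uniform estimates. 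This gives $\norm{h(A_B)}_{\B(\LL_p)} \leq C \norm{h}_\infty$ and, since $\phi > \phi_\mathcal{A}$ was arbitrary, the $\mathcal{H}^\infty$-angle of $A_B$ satisfies $\phi_{A_B}^\infty \leq \phi_\mathcal{A}$, as claimed.
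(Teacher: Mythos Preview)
Your proposal is correct and follows precisely the route the paper indicates: the paper's own proof is a one-line referral to \cite[Theorem 7.4]{DeHiPr03} together with \cite[Lemma 7.1]{DeHiPr03} and an adjusted \cite[Remark 7.2c]{DeHiPr03}, and what you have written is exactly a fleshed-out version of that strategy, with the expected modification of decomposing the boundary contribution over the homogeneity indices $(j,k)$ via the projections $\mathcal{P}_{j,k}$ and summing the finitely many resulting estimates.
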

 
 The proof can be executed similar to the proof of \cite[Theorem 7.4]{DeHiPr03}, using \cite[Lemma 7.1]{DeHiPr03} and an adjusted version of \cite[Remark 7.2c]{DeHiPr03}.
 
 \begin{corollary}
  \label{DHP03:Cor_7.5}
  Let $E$ be a Banach space of class $\mathcal{HT}$.
  Then there is $C > 0$ such that
   \[
    \mathcal{R} \{\lambda^{1-\frac{\abs{\vec \alpha}}{2m}} D^{\vec \alpha} (\lambda + A_B)^{-1}: \, \lambda \in \Sigma_{\pi-\phi}, \, 0 \leq \abs{\vec \alpha} \leq 2m \}
     \leq C,
   \]
  where $\mathcal{R}(M)$ denotes the $\mathcal{R}$-bound of a $\mathcal{R}$-bounded set $M$.
 \end{corollary}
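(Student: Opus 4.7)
\textbf{Proof proposal for Corollary \ref{DHP03:Cor_7.5}.}

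The plan is to deduce the $\mathcal{R}$-bound from the bounded $\mathcal{H}^\infty$-calculus for $A_B$ established in Theorem \ref{DHP03:Thm_7.4}, combined with the domain characterization of Theorem \ref{DHP03:Thm_7.3} and the solution operator representation of Theorem \ref{DHP03:Thm_6.10}. Since $E$ is of class $\mathcal{HT}$ and $p \in (1, \infty)$, the base space $\LL_p(\R_+^{n+1}; E)$ is itself a UMD space, which is the structural ingredient that upgrades $\mathcal{H}^\infty$-boundedness to $\mathcal{R}$-boundedness.

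First, by the Kalton--Weis principle, a sectorial operator on a UMD space with bounded $\mathcal{H}^\infty$-calculus automatically admits an $\mathcal{R}$-bounded $\mathcal{H}^\infty$-calculus. Applied to $A_B$, whose $\mathcal{H}^\infty$-angle is at most $\phi_\mathcal{A} < \phi$, this yields that for every $\theta \in [0,1]$ the family
\[
 \{ \lambda^{1-\theta} A_B^{\theta} (\lambda + A_B)^{-1} : \lambda \in \Sigma_{\pi-\phi} \}
\]
is $\mathcal{R}$-bounded in $\B(\LL_p(\R_+^{n+1}; E))$, because the symbols $z \mapsto \lambda^{1-\theta} z^\theta/(\lambda + z)$ form a uniformly bounded family in $\mathcal{H}^\infty(\Sigma_{\phi'})$ for some $\phi' \in (\phi_\mathcal{A}, \phi)$.

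Next, it remains to trade the fractional power $A_B^{|\vec\alpha|/(2m)}$ for the spatial derivative $D^{\vec\alpha}$. By Theorem \ref{DHP03:Thm_7.3}, $\dom(A_B) \hookrightarrow \WW_p^{2m}(\R_+^{n+1}; E)$, so $D^{\vec\alpha}$ is bounded as a map $\dom(A_B) \to \LL_p(\R_+^{n+1}; E)$ for all $|\vec\alpha| \leq 2m$. Since the bounded $\mathcal{H}^\infty$-calculus of $A_B$ identifies $\dom(A_B^{\theta})$ with the complex interpolation space $[\LL_p, \dom(A_B)]_\theta$, complex interpolation delivers that
\[
 D^{\vec\alpha} A_B^{-|\vec\alpha|/(2m)} \in \B(\LL_p(\R_+^{n+1}; E)),
 \qquad 0 \leq |\vec\alpha| \leq 2m.
\]
Composing this single bounded operator with the $\mathcal{R}$-bounded family from the previous step (taken at $\theta = |\vec\alpha|/(2m)$) and using that $\mathcal{R}$-boundedness is preserved under left and right multiplication by a fixed bounded operator and under finite unions (one for each of the finitely many multi-indices $\vec\alpha$) yields the claim.

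The expected main obstacle is the verification of the interpolation identification $\dom(A_B^{\theta}) = [\LL_p, \dom(A_B)]_\theta$ together with the embedding into the correct Sobolev scale, once $\theta$ crosses the fractional thresholds at which the combined-type boundary conditions $\mathcal{B}_j = \sum_{k=0}^{m_j} \mathcal{B}_{j,k}$ begin to enter the domain characterization. This is precisely the point where the refined estimates \eqref{DHP03:7.6}--\eqref{DHP03:7.7} of Theorem \ref{DHP03:Thm_7.3} and the block-wise representation $u = P(\lambda + A_{\R^{n+1}})^{-1} E_0 f + \sum_{j,k}(R_\lambda^{j,k} f + S_\lambda^{j,k} g_j)$ of Theorem \ref{DHP03:Thm_6.10} must be used component-wise along the projections $\mathcal{P}_{j,k}$, together with the $\mathcal{R}$-bounded version of \cite[Lemma~7.1]{DeHiPr03}; the kernel estimates of Proposition \ref{DHP03:Prop_6.6} then provide the pointwise bounds needed to execute the argument on each subspace $\ran \mathcal{P}_{j,k}$ with the correct Sobolev index $2m - k$.
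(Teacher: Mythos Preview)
Your overall strategy --- upgrade the bounded $\mathcal{H}^\infty$-calculus of Theorem~\ref{DHP03:Thm_7.4} to an $\mathcal{R}$-bounded one via Kalton--Weis, then trade $A_B^{|\vec\alpha|/2m}$ for $D^{\vec\alpha}$ by interpolation --- is natural, but the second step has a genuine gap. The factorization
\[
 \lambda^{1-\theta}D^{\vec\alpha}(\lambda+A_B)^{-1}
  = \bigl(D^{\vec\alpha}A_B^{-\theta}\bigr)\cdot\bigl(\lambda^{1-\theta}A_B^{\theta}(\lambda+A_B)^{-1}\bigr),
  \qquad \theta=\tfrac{|\vec\alpha|}{2m},
\]
requires $D^{\vec\alpha}A_B^{-\theta}$ to be a \emph{single bounded operator} on $\LL_p(\R_+^{n+1};E)$. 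Your interpolation argument only yields that $D^{\vec\alpha}\colon \dom(A_B^\theta)\to \LL_p$ is bounded for the \emph{graph norm} $\|u\|+\|A_B^\theta u\|$; turning this into boundedness of $D^{\vec\alpha}A_B^{-\theta}$ would need $A_B^{-\theta}\colon \LL_p\to\dom(A_B^\theta)$ bounded, i.e.\ $0\in\rho(A_B)$. On the half-space with a homogeneous constant-coefficient symbol one has $0\in\sigma(A_B)$ (already for $-\Delta$), so this fails. Equivalently, you would need the \emph{homogeneous} estimate $\|D^{\vec\alpha}u\|\leq C\|A_B^\theta u\|$ on $\dom(A_B^\theta)$, which complex interpolation between $\LL_p$ and $\dom(A_B)$ does not deliver. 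Shifting to $A_B+\mu$ does not repair the argument either: the symbols $\lambda^{1-\theta}(z+\mu)^\theta/(\lambda+z)$ are not uniformly bounded in $\mathcal{H}^\infty(\Sigma_{\phi'})$ as $\lambda\to 0$.

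The paper gives no explicit proof, but its placement and the surrounding material point to the direct kernel route already used for Theorem~\ref{DHP03:Thm_7.4} and, immediately afterwards, for Proposition~\ref{DHP03:Prop_7.6}: represent $D^{\vec\alpha}(\lambda+A_B)^{-1}$ through the explicit kernels of the preceding subsections, apply the pointwise bounds of Proposition~\ref{DHP03:Prop_6.6}, and invoke the $\mathcal{R}$-boundedness criterion \cite[Lemma~7.1]{DeHiPr03} for integral operators dominated by a fixed $\LL_1$-kernel. This produces the $\mathcal{R}$-bound uniformly for all $\lambda\in\Sigma_{\pi-\phi}$, including $\lambda\to 0$, precisely because the kernel estimates already encode the correct homogeneous $\lambda$-scaling. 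You list exactly these ingredients in your final paragraph, but they are not merely a fix for the mixed-order boundary projections $\mathcal{P}_{j,k}$ --- they are the mechanism that substitutes for the missing invertibility of $A_B$ and should replace, not supplement, the interpolation step.
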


 \subsection{$\mathcal{R}$-bounds for solution operators}
 
 By Corollary \ref{DHP03:Cor_7.5}, the solution map for the boundary value problem \eqref{DHP03:6.30} with $\vec g = \vec 0$ admits $\mathcal{R}$-bounds. Next, one has to consider the boundary value problem \eqref{DHP03:6.30} for $f = 0$ and general $\vec g \neq 0$. By Proposition \ref{DHP03:Prop_6.8}, its solution may be expressed as
  \[
   v = \sum_{j=1}^m \sum_{k=0}^{m_j} S_\lambda^{j,k} \mathcal{P}_{j,k} g_j.
  \]
 Similar to \cite[Section 7.2]{DeHiPr03} one now defines for $\lambda \in \Sigma_{\pi - \phi}$, $j = 1, \ldots, m$ and $k = 0, \ldots, m_j$ the operators
  \begin{align}
   U_\lambda^{j,k}
    &= S_\lambda^{I,j,k} ((-\Delta)^{\frac{2m-k}{2}} + \abs{\lambda}^{\frac{2m-k}{2m}})^{-1},
    \label{DHP03:7.9}
    \\
   V_\lambda^{j,k}
    &= S_\lambda^{II,j,k}((-\Delta)^{\frac{2m-k-1}{2}} + \abs{\lambda}^{\frac{2m-k-1}{2m}})^{-1}.
    \label{DHP03:7.10}
  \end{align}

 These allow for the following $\mathcal{R}$-bounds, cf.\ \cite[Proposition 7.6]{DeHiPr03}.
 \begin{proposition}
 \label{DHP03:Prop_7.6}
 For each $j = 1, \ldots, m$ and $k = 0, 1, \ldots, m_j$, the sets
  \begin{align*}
   \{\lambda^{1-\frac{\abs{\vec \alpha}}{2m}} D^{\vec \alpha} U_\lambda^{j,k}: \, \lambda \in \Sigma_{\pi-\phi}, \, \abs{\vec \alpha} \leq 2m \}
    &\subseteq \B(\LL_p(\R_+^{n+1};E)),
    \\
   \{\lambda^{1-\frac{\abs{\vec \alpha}}{2m}} D^{\vec \alpha} V_\lambda^{j,k}: \, \lambda \in \Sigma_{\pi-\phi}, \, \abs{\vec \alpha} \leq 2m \}
    &\subseteq \B(\LL_p(\R_+^{n+1};E))
  \end{align*}
 are $\mathcal{R}$-bounded.
 \end{proposition}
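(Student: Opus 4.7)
The plan is to follow the proof of \cite[Proposition 7.6]{DeHiPr03}, adapting it to the block-projection structure caused by the boundary operators $\mathcal{B}_j(D) = \sum_{k=0}^{m_j} \mathcal{B}_{j,k}(D) \mathcal{P}_{j,k}$ of mixed order. The strategy rests on three ingredients: a representation of $U_\lambda^{j,k}$ and $V_\lambda^{j,k}$ as tangential operator-valued Fourier multipliers with symbols that become homogeneous of degree zero after the prescribed rescaling; an operator-valued Mikhlin-type multiplier theorem on $\LL_p(\R^n;E)$, available because $E$ is of class $\mathcal{HT}$; and the $\mathcal{R}$-bounded $\mathcal{H}^\infty$-calculus of $-\Delta_{\R^n}$ already used in \cite{DeHiPr03}.

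First, I reduce the claim to a statement about tangential Fourier multipliers. Starting from the representation of $S_\lambda^{I,j,k}$ in Proposition \ref{DHP03:Prop_6.8}, I take the partial Fourier transform in the tangential variables $\vec z'$ and use the identity $\mathcal F K_\lambda^{j,k}(\vec\xi',y) = (\mathcal F v_\rho^{2m})_{1,j,k}(\vec\xi',y)$ together with the rescaling $\rho = (|\vec\xi'|^2 + |\lambda|^{1/m})^{1/2}$, $\vec b = \vec\xi'/\rho$, $\sigma = \lambda/\rho^{2m}$. Under this rescaling the weighted operators $\lambda^{1-|\vec\alpha|/(2m)} D^{\vec\alpha} U_\lambda^{j,k}$ become Fourier multipliers in $\vec z'$, combined with an integration in $y$, whose symbols depend only on $(\vec b,\sigma)$. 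The Bessel-type factor $((-\Delta)^{(2m-k)/2} + |\lambda|^{(2m-k)/(2m)})^{-1}$ in the definition \eqref{DHP03:7.9} is chosen precisely to cancel the weight $\rho^{2m-k}$ appearing in \eqref{DHP03:6.20}, so the resulting multiplier symbol is homogeneous of degree zero in $\rho$. The same rescaling produces the analogous statement for $V_\lambda^{j,k}$, now using Proposition \ref{DHP03:Prop_6.9} and $\partial_y K_\lambda^{j,k}$, whose scaling weight is $\rho^{2m-k-1}$ and is absorbed by \eqref{DHP03:7.10}.

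Next, I need to verify the operator-valued Mikhlin estimate
\[
 \bigl\| |\vec\xi'|^{|\vec\beta|} D_{\vec\xi'}^{\vec\beta} m_\lambda^{\vec\alpha,j,k}(\vec\xi') \bigr\|_{\B(E)} \leq C, \quad \vec\xi' \in \R^n \setminus \{\vec 0\}, \, |\vec\beta| \leq n,
\]
uniformly in $\lambda \in \Sigma_{\pi-\phi}$, where $m_\lambda^{\vec\alpha,j,k}$ is the symbol produced in the first step. This follows from the joint holomorphy of $\bb M(\vec b,\sigma)$ on a complex neighbourhood of the closed unit polydisc, which was established in the adjusted version of \cite[Proposition 6.2]{DeHiPr03} above using the Lopatinskii--Shapiro condition and the spectral separation \eqref{DHP03:6.11}--\eqref{DHP03:6.12}, together with Cauchy estimates, exactly as in \cite[Sec.~7.2]{DeHiPr03}. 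The operator-valued Mikhlin multiplier theorem on the $\mathcal{HT}$-space $\LL_p(\R^n;E)$ then gives $\mathcal R$-boundedness of the associated tangential Fourier multipliers, uniformly in $\lambda$. The remaining $y$- and $\tilde y$-integrations are controlled via the pointwise kernel estimates of Propositions \ref{DHP03:Prop_6.5} and \ref{DHP03:Prop_6.6} combined with \cite[Lemma 6.7]{DeHiPr03}: the scalar $\LL_1(\R_+;\mathrm d\tilde y)$-bound on the tangential operator-valued kernel is lifted to an $\mathcal R$-bound in the operator-valued setting by the standard integration/contraction principle for $\mathcal R$-bounds on $\mathcal{HT}$-spaces.

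The main obstacle will be the uniform Mikhlin bound in the middle step: I must ensure that the presence of the projections $\mathcal{P}_{j,k}$, each attached to a block with its own differentiation order $k < 2m$, neither destroys the joint holomorphy of $\bb M(\vec b,\sigma) \mathcal{P}_{j,k}$ on the polydisc nor spoils the uniform bound. This is precisely the point for which the $\rho^{-k}$-weights were introduced in \eqref{DHP03:6.20}--\eqref{DHP03:6.21} and propagated into the definitions \eqref{DHP03:7.9}--\eqref{DHP03:7.10}: with those weights in place, the block-wise reformulation reduces the Mikhlin estimate for each $(j,k)$-block to the scalar-order homogeneous situation already treated in \cite{DeHiPr03}, and the argument then goes through \emph{expressis verbis}, finally yielding the asserted $\mathcal R$-boundedness for both $\{\lambda^{1-|\vec\alpha|/(2m)} D^{\vec\alpha} U_\lambda^{j,k}\}$ and $\{\lambda^{1-|\vec\alpha|/(2m)} D^{\vec\alpha} V_\lambda^{j,k}\}$.
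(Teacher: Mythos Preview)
Your route is genuinely different from the paper's and, as written, has a real gap. The paper does \emph{not} invoke an operator-valued Mikhlin theorem at all. Instead it represents $D^{\vec\alpha} U_\lambda^{j,k}$ and $D^{\vec\alpha} V_\lambda^{j,k}$ as integral operators on $\LL_p(\R_+^{n+1};E)$ with kernels $D^{\vec\alpha}\partial_y K_\lambda^{U^{j,k}}$, $D^{\vec\alpha}\partial_y K_\lambda^{V^{j,k}}$, and then reduces (via \cite[Proposition 4.1 and Lemma 7.1]{DeHiPr03}) to the pointwise kernel $\mathcal R$-bound
\[
 \mathcal R\bigl\{\lambda^{1-|\vec\alpha|/(2m)} D^{\vec\alpha}\tfrac{\partial}{\partial\tilde y} K_\lambda^{U^{j,k}}(\vec z',y+\tilde y):\lambda\in\Sigma_{\pi-\phi},\,|\vec\alpha|\le 2m\bigr\}
  \le \frac{C}{(|\vec z'|+y+\tilde y)^{n+1}}.
\]
This bound is obtained by writing the kernel, after a rotation and a Cauchy contour shift, as an integral $\int_M F\,\dd\mu$ over a \emph{finite} measure $\mu$, factoring $F=GH$ with $H$ scalar and bounded, and then checking that $G$ ranges over products of the sets $M_1=\{\ee^{\ii\rho\bb A_0(\vec b,\sigma)y}\ee^{cy\rho}\}$, $M_2=\{\bb A_0(\vec b,\sigma)\}$, $M_3=\{\bb M(\vec b,\sigma)\}$. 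The sets $M_2,M_3$ are $\mathcal R$-bounded by \cite[Proposition 3.10]{DeHiPr03} (holomorphic image of a compact parameter set), $M_1$ via a Dunford integral representation plus \cite[Propositions 3.8, 3.10]{DeHiPr03}, and the final $\mathcal R$-bound follows from the integration principle \cite[Proposition 3.8]{DeHiPr03}. The normal variable is thus never separated from the tangential ones; both are handled simultaneously through the singular-kernel criterion.

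The gap in your proposal is the Mikhlin step. For operator-valued symbols on a general $\mathcal{HT}$-space $E$, the hypothesis of Weis' multiplier theorem is an $\mathcal R$-bound on $\{|\vec\xi'|^{|\vec\beta|}D_{\vec\xi'}^{\vec\beta}m_\lambda(\vec\xi')\}$, not the uniform norm bound $\|\cdot\|_{\B(E)}\le C$ you wrote; and to conclude $\mathcal R$-boundedness of the resulting operator family in $\lambda$ you need this $\mathcal R$-bound uniformly over $\lambda$ as well. Cauchy estimates give only norm bounds; the upgrade to $\mathcal R$-bounds again requires \cite[Proposition 3.10]{DeHiPr03}, which you do not mention. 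Moreover, your handling of the $y$-direction (``the scalar $\LL_1$-bound is lifted to an $\mathcal R$-bound by the standard integration/contraction principle'') is not a theorem: an $\LL_1(\dd\tilde y)$-bound on norms of an operator-valued kernel does not by itself yield $\mathcal R$-boundedness of the integral operator. This is exactly why the paper bypasses Mikhlin and uses the kernel-domination criterion \cite[Proposition 4.1]{DeHiPr03}, which is tailored to produce $\mathcal R$-bounds from pointwise $\mathcal R$-bounds dominated by a bounded scalar singular integral kernel.
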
 
 \begin{proof}
  The operators $D^{\vec \alpha} U_\lambda^{j,k}$ and $D^{\vec \alpha} V_\lambda^{j,k}$ may be represented as integral operators on $\LL_p(\R_+^{n+1};E)$ with kernel $D^{\vec \alpha} \frac{\partial}{\partial y} K_\lambda^{U^{j,k}}$, where
   \begin{align*}
    \mathcal{F} K_\lambda^{U^{j,k}}(\vec \xi',\lambda)
     &= \mathcal{F} K_\lambda^{j,k}(\vec \xi',y) (\abs{\vec \xi'}^2 + \abs{\lambda}^{1/m})^{\frac{2m-k}{2m}} (\abs{\vec \xi'}^{2m-k} + \abs{\lambda}^{\frac{2m-k}{2m}})^{-1},
     \\
    \mathcal{F} K_\lambda^{V^{j,k}}(\vec \xi',y)
     &= \mathcal{F} K_\lambda^{II,j,k}(\vec \xi',y) (\abs{\vec \xi'}^2 + \abs{\lambda}^{1/m})^{\frac{2m-k-1}{2}} (\abs{\vec \xi'}^{2m-k} + \abs{\lambda}^{\frac{2m-k-1}{2m}})^{-1},
     \quad
     1 \leq j \leq m, \, 0 \leq k \leq m_j.
   \end{align*}
  By \cite[Proposition 4.1 and Lemma 7.1]{DeHiPr03}, it suffices to prove that there is $C > 0$ such that for all $\vec z' \in \R^n$  and $y, \tilde y > 0$
   \begin{align}
    \mathcal{R} \{ \lambda^{1- \frac{\abs{\vec \alpha}}{2m}} D^{\vec \alpha} \frac{\partial}{\partial \tilde y} K_\lambda^{U^{j,k}}(\vec z', y + \tilde y): \, \lambda \in \Sigma_{\pi-\phi}, \, \abs{\vec \alpha} \leq 2m \}
     &\leq \frac{C}{(\abs{\vec z'} + y + \tilde y)^{n+1}},
     \label{DHP03:7.11}
     \\
    \mathcal{R} \{ \lambda^{1- \frac{\abs{\vec \alpha}}{2m}} D^{\vec \alpha} \frac{\partial}{\partial \tilde y} K_\lambda^{V^{j,k}}(\vec z', y + \tilde y): \, \lambda \in \Sigma_{\pi-\phi}, \, \abs{\vec \alpha} \leq 2m \}
     &\leq \frac{C}{(\abs{\vec z'} + y + \tilde y)^{n+1}}.
     \label{DHP03:7.12}
   \end{align}
  This can be proved as follows, analogously to \cite[Proposition 7.7]{DeHiPr03}.
  For $\vec z' \in \R^n$, $y > 0$, $\lambda \in \Sigma_{\pi-\phi}$, $0 \leq \abs{\vec \alpha} \leq 2m$ and $\abs{\vec \beta} + \gamma = \abs{\vec \alpha}$, one has
   \begin{align}
    &D^{\vec \alpha} \frac{\partial}{\partial y} K_\lambda^{U^{j,k}}(\vec z',y)
     \label{DHP03:7.13}
     \\
     &= \frac{1}{(2\pi)^n} \int_{\R^n} \ee^{\ii \vec z' \cdot \vec \xi} \frac{\ee^{\ii \rho \bb A_0(\vec b,\sigma) y}}{\rho^{2m}}(\vec \xi')^{\vec \beta} (A_0(b,\sigma) \rho)^\gamma \ii \bb A_0(\vec b,\sigma) M(b,\sigma) \rho^{2m-k} J_{\vec \xi',\lambda}^{2m-k} \dd \vec \xi',
     \nonumber
   \end{align}
  where $J_{\vec \xi',\lambda}^{2m-k} = (\abs{\vec \xi'}^{2m-k} + \abs{\lambda}^{\frac{2m-k}{2m}})^{-1}$.
  As in the proof of Proposition \ref{DHP03:Prop_6.5}, one chooses a rotation $\bb Q \in
\R^{n \times n}$ such that $\bb Q \vec z' = (\abs{\vec z'}, 0, \ldots, 0)^\mathsf{T}$ and writes $\bb Q \vec \xi' = (a, r \vec \varphi)$ for some $a \in \R$, $r > 0$ and $\vec \varphi \in \S^{n-2}$. Then
 \begin{align*}
  &D^{\vec \alpha} \frac{\partial}{\partial y} K_\lambda^{U^{j,k}}(\vec z',y)
   \\
   &= \frac{1}{(2\pi)^n} \int_{\S^{n-2}} \int_0^\infty r^{n-2} \int_{-\infty}^\infty \ee^{\ii \abs{\vec z'} a} \frac{\ee^{\ii \rho \bb A_0(\vec b, \sigma) y}}{\rho^{2m-1}} (\vec \xi')^{\vec \beta} (\bb A_0(\vec b,\sigma) \rho)^\gamma \ii \bb A_0(\vec b,\sigma) \bb M(\vec b,\sigma) \rho^{2m-k} J_{\vec \xi',\lambda}^{2m-k} \dd a \dd r \dd \vec \varphi
   \\
   &= \frac{1}{(2\pi)^n} \int_{\S^{n-2}} \int_0^\infty r^{n-2} \int_{\Gamma^-} \ee^{\ii \abs{\vec z'} a} \frac{\ee^{\ii \rho \bb A_0(\vec b, \sigma) y}}{\rho^{2m-1}} (\vec \xi')^{\vec \beta} (\bb A_0(\vec b,\sigma) \rho)^\gamma \ii \bb A_0(\vec b,\sigma) \bb M(\vec b,\sigma) \rho^{2m-k} J_{\vec \xi',\lambda}^{2m-k} \dd a \dd r \dd \vec \varphi,
 \end{align*}
 where as in the proof of Proposition \ref{DHP03:Prop_6.5} $\gamma^-(s) = s + \ii \varepsilon (r + \abs{s} + \abs{\lambda}^{\nicefrac{1}{2}m})$, $s \in \R$ is a parametrisation of $\Gamma^-$, and Cauchy's integral theorem as well as holomorphy of $\bb A_0$ and $M$ have been used.
 Setting $c = \nicefrac{c_1}{2}$ from \eqref{DHP03:6.11}, as in Proposition \ref{DHP03:Prop_6.5} one obtains
  \begin{align*}
   &\abs{\lambda}^{1-\frac{\abs{\vec \alpha}}{2m}} D^{\vec \alpha} \frac{\partial}{\partial y} K_\lambda^{U^{j,k}}(\vec z',y)(\abs{\vec z'} + y)^{n+1}
    \\
    &= \frac{1}{(2\pi)^n} \int_{\S^{n-2}} \int_0^\infty \int_0^\infty F(r,\tau,\vec \varphi, \lambda, \vec z', y) r^{n-2} (\abs{\vec z'} + y)^n \ee^{-\frac{1}{2} (\varepsilon \abs{\vec z'} + cy)(r + \tau)} \dd r \dd \tau \dd \vec \varphi,
  \end{align*}
 where
  \[
   F(r,\tau,\vec \varphi, \lambda, \vec z',y)
    = G(r,\tau, \vec \varphi, \lambda, y) H(\vec z', y, r, \tau, \lambda)
  \]
 with
  \begin{align*}
   G(r,\tau,\vec \varphi, \lambda, y)
    &= \ee^{\ii \rho \bb A_0(\vec b, \sigma) y} \ee^{cy(r + \tau + \abs{\lambda}^{\nicefrac{1}{2}m})} \bb A_0(\vec b,\sigma)^\gamma \bb A_0(\vec b, \sigma) \bb M(\vec b,\sigma),
    \\
   H(\vec z', y, r, \tau, \lambda)
    &= (\abs{\vec z'} + y) \ee^{-(\varepsilon \abs{\vec z'} + c y) \abs{\lambda}^{\nicefrac{1}{2}m}}  \ee^{- \frac{1}{2} (\varepsilon \abs{\vec z'} + cy)(r + \tau)} {\vec \xi'}^{\vec \beta} \rho^\gamma \frac{\lambda^{1-\frac{\abs{\vec \alpha}}{2m}}}{\rho^{2m-1}} J_{\vec \xi',\lambda}^{2m-k}
  \end{align*}
 and where $\rho, \vec b, \sigma$ depend on $r, \tau, \lambda$ and $\abs{\vec \alpha} = \abs{\vec \beta} + \gamma$.
  Setting $M = \S^{n-2} \times \R_+ \times \R_+$, this integral equals $\int_M F(r,\tau,\vec \varphi, \lambda, \vec z', y) \dd \mu(r,\tau, \vec \varphi)$, where $\dd \mu(r,\tau,\vec \varphi) = r^{n-2} (\abs{\vec z'} + y)^n \ee^{-\frac{1}{2}(\varepsilon \abs{\vec z'} + cy)(r + \tau)} \dd r \dd \tau \dd \sigma(\vec \varphi)$. Here, $\mu$ is a finite measure on $M$ and such that $\int_M \dd \mu(r,\tau, \vec \varphi) \leq C$ for some $C > 0$ independent of $\vec z' \in \R^n$ and $y > 0$.
  By \cite[Proposition 3.8]{DeHiPr03} it follows that for $\abs{\vec \beta} + \gamma = \abs{\vec \alpha}$
   \begin{align*}
    &(\abs{\vec z'} + y)^{n+1} \mathcal{R} \{ \lambda^{1-\frac{\abs{\vec \alpha}}{2m}} D^{\vec \alpha} \frac{\partial}{\partial y} K_\lambda^{U^{j,k}}(\vec z',y): \, \lambda \in \Sigma_{\pi-\phi}, \, \abs{\vec \alpha} \leq 2m \}
     \\
     &\leq \mathcal{R} \{ (\abs{\vec z'} + y)^{n+1} \lambda^{1-\frac{\abs{\vec \alpha}}{2m}} D^{\vec \alpha} \frac{\partial}{\partial y} K_\lambda^{U^{j,k}}(\vec z',y): \, \lambda \in \Sigma_{\pi-\phi}, \, \abs{\vec \alpha} \leq 2m, \, \vec z' \in \R^n, \, y > 0 \}
     \\
     &= \mathcal{R} \big\{ \int_M F(r,\tau,\vec \varphi, \lambda, \vec z',y) \dd \mu(r,\tau, \vec \varphi): \, \lambda \geq 0, \, \vec z' \in \R^n, \, \vec \varphi \in \S^{n-2} \big\}.
   \end{align*}
  Since $F = G H$ and $H$ is a bounded scalar function, it is enough to prove $\mathcal{R}$-boundedness of the sets
   \begin{align*}
    M_1
     &= \{ \ee^{\ii \rho \bb A_0(\vec b, \sigma) y}: \lambda \in \Sigma_{\pi-\phi}, \, y,r,\tau > 0 \}],
     \\
    M_2
     &= \{ \bb A_0(\vec b,\sigma): \lambda \in \Sigma_{\pi-\phi}, \, y,r,\tau > 0 \},
     \\
    M_3
     &= \{ \bb M(\vec b,\sigma): \lambda \in \Sigma_{\pi-\phi}, \, y,r,\tau > 0 \}.
   \end{align*}
  Since $M_2$ and $M_3$ are images under holomorphic maps of a compact set, they are $\mathcal{R}$-bounded by \cite[Proposition 3.10]{DeHiPr03}. For $M_1$, setting $u := y (\abs{\lambda}^{\nicefrac{1}{2}m} + r + \tau)$, by Cauchy's integral theorem and for $\Gamma^K$ a compact curve in $\C_0^-$ surrounding $\sigma(\ii \bb A_0(\vec b,\sigma) + c)$ one finds
   \begin{align*}
    \ee^{\ii \rho \bb A_0(\vec b, \sigma) y}  \ee^{(\ii \bb A_0(\vec b,\sigma) + c) u}
     &= \frac{1}{2 \pi \ii} \int_{\Gamma_K} \ee^{\omega u} (\omega - (\ii \bb A_0(\vec b,\sigma + c))^{-1} \dd \omega,
   \end{align*}
  for arbitrary $r, \tau > 0$ and $\lambda \in \Sigma_{\pi-\phi}$.
  By \cite[Propositions 3.8 and 3.10]{DeHiPr03}, $\mathcal{R}$-boundedness of the set $M_1$ follows. The assertion for $V^{j,k}$ can be established in exactly the same way, cf.\ the proof of \cite[Proposition 7.7]{DeHiPr03}.
 \end{proof}
 
 \subsection{Perturbations}
 
 As also situations with spatial dependent parameters of utmost importance, in particular for the localisation procedure by which results on the half-space are transferred to general, but sufficiently regular domains, we consider spatial dependent perturbations of the principle part next, i.e.\ $\mathcal{A}$ and $\mathcal{B}_j$ are assumed to be of the form
  \[
   \mathcal{A}(\vec z, D)
    = \sum_{\abs{\vec \alpha}} a_{\vec \alpha}(\vec z) D^{\vec \alpha}
  \]
 with $a_{\vec \alpha} \in \mathrm{BUC}(\R_+^{n+1};\B(E))$, and
  \[
   \mathcal{B}_j(\vec z,D)
    = \sum_{k=0}^{m_j} \sum_{\abs{\vec \beta} = k} b_{j,\vec \beta}(\vec z) D^{\vec \beta} \mathcal{P}_{j,k},
    \quad
    j = 1, \ldots, m
  \]
 with regularity of the coefficients $b_{j,\vec \beta} \in \mathrm{BUC}^{2m-k}(\R_+^{n+1};E)$ for $\abs{\vec \beta} = k$.
 Throughout this subsection, we assume that $\phi > \phi_\mathcal{A}$, the Banach space $E$ is of class $\mathcal{HT}$ and the Lopatinskii--Shapiro condition is satisfied for every boundary point $\vec z \in \partial \R_+^{n+1}$.
 Now fix $\vec z_0 \in \R_+^{n+1}$ and, as an intermediate step, assume that, for given $\varepsilon > 0$, the coefficients $a_{\vec \alpha}$ and $b_{j,\vec \beta}$ are of uniformly small oscillation, in the sense that
  \begin{align}
   \sup_{\vec z \in \R_+^{n+1}} \sum_{\abs{\vec \alpha} = 2m} \norm{a_{\vec \alpha}(\vec z) - a_{\vec \alpha}(\vec z_0)}
    &< \varepsilon,
    \label{DHP03:7.14}
    \\
   \sup_{\vec z \in \R_+^{n+1}} \sum_{\abs{\vec \beta} = k} \norm{b_{j,\vec \beta}(\vec z) - b_{j,\vec \beta}(\vec z_0)}
    &< \varepsilon,
    \quad
    j = 1, \ldots, m, \, k = 0, \ldots, m_j.
    \label{DHP03:7.15}
  \end{align}
 We may then write $\mathcal{A}$ and $\mathcal{B}_j$ as small perturbations from the constant coefficient case:
  \begin{align*}
   \mathcal{A}(\vec z,D)
    &= \mathcal{A}(\vec z_0,D) + \mathcal{A}^\mathrm{sm}(\vec z,D),
    \\
   \mathcal{B}_j(\vec z,D)
    &= \mathcal{B}_j(\vec z_0,D) + \mathcal{B}_j^\mathrm{sm}(\vec z,D).
  \end{align*}
 In this case, for $\lambda \in \Sigma_{\pi-\phi}$ with $\phi > \phi_\mathcal{A}$ and $f \in \LL_p(\R_+^{n+1};E)$, the variable coefficient problem
  \begin{equation}
   \begin{cases}
    (\lambda + \mathcal{A}(\vec z,D)) u = f
    &\text{in } \R_+^{n+1},
    \\
    (\mathcal{B}_j(\vec z,D) u)(0) = 0
    &\text{on } \partial \R_+^{n+1}, \, j = 1, \ldots, m
   \end{cases}
   \label{DHP03:7.16}
  \end{equation}
  has a unique solution $\vec u \in \WW_p^{2m}(\R_+^{n+1};E)$ if and only if it is the unique solution $\vec u \in \WW_p^{2m}(\R_+^{n+1};E)$ to the problem
  \begin{equation}
   \begin{cases}
    (\lambda + \mathcal{A}(\vec z_0,D)) u = f - \mathcal{A}^\mathrm{sm}(\vec z, D) u
    &\text{in } \R_+^{n+1},
    \\
    \mathcal{B}_j(\vec z_0) u(\vec z,0) = - \mathcal{B}_j^\mathrm{sm}(\vec z,D) u(\vec z,0)
    &\text{on } \partial \R_+^{n+1}, \, j = 1, \ldots, m.
   \end{cases}
   \label{DHP03:7.17}
  \end{equation}
 A procedure as in \cite[Section 7.3]{DeHiPr03}, based on the contraction mapping principle, establishes that for sufficiently small $\varepsilon$ and $\lambda \in \Sigma_{\pi-\phi}$ with large enough modulus, the equations \eqref{DHP03:7.16} and \eqref{DHP03:7.17} admit a unique solution of the form
  \begin{equation}
   u
    = [I + (\lambda + A_{B^0}^0)^{-1} \mathcal{A}^\mathrm{sm} + \sum_{j=1}^m \sum_{k=0}^{m_j} S_\lambda^{j,k} \mathcal{B}_j^\mathrm{sm}]^{-1} (\lambda + A_{B^0}^0)^{-1} f
   \label{DHP03:7.24}
  \end{equation}
 where $A_{B^0}^0 := A(\vec z_0,D)|_{\ker (\mathcal{B}(\vec z_0,D))}$.
 We then set
  \begin{align}
   A_B u
    &= \mathcal{A}(\vec z,D) u,
    \label{DHP03:7.25}
    \\
   \dom(A_B)
    &= \{ u \in \WW_p^{2m}(\R_+^{n+1};E): \, \mathcal{B}_j(\vec z_0, D) u(\vec z',0) = 0, \, \vec z' \in \R^n, \, j = 1, \ldots, m  \}.
   \label{DHP03:7.26}
  \end{align}
 Thus, for $\lambda \in \Sigma_{\pi-\phi}$ large enough, $(\lambda + A_B)$ is invertible with
  \begin{equation}
   (\lambda + A_B)^{-1}
    = (\lambda + A_{B^0}^0)^{-1} - (\lambda + A_{B^0}^0)^{-1} \mathcal{A}^\mathrm{sm}(\lambda + A_B)^{-1} - \sum_{j=1}^m \sum_{k=0}^{m_j} S_\lambda^{j,k} \mathcal{P}_{j,k} \mathcal{B}_j^\mathrm{sm} (\lambda + A_B)^{-1}.
   \label{DHP03:7.27}
  \end{equation}
 
 Concerning $\mathcal{R}$-boundedness, one has the following result.
 
 \begin{lemma}
  There is $\lambda_0 > 0$ such that the set
   \[
    \{ \lambda^{1 - \frac{\abs{\vec \alpha}}{2m}} D^{\vec \alpha} (\lambda + A_B)^{-1}: \, \lambda \in \Sigma_{\pi-\phi}, \, \abs{\lambda} \geq \lambda_0, \, 0 \leq \abs{\vec \alpha} \leq 2m \} \subseteq \B(\LL_p(\R_+^{n+1};E))
   \]
  is $\mathcal{R}$-bounded.
 \end{lemma}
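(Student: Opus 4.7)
The plan is to apply the scaling operator $\lambda^{1-|\vec\alpha|/(2m)} D^{\vec\alpha}$ to both sides of the resolvent identity \eqref{DHP03:7.27} and run a Neumann/absorption argument in the $\mathcal{R}$-norm, using the already-established $\mathcal{R}$-boundedness in the constant-coefficient case (Corollary \ref{DHP03:Cor_7.5}) together with the $\mathcal{R}$-bounds on the boundary solution operators from Proposition \ref{DHP03:Prop_7.6}, and exploiting that both $\mathcal{A}^{\mathrm{sm}}$ and $\mathcal{B}_j^{\mathrm{sm}}$ have uniformly small principal-part coefficients.

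Denote $\kappa:=\mathcal{R}\{\lambda^{1-|\vec\alpha|/(2m)} D^{\vec\alpha}(\lambda+A_B)^{-1} : \lambda\in\Sigma_{\pi-\phi},\;|\lambda|\geq\lambda_0,\;0\le|\vec\alpha|\le 2m\}$, which one may assume finite a priori by truncation. Applying $\lambda^{1-|\vec\alpha|/(2m)} D^{\vec\alpha}$ to \eqref{DHP03:7.27}, the leading term has $\mathcal{R}$-bound $\leq C_0$ by Corollary \ref{DHP03:Cor_7.5}. For the volume correction, expand $\mathcal{A}^{\mathrm{sm}}=\sum_{|\vec\beta|=2m}(a_{\vec\beta}-a_{\vec\beta}(\vec z_0))D^{\vec\beta}$ and factor
\[
\lambda^{1-|\vec\alpha|/(2m)} D^{\vec\alpha}(\lambda+A_{B^0}^0)^{-1}\cdot M_{a_{\vec\beta}-a_{\vec\beta}(\vec z_0)}\cdot D^{\vec\beta}(\lambda+A_B)^{-1};
\]
by multiplicativity of $\mathcal{R}$-bounds, the bound \eqref{DHP03:7.14}, and the fact that $|\vec\beta|=2m$ makes the scaling factor of the rightmost factor equal to $1$, this contributes at most $C_0\varepsilon\cdot\kappa$ to $\kappa$.

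For the boundary correction $S_\lambda^{j,k}\mathcal{P}_{j,k}\mathcal{B}_j^{\mathrm{sm}}(\lambda+A_B)^{-1}$ split $S_\lambda^{j,k}=S_\lambda^{I,j,k}+S_\lambda^{II,j,k}$ and use the definitions \eqref{DHP03:7.9}--\eqref{DHP03:7.10} to rewrite, for instance, $S_\lambda^{I,j,k}=U_\lambda^{j,k}\bigl((-\Delta_{\R^n})^{(2m-k)/2}+|\lambda|^{(2m-k)/(2m)}\bigr)$ and analogously for $S_\lambda^{II,j,k}$ via $V_\lambda^{j,k}$. The factor $\lambda^{1-|\vec\alpha|/(2m)}D^{\vec\alpha}U_\lambda^{j,k}$ is $\mathcal{R}$-bounded by Proposition \ref{DHP03:Prop_7.6}, and the task reduces to $\mathcal{R}$-bounding
\[
\bigl((-\Delta_{\R^n})^{(2m-k)/2}+|\lambda|^{(2m-k)/(2m)}\bigr)\mathcal{P}_{j,k}\mathcal{B}_j^{\mathrm{sm}}(\lambda+A_B)^{-1}.
\]
Commuting this Bessel-type multiplier past the variable coefficients of $\mathcal{B}_j^{\mathrm{sm}}$ (which lie in $\mathrm{BUC}^{2m-k}$ with principal part of $L^\infty$-oscillation $\leq\varepsilon$ by \eqref{DHP03:7.15}), the principal piece yields a contribution of the form $C\varepsilon\kappa$; commutator and remainder terms produce derivatives landing on the coefficients (which are only bounded, not small) but are accompanied by strictly negative powers of $|\lambda|^{1/(2m)}$ from excess scaling, giving a contribution bounded by $C_1|\lambda_0|^{-\delta}\kappa$ for some $\delta>0$.

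Collecting yields $\kappa\leq C_0+(C_0\varepsilon+C_1\lambda_0^{-\delta})\kappa$; choosing $\varepsilon$ small (as is already required for the inversion leading to \eqref{DHP03:7.24}) and $\lambda_0$ large enough that the parenthesis is $<\tfrac12$, absorption gives $\kappa\leq 2C_0$. The main technical obstacle is the commutator analysis in the boundary step: the multiplier $(-\Delta_{\R^n})^{(2m-k)/2}+|\lambda|^{(2m-k)/(2m)}$ is nonlocal and does not commute with the $\mathrm{BUC}^{2m-k}$ coefficients of $\mathcal{B}_j^{\mathrm{sm}}$, so one needs either a careful Leibniz-style expansion or a paraproduct decomposition to isolate the small-oscillation principal contribution from a lower-order remainder that is tamed only by the largeness of $\lambda_0$.
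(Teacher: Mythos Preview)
Your approach is correct and is precisely the perturbation/absorption argument of \cite[Section 7.3]{DeHiPr03} to which the paper's proof simply refers, with the only modification being the splitting $\mathcal{B}_j=\sum_k \mathcal{B}_{j,k}$ into the homogeneous pieces $\mathcal{B}_j\mathcal{P}_{j,k}$---exactly as the paper indicates. The commutator issue you flag is handled there by the assumed $\mathrm{BUC}^{2m-k}$-regularity of the coefficients $b_{j,\vec\beta}$, which makes the Leibniz-type expansion legitimate and pushes the remainder into lower-order terms carrying the negative power of $|\lambda|^{1/(2m)}$ you describe.
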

 \begin{proof}
  The proof is essentially the same as in \cite[Section 7.3]{DeHiPr03}. Instead of the operators $\mathcal{B}_j(D)$, consider the operators $\mathcal{B}_{j,k}(D) = \mathcal{B}_j(D) \mathcal{P}_{j,k}$ ($k = 0, 1, \ldots, m_j$) and use that $\mathcal{B}_j(D) = \sum_{k=0}^{m_j} \mathcal{B}_{j,k}(D)$.
 \end{proof}
 
 \subsection{Variable coefficients in a half space}

 To generalise the results of the previous subsection to more general spatial variant coefficients, one may use the localisation procedure as in \cite[Section 5.3]{DeHiPr03}. In comparison with \cite[Section 7.4]{DeHiPr03} one has to adjust the notion of the principal part to a definition suitable for the combined type boundary conditions considered here. Throughout this subsection, assume that there are projections $\mathcal{P}_{j,k}$, $j = 1, \ldots, m$, \, $k = 0, 1, \ldots, m_j < 2m$ such that $\mathcal{P}_{j,k} \mathcal{P}_{j,k'} = 0$ for $(j,k) \neq (j',k')$ and assume that $\mathcal{A}$ and $\mathcal{B}_j$ have the form
  \[
   \mathcal{A}(\vec z,D)
    = \sum_{\abs{\vec \alpha} \leq 2m} a_{\vec \alpha}(\vec z) D^{\vec \alpha},
    \quad
   \mathcal{B}_j(\vec z,D)
    = \sum_{k=0}^{m_j} \sum_{\abs{\vec \beta} \leq k} b_{j,k,\vec \beta}(\vec z) D^{\vec \beta} \mathcal{P}_{j,k}.
  \]
 In the following, we denote by $\mathcal{A}_\#$ and $\mathcal{B}_{j,\#}$ their (combined type) principle parts
  \[
   \mathcal{A}_\#(\vec z,D)
    = \sum_{\abs{\vec \alpha} = 2m} a_{\vec \alpha}(\vec z) D^{\vec \alpha},
    \quad
   \mathcal{B}_{j,\#}(\vec z,D)
    = \sum_{k=0}^{m_j} \sum_{\abs{\vec \beta} = k} b_{j,k,\vec \beta}(\vec z) D^{\vec \beta} \mathcal{P}_{j,k}.
  \]
 Using the notation
  \[
   \CC_l(\overline{\Omega})
    := \{f \in \CC(\overline{\Omega}): \, \lim_{\abs{\vec z} \rightarrow \infty} f(\vec z) =: f(\infty) \text{ exists, if } \Omega \text{ is unbounded} \}
  \]
 for the coefficients, we impose the following assumptions.
 
 \begin{assumption}
  \label{DHP03:Assmpt_Sec_7.4}
  \begin{enumerate}
   \item
    Smoothness assumptions:
     \begin{enumerate}
      \item
       $a_{\vec \alpha} \in \CC_l(\overline{\R_+^{n+1}};\B(E))$ for all $\abs{\vec \alpha} = 2m$;
      \item
       $a_{\vec \alpha} \in [\LL_\infty + \LL_{r_k}](\R_+^{n+1};\B(E))$ for $\abs{\vec \alpha} = k < 2m$ with $r_k \geq p$ and $2m - k > \tfrac{n}{r_k}$;
      \item
       $b_{j,\vec \beta} \in \CC^{2m-k}(\partial \R_+^{n+1}; \B(E))$ for each $j = 1, \ldots, m$, $0 \leq k \leq m_j$, $\abs{\vec \beta} = k$.
     \end{enumerate}
   \item
    ellipticity conditions: There is $\phi_\mathcal{A} \in [0,\pi)$ such that
     \begin{enumerate}
      \item
       The principal symbol $\mathcal{A}_\#(\vec z,\vec \xi) = \sum_{\abs{\vec  \alpha} = 2m} a_{\vec \alpha}(\vec z) \vec \xi^{\vec \alpha}$ is parameter elliptic with angle of ellipticity less than $\phi_\mathcal{A}$ for each $\vec z \in \overline{\R_+^{n+1}}$.
      \item
       Lopatinskii--Shapiro condition: For each $\vec z_0 \in \overline{\R_+^{n+1}}$, $\lambda \in \overline{\Sigma_{\pi-\phi}}$ and $\vec \xi' \in \R^n$ with $(\lambda, \vec \xi') \neq (0,\vec 0)$, the initial value problem
        \begin{align*}
         (\lambda + \mathcal{A}_\#(\vec z_0, \vec \xi', D_{n+1}))v(y)
          &= 0,
          &&y > 0,
          \\
         \mathcal{B}_{j,\#}(\vec z_0, \vec \xi', D_{n+1}) v(0)
          &= h_j,
          &&j = 1, \ldots, m
        \end{align*}
       has a unique solution $u \in \CC_0(\R_+;E)$, for each $(h_1, \ldots, h_m)^\mathsf{T} \in E^m$.    
     \end{enumerate}
  \end{enumerate}
 \end{assumption}
 
 The procedure above then gives the following result.
 
 \begin{theorem}
  \label{DHP03:Thm_7.11}
  Let $E$ be a Banach space of class $\mathcal{HT}$, $n,m \in \N$, $p \in (1, \infty)$ and assume that for some $\phi_\mathcal{A} \in [0,\pi)$ the boundary value problem $(\mathcal{A}, (\mathcal{B}_j)_{j=1}^m)$ satisfies Assumption \ref{DHP03:Assmpt_Sec_7.4}.
  Let $A_B$ the $\LL_p(\R_+^{n+1};E)$-realisation of the homogeneous boundary value problem with domain
   \[
    \dom(A_B)
     = \{u \in \WW_p^{2m}(\R_+^{n+1};E): \, \mathcal{B}_j(\vec z,D) u = 0 \, \text{on } \partial \R_+^{n+1}, \, j = 1, \ldots, m \}.
   \]
  Then, for each $\phi > \phi_\mathcal{A}$, there is $\mu_\phi \geq 0$ such that $\mu_\phi + A_B$ is $\mathcal{R}$-sectorial with $\phi_{\mu_\phi + A_B} \leq \phi$.
  In particular, if $\phi_\mathcal{A} < \tfrac{\pi}{2}$, then the parabolic initial-boundary value problem
   \begin{align*}
    \partial_t u + A_B u + \mu_\phi u
     &= f,
     &&t > 0,
     \\
    u(0)
     &= 0
   \end{align*}
  has the property of maximal regularity in $\LL_q(\R_+;\LL_p(\R_+^{n+1};E))$ for each $q \in (1, \infty)$.
 \end{theorem}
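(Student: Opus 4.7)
The plan is to reduce Theorem \ref{DHP03:Thm_7.11} to the small-perturbation situation handled in the preceding subsection by a standard localisation/freezing-of-coefficients argument, then to appeal to the Weis characterisation of $\LL_p$-maximal regularity on $\mathrm{UMD}$ spaces. Write $\mathcal{A} = \mathcal{A}_\# + \mathcal{A}^\mathrm{lo}$ and $\mathcal{B}_j = \mathcal{B}_{j,\#} + \mathcal{B}_j^\mathrm{lo}$, where the lower-order parts $\mathcal{A}^\mathrm{lo}$ and $\mathcal{B}_j^\mathrm{lo}$ collect all terms with $|\vec \alpha| < 2m$ and $|\vec \beta| < k$, respectively. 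By the Sobolev embeddings in Assumption \ref{DHP03:Assmpt_Sec_7.4}(1)(b)--(c) and the trace theorem, these lower-order operators act as relatively $A_B^\#$-bounded perturbations (with the $A_B^\#$-bound zero after an interpolation inequality), so that it suffices to establish the theorem for the pure principal-part operator $(\mathcal{A}_\#,(\mathcal{B}_{j,\#})_{j=1}^m)$; the full operator $A_B$ inherits $\mathcal{R}$-sectoriality from $A_B^\#$ after a sufficiently large shift $\mu$ absorbs the perturbations.

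For the principal-part problem, fix $\phi > \phi_\mathcal{A}$ and $\varepsilon > 0$ to be chosen. By the continuity hypotheses on $a_{\vec \alpha}$ and $b_{j,\vec \beta}$ (including the limits at infinity ensured by the $\CC_l$-assumption), one can cover $\overline{\R_+^{n+1}}$ by finitely many open sets $U_0, U_1, \ldots, U_L$ and pick reference points $\vec z_l \in \overline{U_l}$ such that on each $U_l$ the oscillations \eqref{DHP03:7.14}--\eqref{DHP03:7.15} of the principal coefficients around $\vec z_l$ are smaller than $\varepsilon$; $U_0$ may be chosen as the complement of a large ball so the point at infinity is used as reference. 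Choose a smooth partition of unity $(\varphi_l)_{l=0}^L$ subordinate to $(U_l)$ and cut-off functions $\psi_l$ with $\psi_l \equiv 1$ on $\mathrm{supp}\,\varphi_l$ and $\mathrm{supp}\,\psi_l \subseteq U_l$. For each $l$, define the frozen, small-perturbation operator $A_B^{(l)}$ associated with the coefficients modified outside $U_l$ to equal their values at $\vec z_l$; by \eqref{DHP03:7.27} together with Proposition \ref{DHP03:Prop_7.6} and Corollary \ref{DHP03:Cor_7.5}, each $\lambda + A_B^{(l)}$ is $\mathcal{R}$-sectorial, uniformly in $l$, for $\lambda \in \Sigma_{\pi-\phi}$ with $|\lambda|$ large.

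The candidate resolvent for $\lambda + A_B^\#$ is then assembled as
\[
 R(\lambda) f \;:=\; \sum_{l=0}^L \psi_l \, (\lambda + A_B^{(l)})^{-1} (\varphi_l f),
\]
and one verifies
\[
 (\lambda + A_B^\#) R(\lambda) f \;=\; f + K(\lambda) f, \qquad K(\lambda) f \;=\; \sum_{l=0}^L [\mathcal{A}_\#, \psi_l] (\lambda + A_B^{(l)})^{-1} (\varphi_l f),
\]
where the commutators $[\mathcal{A}_\#,\psi_l]$ are differential operators of order at most $2m-1$. Employing Kahane's contraction principle together with the bounds $\|\lambda^{1-|\vec\alpha|/(2m)} D^{\vec\alpha}(\lambda + A_B^{(l)})^{-1}\| \lesssim 1$ from Corollary \ref{DHP03:Cor_7.5}, one obtains $\mathcal{R}(\{K(\lambda): \lambda \in \Sigma_{\pi-\phi},\, |\lambda|\ge \mu\}) \to 0$ as $\mu \to \infty$, so that $I + K(\lambda)$ is invertible by a Neumann series and $(\lambda + A_B^\#)^{-1} = R(\lambda)(I+K(\lambda))^{-1}$ is $\mathcal{R}$-bounded with the same angle; a symmetric left-parametrix argument shows the matching boundary conditions are indeed satisfied and $\lambda + A_B^\#$ is bijective onto $\LL_p(\R_+^{n+1};E)$. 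The boundary commutators $[\mathcal{B}_{j,\#},\psi_l]$ are treated analogously, using that each $\mathcal{B}_{j,\#}\mathcal{P}_{j,k}$ is of order exactly $k$ and that the mixed Sobolev norms entering the right-hand side of \eqref{DHP03:6.33} are controlled by $\mathcal{R}$-bounds for the auxiliary operators $U_\lambda^{j,k}, V_\lambda^{j,k}$ from Proposition \ref{DHP03:Prop_7.6}.

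The main obstacle will be the treatment of these boundary commutators, because the combined-type structure forces us to keep track of each projection $\mathcal{P}_{j,k}$ separately: multiplication by $\psi_l$ produces a lower-order boundary expression $\sum_{k}\sum_{|\vec\beta|<k} [\,\cdot\,,\psi_l] D^{\vec\beta} \mathcal{P}_{j,k}$ whose contribution to the right-hand side of \eqref{DHP03:6.33} must be absorbed against the fractional-power weights $(-\Delta+|\lambda|^{1/m})^{(2m-k)/2}$. Handling this requires showing, via the kernel bounds in Proposition \ref{DHP03:Prop_6.6} and \cite[Lemma 7.1]{DeHiPr03}, that the resulting maps have an additional factor $|\lambda|^{-1/(2m)}$ as $|\lambda|\to\infty$, which yields smallness and closes the Neumann series. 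Once $\mu_\phi + A_B$ is shown to be $\mathcal{R}$-sectorial of angle $\le \phi$ on $\LL_p(\R_+^{n+1};E)$, the second statement about $\LL_q(\R_+;\LL_p)$ maximal regularity for $\phi_\mathcal{A}<\pi/2$ follows from Weis' theorem on UMD spaces, since $\LL_p(\R_+^{n+1};E)$ inherits the $\mathcal{HT}$ property from $E$.
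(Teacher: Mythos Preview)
Your proposal is correct and follows precisely the approach the paper indicates: the paper does not spell out a proof of this theorem but simply states that it follows from ``the localisation procedure as in \cite[Section 5.3]{DeHiPr03}'', building on the small-oscillation perturbation result of the preceding subsection. Your sketch supplies exactly that argument (freezing coefficients on a finite cover including a neighbourhood of infinity, partition of unity, parametrix plus Neumann series for the commutator remainder, and Weis' theorem for the maximal-regularity conclusion), and your remark that the boundary commutators must be handled for each projection $\mathcal{P}_{j,k}$ separately is the one modification the paper itself highlights when it later proves the domain version in Theorem~\ref{DHP03:Thm_8.2}.
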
  
 
 \subsection{Localisation techniques for domains}
 
 In this subsection the statement of the maximal regularity theorem for domains will be formulated.
 Let $E$ be a Banach space of class $\mathcal{HT}$ and $m, n, m_1, \ldots, m_m$ be natural numbers with $m_j < 2m$.
 Moreover, let $\mathcal{P}_{j,k}$, $j = 1, \ldots, m$, $k = 0, \ldots, m_j$, be projections such that $\mathcal{P}_{j,k} \mathcal{P}_{j,k'} = 0$ for $(j,k) \neq (j',k')$.
 Let $\Omega \subseteq \R^{n+1}$ be a domain with $\CC^{2m}$-boundary, i.e.\ for each $\vec z_0 \in \partial \Omega$ local coordinates exist which are obtained from the original one by rotating and shifting, and such that the positive $x_{n+1}$-axis corresponds to the direction of the outer normal vector $\vec n$ to $\Omega$ at $\vec z_0$. The local coordinates may be chosen such that they depend continuously on  $\vec z_0 \in \partial \Omega$.
 Consider the differential operators 
  \[
   \mathcal{A}(\vec z,D)
    = \sum_{\abs{\vec \alpha} \leq 2m} a_{\vec \alpha}(\vec z) D^{\vec \alpha},
    \quad
   \mathcal{B}_j(\vec z,D)
    = \sum_{k=0}^{m_j} \sum_{\abs{\vec \beta} \leq k} b_{j,\vec \beta}(\vec z) D^{\vec \beta} \mathcal{P}_{j,k}
  \]
 with variable $\B(E)$-valued coefficients which satisfy the following regularity and ellipticity conditions.
 
 \begin{assumption}
  \label{DHP03:Assmpt_Sec_8.1}
  \begin{enumerate}
   \item
    Smoothness conditions:
     \begin{enumerate}
      \item
       $a_{\vec \alpha} \in \CC_l(\overline{\Omega};\B(E))$ for each $\abs{\vec \alpha} = 2m$;
      \item
       $a_{\vec \alpha} \in [\LL_\infty + \LL_{r_k}](\Omega, \B(E))$ for each $\abs{\vec \alpha} = k < 2m$ with $r_k \geq p$ and $2m - k > \tfrac{n}{r_k}$;
      \item
       $b_{j,\vec \beta} \in \CC^{2m-k}(\partial \Omega;\B(E))$ for each $j = 1, \ldots, m$, $0 \leq k \leq m_j$ and $\abs{\vec \beta} = k$.
     \end{enumerate}
   \item
    ellipticity conditions:
    There is $\phi_\mathcal{A} \in [0,\pi)$ such that the following assertions hold true:
     \begin{enumerate}
      \item
       The principal symbol
        \[
         \mathcal{A}_\#(\vec z, \vec \xi)
          = \sum_{\abs{\vec \alpha} = 2m} a_{\vec \alpha}(\vec z) \vec \xi^{\vec \alpha}
        \]
       is parameter elliptic with angle of ellipticity less than $\phi_\mathcal{A}$, for each $\vec z \in \overline{\Omega} \cup \{\infty\}$.
      \item
       Lopatinskii--Shapiro condition:
       For each $\vec z \in \partial \Omega$, and all $\lambda \in \overline{\Sigma}_{\pi -\phi_\mathcal{A}}$, \, $\vec \xi \in \R^{n+1}$ with $\vec \xi \cdot \vec n(\vec z_0) = 0$, $(\lambda, \vec \xi) \neq (0,\vec 0)$, the initial value problem
        \begin{align*}
         \mathcal{A}_\#(\vec z, \vec \xi + \ii n \tfrac{\partial}{\partial y}) v(y \vec n)
          &= 0,
          &&y > 0,
          \\
         \mathcal{B}_j(\vec z, \vec \xi + \ii n \tfrac{\partial}{\partial y}) v(0)
          &= h_j,
          &&j = 1, \ldots, m
        \end{align*}
       has a unique solution $\vec v \in \CC_0(\R_+;E)$, for each $(h_1, \ldots, h_m)^\mathsf{T} \in E^m$.
     \end{enumerate}
  \end{enumerate}
 \end{assumption}
  
  \begin{theorem}
   \label{DHP03:Thm_8.2}
   Let $E$ be a Banach space of class $\mathcal{HT}$, $n, m \in \N$ and $p \in (1, \infty)$.
   Let $\Omega \subseteq \R^{n+1}$ be a domain with compact $\CC^{2m}$-boundary. Suppose that for $\phi_\mathcal{A} \in [0,\pi)$ the boundary value problem $(\mathcal{A}(\vec z, D), (\mathcal{B}_j(\vec z, D))_{j=1}^m)$ satisfies Assumption \ref{DHP03:Assmpt_Sec_8.1}.
   Let $A_B$ denote the $\LL_p(\Omega;E)$-realisation of the boundary value problem with domain
    \[
     \dom(A_B)
      = \{u \in \WW_p^{2m}(\Omega;E): \, \mathcal{B}_j(\vec z,D)u = 0 \, \text{on } \partial \Omega, \, j = 1, \ldots, m \}.
    \]
   Then, for each $\phi > \phi_\mathcal{A}$, there is $\mu_\phi \geq 0$ such that $\mu_\phi + A_B$ is $\mathcal{R}$-sectorial with $\phi_{\mu_\phi + A_B} \leq \phi$.
   In particular, if $\phi_\mathcal{A} < \tfrac{\pi}{2}$, then the parabolic initial-boundary value problem
    \begin{align*}
     \partial_t u + A_B u + \mu_\phi u
      &= f,
      &&t > 0,
      \\
     u(0)
      &= 0
    \end{align*}
   has the property of maximal regularity in $\LL_q(\R_+;\LL_p(\Omega;E))$ for each $q \in (1, \infty)$.
  \end{theorem}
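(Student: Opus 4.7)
The plan is to reduce the problem on $\Omega$ to a finite collection of model problems (full-space and half-space with variable coefficients) by a standard localisation procedure, following the strategy of \cite[Section 8]{DeHiPr03}, but carrying through the combined-type boundary structure that has been set up in the previous subsections. First, I would use compactness of $\partial \Omega$ to fix a finite open cover $\{U_l\}_{l=0}^L$ of $\overline{\Omega}$, where $U_0 \subseteq \Omega$ is an interior patch (possibly including a neighbourhood of $\infty$ if $\Omega$ is unbounded) and, for $l \geq 1$, each $U_l$ is a coordinate chart near $\partial\Omega$ on which the boundary can be flattened by a $\CC^{2m}$-diffeomorphism $\Phi_l: U_l \to V_l \subseteq \overline{\R_+^{n+1}}$ sending $U_l \cap \partial\Omega$ into $\partial \R_+^{n+1}$ and the outer normal $\vec n$ at the centre point into $-\vec e_{n+1}$. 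Let $\{\varphi_l\}$ be a subordinate partition of unity with $\psi_l \in \CC_c^\infty$ satisfying $\psi_l \equiv 1$ on $\operatorname{supp}\varphi_l$.

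Next, under the diffeomorphism $\Phi_l$ the operators $\mathcal{A}(\vec z,D)$ and $\mathcal{B}_j(\vec z,D)$ are transformed into operators $\widetilde{\mathcal{A}}^l(\vec y,D)$ and $\widetilde{\mathcal{B}}_j^l(\vec y,D)$ on the half-space with coefficients of the same regularity class (the projections $\mathcal{P}_{j,k}$ are $\vec z$-independent and therefore transform trivially, while the structure of $\mathcal{B}_{j,\#}$ as a sum over $k$ of parts of order $k$ on $\operatorname{ran}\mathcal{P}_{j,k}$ is preserved because pullback by a $\CC^{2m}$-diffeomorphism preserves the order of a scalar differential operator). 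The Lopatinskii--Shapiro condition pulls back as well, since at each boundary point the condition has been formulated in terms of the outer normal and tangent directions, which are intrinsic. Hence Assumption \ref{DHP03:Assmpt_Sec_7.4} holds for $(\widetilde{\mathcal{A}}^l, (\widetilde{\mathcal{B}}_j^l)_j)$ on $\R_+^{n+1}$, and Theorem \ref{DHP03:Thm_7.11} yields that there is $\mu_l \geq 0$ with $\mu_l + \widetilde A_B^l$ being $\mathcal{R}$-sectorial of angle $\leq \phi$. For the interior chart $U_0$, no boundary appears and the corresponding $\mathcal{R}$-sectoriality is delivered by the full-space theory of \cite[Section 5]{DeHiPr03} under Assumption \ref{DHP03:Assmpt_Sec_8.1}(2a).

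With these local resolvents $R_\lambda^l = (\lambda + \mu_l + \widetilde A_B^l)^{-1}$ at hand, define a parametrix for $(\lambda + \mu + A_B)$ on $\Omega$ by
\[
 \mathcal{R}_\lambda f
  := \sum_{l=0}^L \varphi_l \cdot (\Phi_l^\ast R_\lambda^l \Phi_{l,\ast}(\psi_l f)),
\]
with $\mu = \max_l \mu_l$, where $\Phi_l^\ast$ and $\Phi_{l,\ast}$ denote the pullback and pushforward under $\Phi_l$ (and $\Phi_0 = \mathrm{id}$). A direct computation shows
\[
 (\lambda + \mu + A_B) \mathcal{R}_\lambda f
  = f + T_\lambda f,
\]
where the commutator term $T_\lambda$ collects the lower-order contributions from $[\mathcal{A},\varphi_l]$ together with the frozen-coefficient errors from the localisation of both $\mathcal{A}$ and $\mathcal{B}_j$. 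By the smoothness assumptions on the $a_{\vec\alpha}$ and $b_{j,\vec\beta}$, together with the $\mathcal{R}$-bounded estimates of Proposition \ref{DHP03:Prop_7.6} and Corollary \ref{DHP03:Cor_7.5}, one can make the operator norm (and in fact the $\mathcal{R}$-bound) of $T_\lambda$ on $\LL_p(\Omega;E)$ arbitrarily small by increasing $\mu$ further. Hence $I + T_\lambda$ is invertible by Neumann series on $\{\lambda \in \Sigma_{\pi-\phi}: |\lambda| \geq \mu_\phi\}$, and $(\lambda + \mu_\phi + A_B)^{-1} = \mathcal{R}_\lambda (I + T_\lambda)^{-1}$ inherits $\mathcal{R}$-sectoriality of angle $\leq \phi$ from the local resolvents via the permanence properties of $\mathcal{R}$-bounds. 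The maximal regularity assertion then follows from the Kalton--Weis theorem \cite{KalWei01}.

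\emph{Main obstacle.} The delicate step is the control of $T_\lambda$: the boundary contributions to $T_\lambda$ arise from $\widetilde{\mathcal{B}}_j^l(\mathrm{something})\varphi_l - \varphi_l \widetilde{\mathcal{B}}_j^l$ which, because the boundary operators $\mathcal{B}_j = \sum_k \mathcal{B}_{j,k}\mathcal{P}_{j,k}$ now have mixed order $k \in \{0,1,\dots,m_j\}$, produce terms of several different scaling behaviours in $\lambda$. These must be separately absorbed using the component-wise $\mathcal{R}$-bounds for $S_\lambda^{j,k}$ and $V_\lambda^{j,k}$ of Proposition \ref{DHP03:Prop_7.6}; the crucial point is that the exponent $1 - |\vec\alpha|/(2m)$ in the resolvent estimates exactly matches the $k$-dependent weights in the Neumann trace estimates, so that all commutator errors indeed gain a negative power of $|\lambda|$ and can be made small for $|\lambda|$ large, uniformly in the localisation.
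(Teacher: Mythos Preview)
Your proposal is correct and follows essentially the same approach as the paper: the paper's own proof is a one-sentence reference stating that the localisation procedure of \cite[Section 8.2]{DeHiPr03} carries over almost literally, with the sole modification that one treats the operators $\mathcal{B}_j(D)\mathcal{P}_{j,k}$ separately for each pair $(j,k)$. Your sketch spells out precisely this procedure (cover, flatten, local resolvents from Theorem \ref{DHP03:Thm_7.11} and the full-space theory, parametrix, Neumann series) and correctly identifies the only new point, namely that the commutator and perturbation terms on the boundary must be absorbed component-wise in $k$ using the $\mathcal{R}$-bounds of Proposition \ref{DHP03:Prop_7.6}.
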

  \begin{proof}
  The localisation procedure as presented in \cite[Section 8.2]{DeHiPr03} carries over almost literally; again one has to consider the operators $\mathcal{B}_j(D) \mathcal{P}_{j,k}$ for each pair $(j,k)$ separately, but the technique stays the same.
  \end{proof}
  
 \subsection{Inhomogeneous boundary data}
 In \cite{DeHiPr07}, the authors extended their results in \cite{DeHiPr03} to the case of inhomogeneous boundary data by identifying the optimal spaces for the data to obtain optimal $\LL_p$-estimates, or, slightly more general, optimal $\LL_p$-$\LL_q$-estimates.
 These results are heavily based on the results and techniques of \cite{DeHiPr03} plus multiplier theorems for operators acting on Banach spaces with property $\mathcal{HT}$ (i.e.\ UMD-spaces), on Gagliardo-Nirenberg type estimates as well as on optimal embedding results for Sobolev, Besov and Triebel-Lizorkin spaces. The methods used in \cite{DeHiPr07} translate more or less directly to the situation considered here; the main difference being adjustments on the optimal regularity spaces since the boundary operators do not have the same order $k$ in all subspaces $\ran (\mathcal{P}_{j,k})$.
 We introduce the following assumptions.
   \begin{enumerate}
    \item[{\bf (D)}]
     Assumptions on the data:
      \begin{enumerate}
       \item[(i)]
        $f \in \LL_p(J \times \Omega; E)$,
       \item[(ii)]
        $g_{j,k} \in \WW_p^{(1,2m) \cdot \kappa_{j,k}} (J \times \partial \Omega;E)$, where $\kappa_{j,k} = \frac{2m - k - 1/p}{2m}$, and we then set $g_j = \sum_{k=0}^{m_j} g_{j,k}$.
       \item[(iii)]
        $u_0 \in \BB_{pp}^{2m(1-1/p)}(\Omega;E)$,
       \item[(iv)]
        if $\kappa_{j,k} > 1/p$, then $\mathcal{B}_j(0,\vec z) \mathcal{P}_{j,k} u_0(\vec z) = g_{j,k}(0,\vec z)$ for $\vec z \in \partial \Omega$.
      \end{enumerate}
    \item[{\bf (E)}]
     Ellipticity of the interior symbol:
     For all $t \in J$, $\vec z \in \overline{\Omega}$ and $\vec \xi \in \S^{n-1}$ it holds that
      \[
       \sigma(\mathcal{A}(t,\vec z, \vec \xi)) \subseteq \C_0^+,
      \]
     i.e.\ $\mathcal{A}(t,\vec z,D)$ is \emph{normally elliptic}.
     If $\Omega$ is unbounded, the same condition is imposed at $\vec z = \infty$.
    \item[{\bf (LS)}]
     Lopatinskii-Shapiro condition:
     For all $t \in J$, $\vec z \in \partial \Omega$ and all $\vec \xi \in \R^n$ with $\vec \xi \cdot \vec n(\vec z) = 0$, and all $\lambda \in \overline{\C_0^+}$ such that $(\lambda, \vec \xi) \neq (0, \vec 0)$, the initial value problem
      \begin{align*}
       \lambda v(y) + \mathcal{A}_\#(t,\vec z,\vec \xi + \ii \vec n(\vec z) \tfrac{\partial}{\partial y})v(y)
        &= 0,
        &&y > 0,
        \\
       \mathcal{B}_{j,\#}(t,\vec z, \vec \xi + \ii \vec n(\vec z) \frac{\partial}{\partial y})v(0)
       &= h_j,
       &&j = 1, \ldots, m,
      \end{align*}
     has a unique solution in the class $v \in \CC_0(\R_+;E)$.
    \item[{\bf (SD)}]
     There are $r_l, s_l \geq p$ with $\frac{1}{s_l} + \frac{n}{2m r_l} < 1 - \frac{l}{2m}$ such that
      \begin{align*}
       a_{\vec \alpha}
        &\in \LL_{s_l}(J; [\LL_{r_l} + \LL_\infty](\Omega;\B(E))),
        &&\abs{\vec \alpha} = l < 2m,
        \\
       a_{\vec \alpha}
        &\in \CC_l(J \times \overline{\Omega}; \B(E)),
        &&\abs{\vec \alpha} = 2m.
      \end{align*}
    \item[{\bf (SB)}]
     There are $s_{j,k,l}, r_{j,k,l} \geq p$ with $\frac{1}{s_{j,k,l}} + \frac{n-1}{2m r_{j,k,l}} < \kappa_{j,k} + \frac{l-k}{2m}$ such that
      \[
       b_{j,k,\vec \beta} \in \WW_{s_{j,k,l},r_{j,k,l}}^{(1,2m) \cdot \kappa_{j,k}}(J \times \partial \Omega; \B(E)),
       \quad
       \abs{\vec \beta} = l \leq k \leq m_j.
      \] 
   \end{enumerate}
 The theorem on $\LL_p$-optimality then reads as follows.

 \begin{theorem}[$\LL_p$-$\LL_p$-optimality]
  \label{DHP07:Thm_2.1}
  Let $E$ be a Banach space of class $\mathcal{HT}$ and $\Omega \subseteq \R^n$ be a domain with compact boundary of class $\partial \Omega \in \CC^{2m}$.
  Let $p \in (1, \infty)$ and suppose that assumptions {\bf (E)}, {\bf (LS)}, {\bf (SD)} and {\bf (SB)} hold true. Then the problem \eqref{DHP07:2.2}
  has a unique solution in the class
   \[
    u \in \WW_p^{(1,2)}(J \times \Omega; E)
   \]
  if and only if the data $f$, $\vec g$ and $u_0$ are subject to conditions {\bf (D)}.
 \end{theorem}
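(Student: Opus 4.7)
The plan is to follow the strategy of \cite{DeHiPr07} with the adjustments made in the preceding subsections to accommodate the combined-type boundary operators. The necessity of condition {\bf (D)} for having a solution $u \in \WW_p^{(1,2m)}(J \times \Omega; E)$ follows from trace theory: the mixed space-time trace of a $\WW_p^{(1,2m)}$-function on $J \times \partial \Omega$ lies in $\WW_p^{(1,2m) \cdot (1-\nicefrac{1}{2mp})}$, and applying a differential operator of order $k$ drops the regularity to $\WW_p^{(1,2m) \cdot \kappa_{j,k}}$ exactly for $\kappa_{j,k} = (2m-k-\nicefrac{1}{p})/(2m)$; the compatibility condition (iv) is the standard trace-at-$t=0$ condition, meaningful precisely when $\kappa_{j,k} > \nicefrac{1}{p}$. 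Applying each $\mathcal{P}_{j,k}$ separately -- which is legal since $\mathcal{P}_{j,k}\mathcal{P}_{j,k'} = 0$ and the projections commute with the trace -- decouples the boundary datum $g_j$ into its components $g_{j,k}$ living in the right trace class.

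For sufficiency, I would reduce the problem in three stages. First, using Theorem \ref{DHP03:Thm_8.2}, I handle the interior data and initial condition: the problem
\begin{align*}
 \partial_t u_1 + A_B u_1 + \mu u_1 &= f, & u_1(0) &= u_0
\end{align*}
with homogeneous boundary data admits a unique solution $u_1 \in \WW_p^{(1,2m)}(J \times \Omega; E)$ by maximal regularity, since $\mu_\phi + A_B$ is $\mathcal{R}$-sectorial of angle $<\pi/2$. Setting $\tilde g_{j,k} := g_{j,k} - \mathcal{B}_j(\cdot,D)\mathcal{P}_{j,k} u_1$ reduces the problem to finding $u_2$ with $f = 0$, $u_0 = 0$ and inhomogeneous boundary data $\tilde g_{j,k}$. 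Secondly, I localise and freeze coefficients to reduce further to the half-space constant-coefficient problem with $f=0$, $u_0=0$. For this problem, I already have explicit solution operators from Proposition \ref{DHP03:Prop_6.8}, namely $u = \sum_{j,k}(S_\lambda^{I,j,k} + S_\lambda^{II,j,k})g_{j,k}$, and by Proposition \ref{DHP03:Prop_7.6} the rescaled operators $U_\lambda^{j,k}, V_\lambda^{j,k}$ satisfy $\mathcal{R}$-bounds. Converting these $\mathcal{R}$-bounds into $\LL_p$-$\LL_p$-estimates via the operator-valued Mikhlin/Weis multiplier theorem (applicable because $E$ and $\LL_p$ are $\mathcal{HT}$) yields a bounded solution operator from the trace space $\WW_p^{(1,2m)\cdot\kappa_{j,k}}(J \times \partial\R_+^{n+1}; \ran\mathcal{P}_{j,k})$ into $\WW_p^{(1,2m)}(J \times \R_+^{n+1}; E)$. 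The factors $(-\Delta)^{(2m-k)/2} + |\lambda|^{(2m-k)/(2m)}$ and $(-\Delta)^{(2m-k-1)/2} + |\lambda|^{(2m-k-1)/(2m)}$ appearing in Proposition \ref{DHP03:Prop_6.8} precisely match the order-$k$ trace space, so the exponents $\kappa_{j,k}$ drop out naturally.

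Thirdly, I transfer the half-space result to variable coefficients and to the bounded domain $\Omega$. For variable coefficients close to frozen ones on the half-space, the contraction-mapping perturbation argument of subsection 7.3 (as summarised after \eqref{DHP03:7.24}) applies verbatim, since the regularity assumptions {\bf (SD)} and {\bf (SB)} on the coefficients were chosen so that multiplication by a small-oscillation coefficient is a bounded operation in the relevant trace space; here one has to treat each $\mathcal{B}_{j,k} = \mathcal{B}_j \mathcal{P}_{j,k}$ individually as in the earlier subsection so that each boundary piece sees only perturbations of one fixed order. Finally, a standard partition-of-unity localisation on $\partial\Omega$ as in subsection 8.1, gluing the half-space solutions to a whole-space interior parametrix, delivers the full result.

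The main obstacle, and the reason a naive appeal to \cite{DeHiPr07} does not suffice, is the matching of trace spaces across the different projections $\mathcal{P}_{j,k}$: the principal symbol of $\mathcal{B}_j$ is not homogeneous as a whole but only becomes so after restriction to each $\ran \mathcal{P}_{j,k}$. The key check to make the plan go through is that the Lopatinskii--Shapiro condition {\bf (LS)} genuinely yields invertibility of the boundary symbol matrix coupling all pieces simultaneously (this is the content of Lemma \ref{lem:invertibility} in the concrete application of the paper), and that the scale-dependent rescaling $g_j \mapsto \sum_k g_{j,k}/\rho^k$ in the Fourier representation \eqref{DHP03:6.13} distributes correctly across the projections -- which it does precisely because $\mathcal{P}_{j,k}\mathcal{P}_{j,k'}=0$. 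Once this is in place, all subsequent $\mathcal{R}$-bound and multiplier arguments go through unchanged, and the optimality of the trace spaces $\WW_p^{(1,2m)\cdot \kappa_{j,k}}$ is inherited from the order-$k$ case treated piecewise.
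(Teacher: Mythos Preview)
Your proposal is correct and follows essentially the same approach that the paper indicates: the paper does not give an explicit proof of this theorem but states at the beginning of the subsection that the methods of \cite{DeHiPr07} translate more or less directly, the main adjustment being that the boundary operators are decomposed via the projections $\mathcal{P}_{j,k}$ so that each piece has a well-defined order and hence a well-defined trace space $\WW_p^{(1,2m)\cdot\kappa_{j,k}}$. Your outline --- necessity from trace theory applied to each $\mathcal{P}_{j,k}$-component, reduction to homogeneous interior/initial data via Theorem~\ref{DHP03:Thm_8.2}, half-space solution via the operators of Proposition~\ref{DHP03:Prop_6.8} together with the $\mathcal{R}$-bounds of Proposition~\ref{DHP03:Prop_7.6} and the Mikhlin--Weis multiplier theorem, then perturbation and localisation --- is precisely this strategy made explicit.
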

 
 For $\LL_p$-$\LL_q$-estimates with $p \neq q$, a similar result holds true, but the smoothness conditions on the coefficients and on the data have to be slightly modified, cf.\ \cite[Theorem 2.3]{DeHiPr07}.
  \begin{enumerate}
   \item[{\bf (D1)}]
    Assumptions on the data in case $p \neq q$:
     \item[(i)]
      $f \in \LL_p(J;\LL_q(\Omega;E))$,
     \item[(ii)]
      $g_{j,k} \in F_{pq}^{\kappa_{j,k}}(J;\LL_q(\partial \Omega;E)) \cap \LL_p(J;\BB_{qq}^{2m\kappa_{j,k}}(\partial \Omega;E))$ with $\kappa_{j,k} = \frac{2m-k-1/q}{2m}$, and we then set $g_j = \sum_{k=0}^{m_j} g_{j,k}$,
     \item[(iii)]
      $u_0 \in \BB_{qp}^{2m(1-1/p)}(\Omega;E)$,
     \item[(iv)]
      if $\kappa_{j,k} > 1/q$, then $\mathcal{B}_j(0,\vec z) \mathcal{P}_{j,k}u_0(\vec z) = g_{j,k}(0,\vec z)$ for $\vec z \in \partial \Omega$.
   \item[{\bf (SD1)}]
    There are $s_l \geq p$ and $r_l \geq q$ with $\frac{1}{s_l} + \frac{n}{2m r_l} < 1 - \frac{l}{2m}$ such that
     \begin{align*}
      a_{\vec \alpha}
       &\in \LL_{s_l}(J; (\LL_{r_l} + \LL_\infty)(\Omega;\B(E))),
       &&\abs{\vec \alpha} = l < 2m,
       \\
      a_{\vec \alpha}
       &\in \CC_l(J \times \overline{\Omega}; \B(E)),
       &&\abs{\vec \alpha} = 2m.
     \end{align*}
   \item[{\bf (SB1)}]
    There are $s_{j,k,l} \geq p$ and $r_{j,k,l} \geq q$ with $\frac{1}{s_{j,k,l}} + \frac{n-1}{2m r_{j,k,l}} < \kappa_{j,k} + \frac{k-l}{2m}$ such that
     \[
      b_{j,k,\vec \beta}
       \in \WW_{s_{j,k,l},r_{j,k,l}}^{(1,2m) \cdot \kappa_{j,k}}(J \times \partial \Omega; \B(E))),
       \quad
       \abs{\vec \beta} = l \leq k \leq m_j.
     \]
  \end{enumerate}
  
  \begin{theorem}[$\LL_p$-$\LL_q$-optimality]
  Let $E$ be a Banach space of class $\mathcal{HT}$ and $\Omega \subseteq \R^n$ be a domain with compact boundary of class $\partial \Omega \in \CC^{2m}$.
  Let $p,q \in (1, \infty)$ and suppose that assumptions {\bf (E)}, {\bf (LS)}, {\bf (D1)}, {\bf (SD1)} and {\bf (SB1)} hold true. Then the problem
   \begin{align}
    \partial_t u + \mathcal{A}(t,\vec z,D)u
     &= f(t,\vec z),
     &&t \in J, \, \vec z \in \Omega,
     \nonumber \\
    \mathcal{B}_j(t,\vec z,D)u
     &= g_j(t,\vec z),
     &&t \in J, \, \vec z \in \partial \Omega, \, j = 1, \ldots, m,
     \label{DHP07:2.2}
     \\
    u(0,\vec z)
     &= u_0(\vec z),
     &&\vec z \in \Omega
     \nonumber
   \end{align}
  has a unique solution in the class
   \[
    u \in \WW_{pq}^{(1,2)}(J \times \Omega; E)
   \]
  if and only if the data $f$, $g$ and $u_0$ are subject to conditions {\bf (D1)}.
  \end{theorem}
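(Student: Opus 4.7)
The plan is to adapt the scheme of Theorem \ref{DHP07:Thm_2.1} and the corresponding $\LL_p$--$\LL_q$ result of \cite{DeHiPr07} (Theorem 2.3 there) to the combined-type boundary operators $\mathcal{B}_j = \sum_{k=0}^{m_j} \mathcal{B}_{j,k} \mathcal{P}_{j,k}$ used throughout this appendix. The two directions are handled separately: necessity of {\bf (D1)} is purely trace-theoretic, while sufficiency proceeds via full-space reduction, half-space solution formulas, perturbation, and localization, in direct analogy to the steps already spelled out in the preceding subsections.

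For necessity of condition {\bf (D1)}, I would invoke the anisotropic trace theory for the space $\WW_{pq}^{(1,2m)}(J \times \Omega; E) = \WW_p^1(J; \LL_q(\Omega;E)) \cap \LL_p(J; \WW_q^{2m}(\Omega;E))$. For $u$ in this class, $D^{\vec \beta} u|_{\partial \Omega}$ with $\abs{\vec \beta} = k < 2m$ lies precisely in $F_{pq}^{\kappa_{j,k}}(J; \LL_q(\partial \Omega; E)) \cap \LL_p(J; \BB_{qq}^{2m\kappa_{j,k}}(\partial \Omega; E))$ with $\kappa_{j,k} = \tfrac{2m-k-1/q}{2m}$; the Triebel--Lizorkin component in time reflects the mismatch between the Lebesgue exponents in time and space. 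Summing the pieces of $\mathcal{B}_j$ over $k$, applying $\mathcal{P}_{j,k}$ and using the assumed smoothness {\bf (SB1)} of the coefficients, we obtain $g_{j,k} \in F_{pq}^{\kappa_{j,k}}(J; \LL_q(\partial \Omega;E)) \cap \LL_p(J; \BB_{qq}^{2m \kappa_{j,k}}(\partial \Omega;E))$, and the initial value $u_0$ lies in the real interpolation space $\BB_{qp}^{2m(1-1/p)}(\Omega; E)$ by standard mixed-norm trace theorems. The compatibility (iv) is the usual time-trace compatibility, valid only when $\kappa_{j,k} > 1/q$ so that the time trace of $g_{j,k}$ exists.

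For sufficiency, I would follow the three-step scheme used in the $\LL_p$--$\LL_p$ case. First, extend $u_0$ to a function $v_0 \in \WW_{pq}^{(1,2m)}(J \times \Omega; E)$ realizing the initial datum, and use the full-space result to solve $\partial_t w + \mathcal{A}(t,\vec z,D)w = f - (\partial_t + \mathcal{A})v_0$ with zero initial data on $\R^{n+1}$ (after trivial extension), reducing the problem to $f = 0$, $u_0 = 0$ at the cost of adjusting the boundary data, which remain in the required regularity class by the trace theorem of the first paragraph. Second, in the constant-coefficient half-space case, decompose $g_j = \sum_k \mathcal{P}_{j,k} g_{j,k}$ and use the solution formulas $S_\lambda^{j,k}$ and $R_\lambda^{j,k}$ from Proposition \ref{DHP03:Prop_6.8} together with Theorem \ref{DHP03:Thm_6.10}; the required $\LL_p$--$\LL_q$ mapping properties follow from the kernel estimates in Proposition \ref{DHP03:Prop_6.6} combined with operator-valued Mikhlin multiplier theorems in mixed-norm Lebesgue spaces over UMD-valued targets, using Corollary \ref{DHP03:Cor_7.5} and Proposition \ref{DHP03:Prop_7.6} on $\mathcal{R}$-boundedness as the engine. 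Third, the variable-coefficient half-space and the localization to $\Omega$ proceed by a Neumann series perturbation argument (using the smallness of oscillation exploited via {\bf (SD1)}/{\bf (SB1)}) and the standard partition-of-unity procedure as in Theorems \ref{DHP03:Thm_7.11} and \ref{DHP03:Thm_8.2}; the projections $\mathcal{P}_{j,k}$ merely split each local problem into finitely many subproblems of the type already handled.

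The main obstacle is to obtain the correct $\LL_p$--$\LL_q$ estimates of the boundary solution operators $S_\lambda^{j,k}$ and, in particular, to match them with the Triebel--Lizorkin/Besov norms appearing in {\bf (D1)}. In the $p=q$ case the relevant trace space is simply $\WW_p^{(1,2m)\cdot \kappa_{j,k}}$ and everything is Sobolev; for $p \neq q$ one must carry along the Triebel--Lizorkin characterization of intermediate spaces and verify that the anisotropic multiplier theorems apply to the kernels $\partial_{\tilde y} K_\lambda^{j,k}$, which is the key technical ingredient. The projections $\mathcal{P}_{j,k}$ do not interfere with these multiplier arguments since they are scalar in the parameter $\lambda$ and act on the target Banach space $E$ only, and the condition $\tfrac{1}{s_{j,k,l}} + \tfrac{n-1}{2m\, r_{j,k,l}} < \kappa_{j,k} + \tfrac{k-l}{2m}$ in {\bf (SB1)} is precisely what is needed so that pointwise multiplication by $b_{j,k,\vec \beta}$ is bounded on $F_{pq}^{\kappa_{j,k}}(J; \LL_q(\partial \Omega;E))$ via the usual Gagliardo--Nirenberg/Hölder arguments.
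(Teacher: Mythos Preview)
Your proposal is correct and aligns with the paper's approach: the paper does not spell out a proof of this theorem but simply states that the methods of \cite{DeHiPr07} (in particular Theorem~2.3 there) translate more or less directly to the combined-type boundary operators, the only adjustment being that the optimal regularity spaces must be taken componentwise in $k$ via the projections $\mathcal{P}_{j,k}$. Your three-step scheme (trace-theoretic necessity; full-space reduction followed by the half-space solution operators $S_\lambda^{j,k}$, $R_\lambda^{j,k}$ and their $\mathcal{R}$-bounds; perturbation and localisation as in Theorems~\ref{DHP03:Thm_7.11} and~\ref{DHP03:Thm_8.2}) is exactly this translation, and your identification of the Triebel--Lizorkin trace characterisation and the multiplier/Gagliardo--Nirenberg ingredients matches what the paper names as the relevant tools.
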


\end{document}